\newcommand{\mf}[1]{\ensuremath{\mathfrak{#1}}}
\newcommand{\G}{\mathcal G}
\newcommand{\calL}{\mathcal L}
\newcommand{\M}{\mf{M}}
\newcommand{\calO}{\mathcal O}
\newcommand{\calS}{\mathcal S}
\def\ran{\mathop{\rm ran}\nolimits}
\def\Min{\mathop{\rm Min}\nolimits}
\newcommand{\ba}{\bar{a}}
\newcommand{\be}{\bar{e}}
\newcommand{\RM}{\mathrm {RM}}
\newcommand{\acl}{\mathrm {acl}}
\newcommand{\dcl}{\mathrm {dcl}}
\newcommand{\eq}{\mathrm {eq}}
\newcommand{\red}{\mathrm {red}}
\newcommand{\RV}{\mathrm {RV}}
\newcommand{\Th}{\mathrm {Th}}
\newcommand{\tp}{\mathrm {tp}}
\newcommand{\Div}{\mathrm {div}}
\newcommand{\con}{\mathrm {con}}
\newcommand{\an}{\mathrm {an}}
\newcommand{\im}{\mathrm{Im}}
\def\lceil{\ulcorner}
\def\rceil{\urcorner}
\newcommand{\Uu}{\mathcal U}
\newcommand{\Oo}{\mathcal O}
\begin{document}

\title[Lack of EI with analytic functions]{Unexpected imaginaries in valued fields with analytic structure}
\author{Deirdre Haskell}
\address{Department of Mathematics and Statistics\\
	  McMaster University \\
	  1280 Main St W. \\
         Hamilton ON L8S 4K1, Canada} 
\email{haskell@math.mcmaster.ca}
\author{Ehud Hrushovski}
\address{Institute of Mathematics\\
	Hebrew University\\
	Jerusalem 91904, Israel}
\email{ehud@math.huji.ac.il}
\author{Dugald Macpherson}
\address{School of Mathematics\\
	University of Leeds\\
	 Leeds LS2 9JT, UK}
\email{pmthdm@maths.leeds.ac.uk}

\newtheorem{defi}{Definition}[section]
\newtheorem{theorem}[defi]{Theorem}
\newtheorem{definition}[defi]{Definition}
\newtheorem{lemma}[defi]{Lemma}
\newtheorem{proposition}[defi]{Proposition}
\newtheorem{conjecture}[defi]{Conjecture}
\newtheorem{corollary}[defi]{Corollary}
\newtheorem{problem}[defi]{Problem}
\newtheorem{remark}[defi]{Remark}
\newtheorem{example}[defi]{Example}
\newtheorem{question}[defi]{Question}
\newtheorem{convention}[defi]{Conventions}
\newtheorem{fact}[defi]{Fact}
\newtheorem{notation}[defi]{Notation}

%\urladdr{http://webaddress}

\thanks{This work arose from extensive efforts to prove elimination of imaginaries in the algebraically closed 
and real closed cases. We express our appreciation for the efforts of David Lippel and Tim Mellor, whose work contributed 
to our understanding of the problems involved. We also thank Leonard Lipshitz for useful comments. The authors acknowledge the financial support of NSERC, the Israel Science Foundation (1048/07) and the EPSRC (EP/F068751/1) respectively.} 

%keywords{keywords}
%\subjclass[2000]{Primary: subject; Secondary: subject}
\date{\today}

\def\Ind#1#2{#1\setbox0=\hbox{$#1x$}\kern\wd0\hbox to 0pt{\hss$#1\mid$\hss}
\lower.9\ht0\hbox to 0pt{\hss$#1\smile$\hss}\kern\wd0}
\def\dnf{\mathop{\mathpalette\Ind{}}}
\def\Notind#1#2{#1\setbox0=\hbox{$#1x$}\kern\wd0\hbox to 0pt{\mathchardef
\nn=12854\hss$#1\nn$\kern1.4\wd0\hss}\hbox to
0pt{\hss$#1\mid$\hss}\lower.9\ht0 \hbox to
0pt{\hss$#1\smile$\hss}\kern\wd0}
\def\df{\mathop{\mathpalette\Notind{}}}

\begin{abstract}
We give an example of an imaginary defined in certain valued fields with  analytic structure which cannot
be coded in the `geometric' sorts which suffice to code all imaginaries in the corresponding algebraic setting.  
\end{abstract}
\maketitle

\section{Introduction}

The  work of \cite{DvdD} on quantifier elimination for valued fields with  analytic functions illustrated
the power of Weierstrass preparation and gave rise to the intuition that restricted analytic functions do not significantly
increase the collection of definable sets beyond those which are already given by the algebraic structure. This is
certainly true for sets in one variable as, by the work of \cite{LR} in the algebraically closed case, of \cite{DHM} in
the $p$-adically closed case and of \cite{DL} in the real closed case, the theory of the valued field with restricted analytic 
functions in the appropriate language is respectively $C$-minimal, $P$-minimal or weakly o-minimal. This intuition gave rise
to a belief that the theory of a valued field with restricted analytic functions should  eliminate imaginaries to the same
`geometric sorts' which suffice to eliminate imaginaries in the  algebraic situation. In this paper, we show that this belief
is false. We give an example of an imaginary which arises in the analytic setting and which cannot be coded in the 
geometric sorts. The example has versions in each of the above three settings.

Let $K$ be a field, and $v:K\to\Gamma\cup\{\infty\}$ a valuation map. 
  Let $\calL=(+,-,.,0,1,\Div)$ be the 
language of valued rings, where $\Div$ is a binary relation symbol interpreted on $K$ by putting $\Div(x,y)$ whenever $v(x)\leq v(y)$. 
This is a one-sorted language. For an arbitrary valued field $F$, we shall write $\Gamma(F)$, $\calO(F)$, $\mf{M}(F)$, and $k(F)$, 
for, respectively, the value group, valuation ring, maximal ideal, and residue field of $F$.

We also consider the multi-sorted language $\calL_{\G}$ in which ACVF (the theory of algebraically closed valued fields) was proved in 
\cite{HHM} to have elimination of imaginaries. This has sorts $\Gamma$ (for the value group) and $k$ (for the residue field), and also, 
for each $n>0$, a sort $S_n$ and  a sort $T_n$. We view these as sorts for arbitrary valued fields --  $\calL_{\G}$ is a general 
sorted language for valued fields. The members of $S_n$ are codes for rank $n$ lattices for 
the valuation ring; that is, given a valued field $F$ with valuation ring $\calO=\calO(F)$, the members of $S_n(F)$ are codes for free rank 
$n$  $\calO$-submodules of $F^n$. Equivalently, as ${\rm GL}_n(F)$ acts transitively on the space of such lattices and the stabiliser of 
the lattice $\calO^n$ is ${\rm GL}_n(\calO)$, we may view the elements of $S_n(F)$ as codes for left cosets of ${\rm GL}_n(\calO)$ 
in ${\rm GL}_n(F)$.
If $s\in S_n(F)$ codes the lattice $\Lambda$, then $\Lambda/\mf{M}\Lambda$ has the structure of an $n$-dimensional vector space 
over $k(F)$, and the set $T_n$ consists of pairs $(s,t)$ where $s\in S_n(F)$ codes some $\Lambda$ as above, and $t$ codes an 
element of $\Lambda/\mf{M}\Lambda$. Thus there is a canonical surjection $\pi_n:T_n(F)\to S_n(F)$ such that each fibre is a set of 
codes for elements of an $n$-dimensional $k(F)$-space.
The sort $\Gamma$ is redundant (but included to follow the conventions of \cite{HHM}), since it is naturally identified with $S_1$, 
identifying $\gamma\in \Gamma$ with the lattice $\gamma\calO(F):=\{x\in F:v(x)\geq \gamma\}$. Likewise, the sort $k$ is 
redundant. 

We also often refer to the sort
$RV$. For the valued field $F$, $RV(F)$ consists of codes for elements of
 the set $F^*/1+\mf{M}(F)$. However, $RV$  is not formally a sort for $\calL_{\G}$. One can identify $RV(F)$ with an 
 $\emptyset$-definable subset of $T_1(F)$: the element  $b(1+\mf{M})$ in $RV$ is identified with the open ball $b+b\mf{M}$ 
 which is an element of $b\calO/b\mf{M}$.
 There is a natural map $RV\to \Gamma$, also denoted by $v$.

We first describe the context of algebraically closed valued fields.
Let $K_0$ be a complete algebraically closed valued field of characteristic zero, with value group $\Gamma_0$, value map $v:K_0\to \Gamma_0$, valuation ring $\calO_0$, maximal ideal
$\mf{M}_0$, residue field $k_0$.
We consider the rings of separated power series over $K_0$, as introduced by  Lipshitz in \cite{L}. We shall not here describe the full setting, 
but for any $n,m\geq 0$ there is a ring $S_{m,n}$ of power series  in variables $x=(x_1,\ldots,x_m)$ ranging through $\calO_0^m$ and 
$\rho=(\rho_1,\ldots,\rho_n)$ ranging through $\mf{M}_0^n$. In the language $\calL^{\an}_D$ there is a function symbol for each element 
of this ring. In the standard model $K_0$ these function symbols are interpreted by the corresponding power series functions on 
$\calO_0^m \times \mf{M}_0^n$ (where they converge), and they  take value 0 on any $(a,b)$ where $a=(a_1,\ldots,a_m)$, 
$b=(b_1,\ldots,b_n)$, and
$(a,b)\not\in \calO_0^m\times \mf{M}_0^n$. The language also has two binary functions symbols $D_0,D_1$ for truncated division with range $\calO_0$ and $\mf{M}_0$ respectively. Let 
$T^{\an}_D$ be the theory of the valued field in the language $\calL^{\an}_D$, with the above function symbols interpreted in the natural 
way. Lipshitz proved in \cite[Theorem 3.8.2]{L} that this theory, parsed in a three-sorted language with sorts for the valuation ring, the maximal ideal and the value group, has quantifier elimination. A version with two sorts (the field and the value group) is stated in \cite{CL} (Theorem 4.5.15).

With $F$  a model of $T^{\an}_D$, we may extend the language $\calL^{\an}_D$ to
a language $\calL^{\an}_{D,\G}$ by adjoining the sorts $k,\Gamma,S_n,T_n$ (for $n\geq 1$)
 from $F^{\eq}$, and expand $F$ correspondingly. Let  $T^{\an}_{D,\G}$ be the resulting multi-sorted theory. We prove

\begin{theorem} \label{theorem1}
The theory $T^{\an}_{D,\G}$ does not have elimination of imaginaries.
\end{theorem}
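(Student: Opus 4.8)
The plan is to pinpoint a single imaginary $e$ in a model of $T^{\an}_{D,\G}$ and to show that it is not interdefinable with any tuple from the geometric sorts. It is worth isolating first exactly what has to fail. Writing $e=\lceil X\rceil$ for the code of an $\emptyset$-definable equivalence class $X$, one always has $e\in\dcl^{\eq}(\mathcal{G})$, where $\mathcal{G}$ denotes the geometric sorts of the ambient model, simply because a field tuple representing a point of $X$ already lies in $\mathcal{G}$; so what must fail is the reverse half of interdefinability. Concretely, set $G_e=\mathcal{G}\cap\dcl^{\eq}(e)$, the geometric part of $\dcl^{\eq}(e)$. Elimination of imaginaries for $T^{\an}_{D,\G}$ is equivalent to: every such $e$ satisfies $e\in\dcl^{\eq}(G_e)$. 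Hence to prove the theorem it suffices to exhibit one $e$ together with an automorphism of a sufficiently saturated model fixing $G_e$ pointwise but moving $e$. This splits the task into (i) finding the right $e$, and (ii) understanding $\dcl^{\eq}$ and $\acl^{\eq}$ in $T^{\an}_D$ well enough to build the automorphism.

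For (i) I would look for $e$ among imaginaries whose very definition requires the analytic function symbols. The natural place is an $\emptyset$-definable family $\{Y_a\}_a$ of subsets of a product of home sorts, each $Y_a$ cut out using analytic functions --- for instance a coset of an analytically defined subgroup (such as $\exp(\Lambda)$ for $\Lambda$ a lattice, or a subgroup of $1+\mf{M}$ defined via a restricted logarithm), an orbit of a definable analytic flow on a ball, or a fibre of a definable analytic covering of an annulus --- and take $e=\lceil Y_a\rceil$ for generic $a$. The decisive point of the construction has to be that, although $\lceil Y_a\rceil$ lies in $\dcl^{\eq}$ of the field element $a$, the data it determines \emph{inside} the geometric sorts is strictly weaker than $e$ itself: the sorts $\Gamma$, $k$, $S_n$, $T_n$ only record the $C$-minimal (respectively $P$-minimal, weakly o-minimal) skeleton of the structure --- balls and their radii in $\Gamma$, residue points in $k$, lattices and residue torsors --- and one wants $Y_a$ to carry ``analytic'' information transverse to all of that.

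For (ii) the essential input is Lipshitz's quantifier elimination for $T^{\an}_D$ recalled above: types of tuples are governed by their quantifier-free $\calL^{\an}_D$-data, i.e.\ by the values of the analytic function symbols, so one can compute $G_e$ explicitly and, more importantly, perform a back-and-forth over $G_e$. The automorphism fixing $G_e$ and moving $e$ should be obtained by amalgamating, over the common geometric data of $e$, two copies of the configuration defining $Y_a$ that differ only in the ``hidden'' analytic coordinate, and then checking via quantifier elimination that the amalgam is consistent --- that no relation among the analytic functions forces the two copies to coincide. Once this is carried out in the algebraically closed setting, the same construction and verification transfer to the $p$-adic analytic and real closed analytic cases, using the respective quantifier elimination results in place of Lipshitz's.

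The main obstacle is precisely this consistency check. Analytic functions are extremely rigid: a convergent power series is determined by comparatively little data, and the restriction of an analytic function to a ball or an annulus is determined by its restriction to any subball of positive radius, so naively defined equivalence relations tend to collapse either to equality or to something already coded in $K$ by symmetric functions. All of the difficulty therefore sits in arranging $Y_a$ so that the analytic coordinate distinguishing $e$ from its conjugate is genuinely free over the geometric sorts, while $\emptyset$-definability of the family and the rigidity of the analytic structure are both respected, and in verifying that the back-and-forth over $G_e$ actually closes.
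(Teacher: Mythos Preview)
Your proposal is a reasonable meta-strategy, but it is not a proof: you never choose the imaginary, and you explicitly leave the ``consistency check'' --- which you correctly identify as the entire difficulty --- undone. Listing several candidate families (cosets of $\exp(\Lambda)$, orbits of analytic flows, fibres of analytic coverings) and saying one should amalgamate two copies over $G_e$ via Lipshitz quantifier elimination does not establish that any particular amalgamation is consistent. This is a genuine gap.

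The paper's proof is quite different in execution from your outline. The imaginary is completely explicit: for $w\in\calO/\beta\mf{M}$ generic and $r\in RV$ with $v(r)$ infinite over the base, one takes $h_{a,b}:A_w\to B_r$, $x\mapsto b\exp(m^{-1}(x-a))$, an affine homomorphism with homogeneous component the (scaled) exponential; the code $\lceil h\rceil$ is the uneliminable imaginary. The argument that it is not coded does \emph{not} proceed by a direct back-and-forth over $G_e$. Instead, one first proves a structural lemma (Lemma~\ref{finiteimage}): any definable map from a geometric sort other than $VF$ into $K$ has finite image. This forces any putative geometric code for $\lceil h\rceil$ to reduce, modulo $\acl$, to a \emph{single} field element $e$ (Claim~1). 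The contradiction then comes from a weight-one phenomenon (Lemma~\ref{weightone}): a single field element cannot simultaneously create a new residue-field element and a new value-group element over the base, combined with the orthogonality of $k$ and $\Gamma$ (Proposition~\ref{orthog}). A case split on whether $\bar w\in\acl(M,e)$ then shows either $w\notin\acl(M,e,r)$ or $r\notin\acl(M,e,w)$, and in each case one finds a point of $A_w$ or $B_r$ that should be algebraic over the code but is not. None of this dimension/orthogonality machinery appears in your sketch, and it is what replaces the bare-hands back-and-forth you propose.
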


The basic idea of the proof is to consider the graph of a restriction of the exponential function, which is definable in $\calL^{\an}$. The domain and range are definable in $\calL$ and so coded in $\calL_G$, but the exponential map is not definable in $\calL_\G$. Its graph is a group, and we show that a generic torsor of this group is not coded in $\G$.

We next consider real closed valued fields with analytic structure. A {\em real closed valued field} $F$ is a real closed field equipped with a valuation arising from a proper non-trivial convex valuation ring. It may be viewed in the language $\calL_{<}=\calL\cup\{<\}$, where $<$ is a binary relation symbol interpreted by the ordering. The theory of real closed valued fields is complete, and has quantifier elimination, by
\cite{CD}. Furthermore, by \cite{Di}, its theory  is {\em weakly o-minimal}: in all models of the theory, every definable subset of the field is a finite union of convex sets, not necessarily with endpoints in $F\cup\{\pm\infty\}$. If $\calL_{<,\G}$ is the multi-sorted language with sorts $\G$ as above, then the theory of the expansion of $F$ to $\calL_{<,\G}$ has elimination of imaginaries, by \cite{M}.

Now consider 
any  o-minimal expansion $\bar{{\mathbb R}}$ of the real  field ${\mathbb R}$, in a language $\bar{\calL}$,
 and let $T=\Th(\bar{{\mathbb R}})$. 
Assume that exponentiation restricted to $[0,1]$ is definable in $\bar{{\mathbb R}}$.
Assume also that $T$ is {\em polynomially bounded}, that is, for every definable partial function 
${\mathbb R}\to {\mathbb R}$, there are $a\in {\mathbb R}$ and $d\in {\mathbb N}$ such that 
$f(x)\leq x^d$ for all $x>a$.   By \cite{MILLER},
 this is equivalent to assuming that unrestricted exponentiation is not definable in 
$\bar{{\mathbb R}}$.  A familiar example of 
such a structure $\bar{{\mathbb R}}$ is ${\mathbb R}_{{\rm an}}$, the expansion 
of the real field by all restricted analytic functions -- see for example \cite{DvdD}. Let $\bar{F}\models T$ be non-archimedean.
Let 
$$V:=\{x\in F: \mbox{~for some~} n\in {\mathbb N}^{>0} (|x|<n)\},
$$
 the subring of $\bar{F}$ consisting of finite elements. Then $V$ is a convex valuation ring of $\bar{F}$ with maximal ideal 
 $\mf{M}$, the ideal of infinitesimals. 
Observe that $f(V)\subseteq V$ for every continuous $\emptyset$-definable function $f:V\to {\mathbb R}$.
Following \cite{DL}, let $T_{\con}=\Th((\bar{F},V))$ in a language $\bar{\calL}_{\con}$ obtained from $\bar{\calL}$ by adjoining a unary 
predicate $P$ interpreted by $V$.

It is well-known (see e.g. \cite[(1.2) p. 94]{Dries}) that any o-minimal expansion of an ordered field has elimination of imaginaries. Likewise, as noted above, RCVF has elimination of imaginaries in the multi-sorted language
$\calL_{<,\G}$, and we had hoped that these two results could be combined.
However, let $\bar{\calL}_{\con,\G}$ be the extension of $\bar{\calL}_{\con}$ obtained by adding the sorts $S_n$ and $T_n$ (as well as $k$, $\Gamma$) of $\calL_{<,\G}$. 
Let $T_{\con,\G}$ be the theory of the natural expansion of $(\bar{F},V)\models T_{\con}$ to the language
$\bar{\calL}_{\con,\G}$. We prove

\begin{theorem} \label{theorem2}

The  theory $T_{\con,\G}$ does not have elimination of imaginaries.
\end{theorem}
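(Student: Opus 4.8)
The strategy is to run the same argument that proves Theorem~\ref{theorem1}, transported to the real closed valued field setting, using the restriction of exponentiation to $[0,1]$ in place of the restricted exponential available in $\calL^{\an}$. The key structural point is that $\exp$ restricted to $V$ takes values in $V$ (since $\exp$ restricted to $[0,1]$ is $\emptyset$-definable and continuous, hence maps $V$ into $V$ by the observation in the excerpt), and in fact, using the functional equation $\exp(x+y)=\exp(x)\exp(y)$ together with the polynomial boundedness assumption, one can arrange a $\emptyset$-definable group homomorphism from an additive subgroup of $\mf{M}$ (or of $V$) into the multiplicative group of units $1+\mf{M}$ whose graph $H$ is definable in $\bar{\calL}_{\con}$ but \emph{not} in $\calL_{<,\G}$. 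Concretely, the domain and codomain of this $\exp$-restriction are each $\emptyset$-definable in $\calL_{<}$ alone, hence coded in the geometric sorts $\G$ (this uses \cite{M}, elimination of imaginaries for RCVF in $\calL_{<,\G}$), but the graph of $\exp$ itself is not coded in $\G$.

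The core is then to show that a generic torsor of the group $H$ fails to have a code in the $\G$-sorts. First I would set up a sufficiently saturated model $\mathcal{M}\models T_{\con,\G}$ and take a generic element, producing a coset $c+H$ (additively written on the $\mf{M}$ side, multiplicatively on the $1+\mf{M}$ side) which is an imaginary element $e$. One shows that the canonical parameter of $e$ is interdefinable with neither an element of the field sort nor any tuple from $\G$: an automorphism argument is the natural tool. The polynomial boundedness hypothesis is what guarantees that $\exp$ genuinely escapes $\calL_{<,\G}$-definability — by \cite{MILLER} it is equivalent to non-definability of unrestricted exponentiation, and one leverages that the $\G$-structure, being governed by the algebraic (RCVF) part, only ``sees'' polynomially-bounded behaviour, so no $\G$-formula can pin down the graph of $\exp$ on the relevant domain. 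I would then exhibit two elements of the relevant torsor that are conjugate over $\G$ (and over the field sort) but such that no single finite tuple over $\G$ (or over the field) is fixed by exactly the automorphisms fixing the torsor, contradicting coding in $\G$.

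The reduction to the algebraically closed case of Theorem~\ref{theorem1} should be made precise as follows: passing to an elementary extension, one embeds the relevant configuration into (or compares it with) an algebraically closed valued field with separated analytic structure, where the analogous non-coding statement has already been established; alternatively — and this is probably cleaner — one redoes the automorphism computation directly in the o-minimal $T_{\con}$ setting, using weak o-minimality of RCVF and the description of definable sets in one variable. The role of weak o-minimality (via \cite{Di}) and of \cite{DL} is to control definable sets over the field sort so that the torsor genuinely requires the $\exp$-graph to describe.

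The main obstacle I anticipate is the final non-coding step: showing rigorously that the generic $H$-torsor has no $\G$-code. One must carefully identify the group $H$ as a definable group that is ``new'' relative to $\calL_{<,\G}$ — i.e., its graph projects to $\calL_{<}$-definable (hence $\G$-coded) sets in each coordinate, but $H$ is not itself $\G$-coded — and then transfer this from the group to a generic torsor. The torsor-versus-group distinction is essential: the group $H$, being a definable subgroup, might conceivably be coded even when a generic coset is not, so one needs an argument (again automorphism-theoretic, exploiting that the stabilizer of the torsor in the relevant automorphism group acts without a fixed point on $\G$) showing the coset's canonical parameter lies strictly outside $\dcl^{\eq}$ of the field and of $\G$. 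Making the automorphism group large enough — which requires building models with enough symmetry over the parameters defining $\exp$'s domain — is where the technical care concentrates.
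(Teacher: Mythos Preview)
Your proposal has the right shape at the coarsest level --- construct an imaginary from a torsor of the graph of $\exp$ and show it is not coded in $\G$ --- but it is missing essentially all of the content of the argument, and in one place it points in the wrong direction.

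First, the role of polynomial boundedness. You invoke it to argue that ``$\G$ only sees polynomially-bounded behaviour, so no $\G$-formula can pin down the graph of $\exp$''. This is not how the hypothesis is used. The paper needs polynomial boundedness because, by van den Dries's work on $T$-convexity, it forces the value group $\Gamma$ (with its induced structure) to be an ordered vector space over an archimedean ordered field, and in particular stably embedded with piecewise-linear definable self-maps. This structural fact about $\Gamma$ is what drives the contradiction in Case~2 of the proof (see below); without it the argument collapses. Your framing of polynomial boundedness as a non-definability input is not what is going on.

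Second, the actual mechanism of non-coding is not an abstract automorphism/stabiliser argument of the kind you sketch. The paper constructs the specific affine homomorphism $h_{a,b}:A_w\to B_r$ exactly as in Section~2, assumes a $\G$-code $(w,r,\bar e)$ exists, and then --- using the analogue of Lemma~\ref{finiteimage} and the exchange property (which must be re-established in $T_{\con}$, via quantifier elimination for $T_{\con}$ and o-minimality of $T$) --- reduces to a single field element $e$. The heart of the proof is then a dichotomy: either $\bar w\notin\acl(M,e)$ (Case~1) or $\bar w\in\acl(M,e)$ (Case~2). Case~1 is handled by an orthogonality argument between the residue field and $RV$, adapted from $C$-minimality to weak o-minimality. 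Case~2 is where the real work lies: one must show $\gamma\notin\dcl(M,e,w)$, and this requires a delicate analysis of how a putative definable function $f:W\to\Gamma$ with $f(w)=\gamma$ would interact with the ball structure, repeatedly invoking o-minimality of both $k$ and $\Gamma$, their orthogonality, and the linearity of definable functions on $\Gamma$. None of this machinery --- the weight-one lemma (either $k$ or $\Gamma$ grows over $e$ but not both), orthogonality of $k$ and $\Gamma$, the closed-generic lemma --- appears in your outline, and these are not routine; they are the substance of the proof.

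Finally, your suggestion to ``embed the configuration into an algebraically closed valued field with analytic structure'' and import Theorem~\ref{theorem1} does not work as stated: there is no obvious interpretation of $T_{\con}$ in $T^{\an}_D$ that would transfer non-coding, and the paper does not attempt this. It redoes the argument directly in the weakly o-minimal setting, replacing each $C$-minimality step by its weak o-minimal analogue and supplying the $T$-convexity inputs (Lemmas~\ref{strongmin2}--\ref{weightone2}) where needed.
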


Finally, we consider the $p$-adic setting. Here, the structure of interest is ${\mathbb Q}_p^{\an}$ in the language $\calL^{\an}_{p,D}$, in the context of  \cite{DvdD}. The $p$-adic field ${\mathbb Q}_p$, equipped with the $p$-adic valuation $v_p$,  is considered in 
Macintyre's language $\calL_p$ with unary predicates $P_n$ (for $n>1$), where $P_n$ is interpreted by the set of $n^{th}$ powers in 
${\mathbb Q}_p$. The language $\calL^{\an}_{p,D}$ has a binary predicate $D$ for (truncated) division, and a function symbol for each 
restricted analytic function
${\mathbb Q}_p^m \to {\mathbb Q}_p$ (for $m\geq 0$) which is defined by a convergent power series on ${\mathbb Z}_p^m$ (coefficients tending in valuation to infinity), and takes the  
value 0 off ${\mathbb Z}_p^m$. 

Let $\calL_{p,\G}$ be the extension of $\calL_p$ with sorts $S_n$ for each $n>0$; the $T_n$ are not needed since elements of the $T_n$ 
are coded in the other sorts, as the value group is discrete. By \cite{HM}, the theory of ${\mathbb Q}_p$ has elimination of imaginaries in 
the (semi-algebraic) language $\calL_{p,\G}$. This result was used in \cite{HM} to prove rationality results for certain Poincar\'e series for 
groups. The $p$-adic analytic quantifier elimination was used to prove rationality of Poincar\'e series for compact $p$-adic analytic groups 
by du Sautoy in \cite{S}. The hope was to extend these results using analytic elimination of imaginaries. However, we have the following
theorem.  Let $\calL_{p,\G}^{\an}$ be the corresponding extension of $\calL_{p,\G}$ by the restricted analytic functions. We let 
$T^{\an}_{p,\G}$ be the theory of ${\mathbb Q}_p^{\an}$ in the subanalytic language
$\calL_{p,\G}^{\an}$. 

\begin{theorem}\label{theorem3}
The theory $T_{p,\G}^{\an}$ does not have elimination of imaginaries.
\end{theorem}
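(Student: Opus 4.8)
The plan is to mirror the proof of Theorem~\ref{theorem1}, replacing the exponential used there by a $p$-adic one. Fix a prime $p$ and let $E$ be the restricted analytic function on $\mathbb Q_p$ given by $E(x)=\exp_p(px)$ for $p$ odd, by $E(x)=\exp_2(4x)$ for $p=2$, and by $E(x)=0$ off $\mathbb Z_p$; the rescaling is there to secure convergence of the exponential series on all of $\mathbb Z_p$, so that $E$ is a legitimate function symbol of $\calL^{\an}_{p,D}$. In every model of $T^{\an}_{p,\G}$ the map $E$ restricts to an isomorphism of topological groups from $(\calO,+)$ onto a ball $B$ of $(K^{\times},\cdot)$, and $E$ is \emph{not} definable in $\calL_{p,\G}$ (an analytic semi-algebraic function on a $p$-adic ball is algebraic over the field of rational functions, whereas the exponential is transcendental). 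Put $G:=\{(x,E(x)):x\in\calO\}$, a $\emptyset$-definable subgroup of $\mathbb G_a\times\mathbb G_m$; its two projections, $\calO$ and $B$, are semi-algebraic and so, by \cite{HM}, coded in $\calL_{p,\G}$, while $G$ itself is not. I would then work in a saturated $\mathfrak M\models T^{\an}_{p,\G}$, fix a small model $M_0\prec\mathfrak M$, pick $a\in K$ with $v(a)<0$ and $b\in K^{\times}$ with $v(b)=0$ such that $a\notin\acl(M_0)$ and $b\notin\acl(M_0a)$, and set $C:=(a,b)\cdot G=\{(a+x,\,bE(x)):x\in\calO\}$, a coset of $G$, with canonical parameter $e:=\lceil C\rceil\in\dcl^{\eq}(a,b)$. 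The hypothesis $v(a)<0$ is what makes $C$ a genuine torsor: if $a\in\calO$ then $a+\calO=\calO$, $C$ is just the graph of $z\mapsto bE(-a)E(z)$ on $\calO$, and $e$ is interdefinable with the field element $bE(-a)$; for a \emph{generic} $(a,b)$ the torsor $C$ projects onto the generic coset $a+\calO$, a ball with no distinguished point.

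Suppose, for contradiction, that $T^{\an}_{p,\G}$ eliminates imaginaries. Then $e$ is interdefinable with a tuple $\bar c$ from the geometric sorts; since the value group is discrete the $T_n$ are redundant, so $\bar c$ may be taken from $K$, $\Gamma\cong S_1$, $k$ and the $S_n$. Hence $\mathrm{Aut}(\mathfrak M/\bar c)=\mathrm{Aut}(\mathfrak M/e)$, and the right-hand group equals $\{\sigma:\sigma(a)\in a+\calO\ \text{and}\ \sigma(b)=bE(\sigma(a)-a)\}$, because $\sigma$ stabilises $C$ setwise iff $\sigma(a,b)\in(a,b)G$. The contradiction will come from showing that $\mathrm{Aut}(\mathfrak M/\bar c)$ is too big to be contained in this stabiliser, i.e.\ that some automorphism fixing $\bar c$ (hence $e$, hence $C$ setwise) moves $(a,b)$ to a point of $\mathbb G_a\times\mathbb G_m$ lying outside $C$. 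To produce such an automorphism one first has to pin down $\G\cap\dcl^{\eq}(a,b)$ — here the semi-algebraic elimination of imaginaries of $\mathbb Q_p$ down to $\G$ (\cite{HM}) is used to see that the only geometric data left invariant by the $G$-translations $(a,b)\mapsto(a+x,bE(x))$, $x\in\calO$, are codes for the two balls $a+\calO$ and $B$, so that $\bar c$ is interdefinable with such a code and in particular cannot record the transcendental germ $z\mapsto bE(z-a)$ that $C$ does. One then invokes $P$-minimality of $\mathbb Q_p^{\an}$ (\cite{DHM}): $C$ is a one-dimensional curve, whereas $\tp(a,b/\bar c)$, being the type of an independent generic pair over a code for two balls, is two-dimensional, so it has a realisation off $C$; lifting that realisation to an automorphism of $\mathfrak M$ over $\bar c$ gives the required contradiction.

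A cleaner-looking alternative is to reduce outright to Theorem~\ref{theorem1}: the $p$-adic restricted analytic functions extend to separated power series on an algebraically closed valued-field extension $\mathfrak M\subseteq\mathfrak M^{\mathrm{alg}}\models T^{\an}_{D,\G}$, and over $\mathfrak M^{\mathrm{alg}}$ the coset $C$ generates precisely a non-coded imaginary as in the proof of Theorem~\ref{theorem1}; a geometric code for $e$ over $\mathfrak M$ would then yield a geometric code for the corresponding imaginary over $\mathfrak M^{\mathrm{alg}}$. In either route the main obstacle is this last step: excluding a code for $e$ in the lattice sorts $S_n$, that is, showing that no semi-algebraic datum — nor any $\calL_{p,\G}$-definable datum over a geometric parameter — can pin down the exponential germ carried by a generic torsor of $G$. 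Via the reduction this is exactly the content of Theorem~\ref{theorem1}, but one then owes a justification that the coding relation transfers between $\mathfrak M$ and $\mathfrak M^{\mathrm{alg}}$ even though $\mathfrak M$ is not an elementary substructure of $\mathfrak M^{\mathrm{alg}}$; this rests on quantifier elimination for $T^{\an}_{p,D}$ and $T^{\an}_D$ (\cite{DvdD}, \cite{L}), which lets one express the relevant sets and the coding relation by quantifier-free formulas that persist. Granting this rigidity, the remaining steps are routine, the only genuinely $p$-adic adjustments being the rescaling in the definition of $E$, the redundancy of the sorts $T_n$, and the appeal to $P$-minimality in place of $C$-minimality.
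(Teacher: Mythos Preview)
Your proposal identifies the right kind of object --- a generic torsor of the graph of a rescaled $p$-adic exponential --- but the argument has a genuine gap at its central step.

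The crucial claim is that the putative geometric code $\bar c$ for $e=\lceil C\rceil$ is interdefinable with codes for the two balls $a+\calO$ and $bB$. Your justification is that $\bar c$ must be invariant under the automorphisms sending $(a,b)$ to $(a+x,bE(x))$, and that the semi-algebraic elimination of imaginaries then forces $\bar c$ to code only semi-algebraic data, namely the two balls. But this does not follow. The invariance of $\bar c$ under those automorphisms is exactly the statement that $\bar c\in\dcl^{\eq}(e)$, which you already knew; it does not by itself rule out that $\bar c$ contains, say, lattice codes in some $S_n$ carrying more information than the two projections. The semi-algebraic EI of \cite{HM} says nothing here, since $C$ is not semi-algebraic. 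Your dimension argument at the end (``$\tp(a,b/\bar c)$ is two-dimensional'') presupposes precisely this unproven claim, so the argument is circular.

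The paper's proof proceeds quite differently and does real work at this point. Rather than trying to identify $\bar c$ outright, it first uses the analogue of Lemma~\ref{finiteimage} to show that, modulo algebraic closure, $\bar c$ is interalgebraic over $M\cup\{w,r\}$ with a \emph{single field element} $e$ (Claim~1 in the proof of Theorem~\ref{theorem1}, adapted via Proposition~\ref{padicfacts}(iv)). The setup is also different: one works with $w$ generic in $W=\calO/\beta\calO$ for an \emph{infinite} $\beta\in\Gamma(M)$, and with $r\in RV$ whose valuation realises the type at $+\infty$ over $\acl(M,w)$ --- not with $v(a)<0$ and $v(b)=0$ as you propose. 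The heart of the proof is then Lemma~\ref{padicalg}, a delicate $P$-minimality argument (via Lemmas~\ref{Pmin} and \ref{fincover} on definable subsets of $W$ and finite covers of $\Gamma$) showing that either $w\notin\acl(M,e,r)$ or $r\notin\acl(M,e,w)$; each case is then eliminated as in Cases~1 and~2 of Theorem~\ref{theorem1}. This is where the genuine $p$-adic content lies, and your proposal bypasses it.

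Your alternative route --- reducing to Theorem~\ref{theorem1} by passing to an algebraically closed extension --- is not developed enough to assess, but the transfer you need is not a formal consequence of quantifier elimination: a geometric code for $e$ in $\mathfrak M$ lives in the $p$-adic sorts $S_n$, and there is no obvious reason it should yield a geometric code for the corresponding imaginary in $\mathfrak M^{\mathrm{alg}}$, whose $S_n$ are larger. The paper does not attempt this reduction.
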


We give the proof of Theorem~\ref{theorem1} in Section 2. We have included proofs of several intermediate lemmas, possibly of wider interest, which may be known but are hard to find in the literature. The proofs of Theorems~\ref{theorem2} and \ref{theorem3} are similar, and are given in Sections 3 and 4 respectively. Section 5 contains a sketch of an alternative proof of Theorem~\ref{theorem1} in residue characteristic 0 which is perhaps slightly  shorter (though it rests on some of the lemmas in Section 2) but may not generalise so well to the o-minimal and $p$-adic contexts.

\begin{remark}\rm
As mentioned above and referenced later, the theories in Theorems~\ref{theorem1}, \ref{theorem2} and \ref{theorem3} are, respectively, $C$-minimal, weakly o-minimal, and $P$-minimal. It follows immediately that these theories eliminate the quantifier $\exists^\infty$ (in the field sort); that is, for any uniformly definable family of subsets of the field, there is a finite upper bound on the sizes of the finite members. This is used without explicit mention.
\end{remark}

It may be that in these settings, elimination of imaginaries can be obtained by adding a few further clearly described sorts. Let us say that an imaginary set $Y$ is {\em coded} over $A$ in sorts
 $\calS$ if there is an $A$-definable embedding from $Y$ into a product of the sorts in
 $\calS$. In this paper we have shown that in the above three settings, a certain imaginary is not coded 
in the geometric sorts. This is seen most cleanly in Section 5, where is is shown that for $ACVF^{\an}$ in residue characteristic 0, if $E$ is the graph of exponentiation and $G=G_a \times G_m$, then $G(\calO)/E$ is not coded in the geometric sorts.

\begin{question}\rm Is it true that, in the three settings considered in this paper, any imaginary can be coded in one of the form ${\rm GL}_n/H$ for some definable group $H$?
\end{question}

\noindent
{\em Conventions and Notation.}

 In each of the three settings, we use the symbol $\Uu$ for a large saturated model
(without specifying the sorts of $\Uu$). Though we use $VF$ for the home sort, the underlying field of $\Uu$ is denoted by $K$. The valuation 
ring of $K$ is denoted by $\Oo$, its maximal ideal is denoted by $\mf{M}$, the residue field by $k$, and the value group by $\Gamma$. There 
is a potential confusion between viewing $\Gamma$ (or other sort symbols) as a symbol for a sort, or as the value group of $K$, but since 
we never move outside $\Uu$ this should be unproblematic. We often write $x\in VF$, meaning that $x\in VF(\Uu)$, and treat the other sorts similarly. If $M$ is an elementary submodel of $\Uu$, we write $VF(M)$ for the home
 sort of $M$, $\Oo(M)$ for the valuation ring of $M$, and so on.
If $\gamma\in \Gamma$ we put $\gamma\Oo:=\{x\in \Uu: v(x)\geq \gamma\}$ and $\gamma\mf{M}:=\{x\in \Uu:v(x)>\gamma\}$.   Throughout the paper, if $A$ is a subset of a model $M$, 
then $\acl(A)$ denotes the model-theoretic algebraic closure of $A$ in $M$, rather than the field-theoretic algebraic closure. 

Recall that if $G$ is a group, then a {\em torsor~} or {\em principal homogeneous space} for
 $G$ is a set $X$ equipped with a regular (that is, sharply 1-transitive) action of $G$
 on $X$. Let $X,Y$ be torsors of the groups $G,H$ respectively, with the actions of $G$
 on $X$ and $H$ on $Y$ both denoted by $*$. Then an {\em affine homomorphism} $X \to Y$
is a pair $(f,c)$ where $c:X \to Y$ is a function, $f:G \to H$ is a group homomorphism (the {\em homogeneous component} of 
$(f,c)$), and $c(g*x))=f(g)*c(x)$ for all $g\in G$ and $x\in X$. If $(f_1,c)$ and $(f_2,c)$ are both affine homomorphisms $X\to Y$
 then $f_1=f_2$, so we sometimes denote the affine homomorphism $(f,c)$  just by $c$. The imaginary that we exhibit in each
 case is an affine homomorphism whose homogeneous component is essentially exponentiation.
 
 If $v:F \to \Gamma \cup\{\infty\}$ is a valuation on a field $F$, then, for $a\in F$ and $\gamma \in \Gamma\cup\{\infty\}$, an {\em open ball} is a set of the form $B_{>\gamma}(a):=\{x\in F: v(x-a)>\gamma\}$, and a {\em closed ball} has
 form
$B_{\geq \gamma}(a):=\{x: v(x-a)\geq \gamma\}$ (so may be a singleton). We write $B_\gamma(a)$
if we do not wish to specify whether the ball is open or closed.
 
 We view definable sets such as balls both as imaginaries and as sets of (field) elements, viewed in the monster model. For example, a ball $B$ may be viewed as $\{x\in VF:x\in B\}$. When viewed as an imaginary, we often denote it as $\lceil B\rceil$. 
 
 We shall make heavy use of $C$-minimality, (weak) o-minimality, and $P$-minimality, often without detailed explanation. We assume, for example, that the reader can picture what kinds of sets are definable in a $C$-minimal expansion of a valued field. (Formally, they are Boolean combination of balls, and can be described -- canonically -- as finite unions of `Swiss cheeses' in the language of Holly \cite[Theorem 3.26]{holly}.)
 
 If $B$ is a closed ball in some valued field, then the {\em reduction} $\red(B)$ of $B$ is the collection of open sub-balls of $B$ of the same radius. This is a set in parameter-definable bijection with the residue field $k$.
This notation is occasionally extended. If, for example, $B$ has radius $\delta$, $\beta>\delta$, and $W$ is the collection of closed (or open) sub-balls of $B$ of radius $\beta$, then we may view $W$ as a closed ball of radius $\delta$ whose {\em elements} are closed balls of radius $\beta$, so $W$ is a {\em 1-torsor} in the language of \cite[Section 2.3]{HHM}. We may define $\red(W)$ as above, and $\red(W)$ is in $(\lceil B\rceil,\beta)$-definable bijection with $\red(B)$.

\section{The algebraically closed case -- proof of Theorem~\ref{theorem1}.}

As in the introduction, we assume that $K_0$ is a complete algebraically closed valued field of characteristic 0, with $K_0\models T^{\an}_D$, and let $T^{\an}_{D,\G}$ be the extension of $T^{\an}_D$ to the multi-sorted language
$\calL^{\an}_{D,\G}$. 

Recall the notion of a $C$-minimal theory, introduced in \cite{MS} and developed in \cite{HM1}.
A slightly more restricted notion, which also fits the present context, was developed further in \cite{HK}. The complete theory of an expansion of a valued field is {\em $C$-minimal} if, in any model $F$, any (parameter)-definable subset of the field is a Boolean combination of open or closed balls. 

\begin{theorem} \label{QEC-min}
\cite{LR} The theory $T^{\an}_D$  is $C$-minimal.
\end{theorem}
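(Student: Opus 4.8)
The plan is to deduce $C$-minimality from Lipshitz's quantifier elimination for $T^{\an}_D$ in $\calL^{\an}_D$ (recalled in the introduction), together with an analysis of one-variable terms via Weierstrass preparation for separated power series. Fix a model $F \models T^{\an}_D$ and a definable set $X \subseteq VF(F)$, say defined by a formula $\psi(x, \ba)$. By quantifier elimination we may take $\psi$ quantifier-free, hence a finite Boolean combination of atomic formulas of the shape $t(x, \ba) = 0$ and $\Div(t_1(x, \ba), t_2(x, \ba))$, where $t, t_1, t_2$ are $\calL^{\an}_D$-terms. Since finite Boolean combinations of balls form a Boolean algebra, it suffices to produce a finite partition of $VF(F)$ into sets each of which is a Boolean combination of balls, such that on every part each atomic formula occurring in $\psi$ cuts out (relatively, hence globally) a Boolean combination of balls.

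I would argue by induction on term complexity, refining the partition as I go. A term $t(x, \ba)$ is built from the ring operations, the function symbols for elements of the rings $S_{m,n}$, and the truncated division symbols $D_0, D_1$. The analytic input is \emph{Weierstrass division/preparation} for separated power series -- the substance of \cite{L}, and of \cite{LR} for the resulting one-dimensional geometry: given a separated power series $f$ and parameters $\ba$, the one-variable function $x \mapsto f(\ldots, x, \ldots, \ba)$ is, after partitioning the relevant polydisc into finitely many balls and differences of balls, equal on each piece to $u(x)\cdot p(x)$, where $p \in F[x]$ and $u$ is a unit of a ring of separated power series; and a unit of such a ring has constant valuation along its polydisc. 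The conditions specifying that the argument of $f$ lies in the polydisc (off which $f$ takes the value $0$) are $\Div$- and equality-conditions on subterms, so they are subsumed by the induction. Truncated division is handled in parallel: $D_0(a, b) = a/b$ when $b \neq 0$ and $v(b) \leq v(a)$, and $0$ otherwise, with $D_1$ similar for the complementary domain; so after a further refinement (into parts cut out by $\Div$-conditions on subterms, inductively Boolean combinations of balls) each occurrence of $D_i$ is either identically $0$ or a genuine quotient $t_1/t_2$ with non-vanishing denominator, with which the preparation continues. Carrying this out simultaneously for the finitely many subterms of the $t$'s appearing in $\psi$ -- a single finite partition suffices, by the uniform bounds in the preparation theorem or by compactness -- one reaches a finite partition of $VF(F)$ into Boolean combinations of balls on each of which every relevant $t$ equals a rational function over $F$ times a unit of constant valuation.

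On such a part the atomic conditions become $p(x) = 0$, respectively $v(p(x)) \leq v(q(x)) + c$ for a constant $c$ absorbing the (constant) valuations of the units, with $p, q \in F[x]$. Since $F$ is algebraically closed, $p$ and $q$ split into linear factors, so $v(p(x))$ and $v(q(x))$ are, up to additive constants, sums of the quantities $v(x - \gamma)$ over the roots $\gamma$ of $p$ and $q$. A final finite refinement according to which root is valuation-closest to $x$ reduces each such condition to a Boolean combination of balls centred at these roots: this is precisely the (algebraic) $C$-minimality of the theory of algebraically closed valued fields. Assembling over the parts exhibits $X$ as a finite Boolean combination of balls -- indeed as a finite union of Swiss cheeses in the sense of \cite{holly}.

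The main obstacle is the first reduction: organising the iterated Weierstrass preparations for nested analytic terms, together with the interleaved truncated divisions, so that one finite partition serves all subterms at once, and verifying that the resulting units really do have constant valuation along the remaining free variable. This is exactly the content of the Lipshitz--Robinson structure theory for one-dimensional rigid subanalytic sets; everything after it is the elementary polynomial bookkeeping plus the classical valued-field case.
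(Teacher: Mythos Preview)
The paper gives no proof of this theorem at all: it is simply quoted from \cite{LR}, with the citation placed in the theorem heading and no proof environment following. So there is nothing in the paper to compare your argument against.

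That said, your sketch is a fair high-level summary of the strategy that underlies \cite{LR}: quantifier elimination reduces to atomic formulas in one field variable; a term analysis via Weierstrass preparation for separated power series, interleaved with case-splitting for the truncated division symbols $D_0,D_1$, brings each term to the form (polynomial)$\times$(unit of constant valuation) on pieces which are themselves Boolean combinations of balls; and then one is left with the algebraic case, i.e.\ $C$-minimality of ACVF. You correctly identify the genuine difficulty as organising the iterated preparations for nested analytic terms so that a single finite partition works, and you defer that to \cite{L,LR}. Two small caveats worth noting: Lipshitz's quantifier elimination is stated in a sorted language (valuation ring, maximal ideal, value group), so the passage to ``atomic formulas of the shape $t=0$ or $\Div(t_1,t_2)$'' in a one-sorted field language needs a word; and the claim that prepared units have constant valuation on their polydisc is exactly right for units of $S_{m,n}$, but one must check that the preparation produces such units rather than, say, strong units in a larger ring. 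In short, your write-up is an accurate roadmap to the cited result rather than a self-contained proof, which is entirely appropriate given that the paper itself does not attempt one.
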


The next lemma is presumably well-known, and yields that algebraic closure defines a pregeometry on $K$.

\begin{lemma}\label{exchange}
 Algebraic closure has the exchange property in every model of $T^{\an}_D$.
\end{lemma}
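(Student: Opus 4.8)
The plan is to prove exchange for $\acl$ in models of $T^{\an}_D$ by leveraging $C$-minimality (Theorem~\ref{QEC-min}) together with the standard model-theoretic characterization: in a theory where $\acl$ gives a pregeometry, exchange is equivalent to the statement that for any set $A$ and any $a$, if $b\in\acl(Aa)\setminus\acl(A)$ then $a\in\acl(Ab)$. Since $C$-minimality controls definable subsets of the one-dimensional home sort $VF$ as Boolean combinations of balls, the strategy is to reduce the question to a statement about one-variable definable sets and their ``sizes'' measured by balls. Concretely, I would fix $A\subseteq VF(F)$, elements $a,b\in VF(F)$ with $b\in\acl(Aa)$, and aim to show that if $a\notin\acl(Ab)$ then $b\in\acl(A)$.

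The first step is to set up the dichotomy on the locus of $b$ over $Aa$. Let $\ph(x,a)$ be a formula over $A$ with $\ph(x,a)$ having $b$ as a solution and only finitely many solutions, say at most $N$. Consider the definable set $Y=\{(x,y): \ph(x,y)\ \text{has exactly}\ k\ \text{solutions and}\ x\ \text{is one of them}\}$ for the appropriate $k\le N$; this is $A$-definable, contains $(b,a)$, and for each fixed $y$ its fiber $Y_y$ is finite of constant size. The second step is to analyze the projection $\pi:Y\to VF$, $(x,y)\mapsto x$. By $C$-minimality applied to the fibers $\pi^{-1}(x)$ — each a definable subset of $VF$ in the variable $y$ — and by elimination of $\exists^\infty$ (which holds since the theory is $C$-minimal, as noted in the Remark), we get: either for $x=b$ the fiber $\pi^{-1}(b)=\{y:(b,y)\in Y\}$ is finite, in which case $a\in\acl(Ab)$ and we are done with the contrapositive; or this fiber is infinite, hence (being definable over $Ab$) contains a ball, and in fact — since a cofinite-in-the-relevant-ball set of $y$'s maps $b$ into a fixed finite set — we can extract that $b$ lies in a finite set definable over $A$ alone. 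Making this last extraction precise is the crux.

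The main obstacle, and the step I would spend the most care on, is precisely that final extraction: showing that if ``$b$ belongs to an $N$-element set definable over $Aa$'' holds for all $a$ in an infinite (ball-containing) $Ab$-definable set $S$, then $b\in\acl(A)$. The idea is a compactness/counting argument: consider the $A$-definable family $\{F_a := \{x:\ph(x,a)\}\}$; the set $\{a\in S: b\in F_a\}$ is $Ab$-definable and infinite, so contains a ball $B$; now run the standard argument that $\bigcap_{a\in B} F_a$ — or rather, the set of $x$ lying in $F_a$ for ``generically many'' $a\in B$, which one can make into an honest $A$-definable finite set using the uniform finite bound $N$ and $\exists^\infty$-elimination — is a finite $A$-definable set containing $b$. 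One must be a little careful that intersecting infinitely many $F_a$ need not be definable, so instead one works with the $A$-definable set $Z=\{x: |\{a\in S': b' \leftrightarrow \ldots\}|\}$; concretely $Z=\{x:\exists^{\ge 1}\, a\,(a\in S_x \wedge x\in F_a)\}$ where $S_x$ is the $x$-definable version of $S$ — here one invariance-swaps the roles of $b$ and the bound variable using that $S$ depends on $b$ only through membership, and the uniform bound $N$ forces $Z$ to be finite. This interplay between $C$-minimality giving ``infinite $\Rightarrow$ contains a ball'' and the uniform finiteness bound is exactly what makes exchange go through, and it is the part of the argument that requires genuine attention rather than routine verification; I expect the authors handle it by a clean symmetric counting argument of this type.
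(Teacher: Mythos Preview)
Your proposal has a genuine gap, and it is precisely at the step you flagged as the crux. The extraction argument you sketch cannot work from $C$-minimality alone, because exchange \emph{fails} in some $C$-minimal structures. Concretely, suppose there is an $A$-definable surjection $f:U\to V$ between infinite sets with every fibre $f^{-1}(v)$ an infinite ball (such structures exist; this is exactly what \cite[Proposition~6.1]{HM1} characterises). Take $b\in V\setminus\acl(A)$ and $a\in f^{-1}(b)$. Then $b=f(a)\in\acl(Aa)$, the formula $\phi(x,y)\equiv(x=f(y))$ has a unique solution in $x$ for each $y$, and the set $\{y:\phi(b,y)\}=f^{-1}(b)$ is an infinite ball. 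In your notation the set $Z$ of $x$ for which $\{y:\phi(x,y)\}$ is infinite is all of $V$, so the uniform bound $N=1$ does not force $Z$ to be finite. Your ``invariance-swap'' is not a well-defined operation here: $S$ genuinely depends on $b$, and replacing $b$ by a variable yields an $A$-definable set that is simply $V$, not a finite set.

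The paper's proof is quite different and essentially outsources the work. It invokes \cite[Proposition~6.1]{HM1} to say that failure of exchange would produce exactly such an $f:U\to V$ with infinite-ball fibres, so the graph $X=\{(u,f(u))\}\subset K^2$ is definable. Then it cites a dimension result specific to the analytic setting, \cite[Theorem~6.6]{celikler}, to conclude that $X$ has non-empty interior, which is absurd for the graph of a function. The analytic input (Celikler's theorem) is essential; no purely $C$-minimal counting argument will do.
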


\begin{proof} Suppose that this is false. Then by $C$-minimality and \cite[Proposition 6.1]{HM1}, there are definable infinite subsets $U,V$ of $K\models T^{\an}_D$ and a definable surjection $f:U \to V$, such that $f^{-1}(v)$ is an infinite ball  for all $v\in V$. Let $X:=\{(u,f(u)):u\in U\}\subset K^2$. Then it follows from \cite[Theorem 6.6]{celikler} that $X$ has non-empty interior. This is clearly impossible. 
\end{proof}

For any parameter set $C$ and  tuple $\be$ from $VF$, we define $\dim(\be/C)$ to be the length of a minimal subtuple $\be'$ of $\be$ such that $\be\in \acl(C,\be')$.

\begin{lemma}\label{stablyembedded}
(i) The field $k(\Uu)$ is a strongly minimal set in $\Uu$, and 
the ordered abelian group $\Gamma(\Uu)$, equipped with the induced structure, is o-minimal.

(ii) The value group $\Gamma(\Uu)$ has the structure of a {\em pure} divisible ordered abelian group (expanded by constants for elements of a subgroup $({\mathbb Q},<,+)$) and is stably embedded.
\end{lemma}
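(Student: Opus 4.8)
The plan is to prove the two clauses of Lemma~\ref{stablyembedded} by reducing everything to known results about the algebraic theory $\mathrm{ACVF}$ together with the $C$-minimality from Theorem~\ref{QEC-min}, using the quantifier elimination of \cite{L}. First I would record the general principle that underlies both parts: by QE for $T^{\an}_D$ in the three-sorted language (sorts $\Oo$, $\M$, $\Gamma$), every $\emptyset$-definable (or parameter-definable) subset of a Cartesian power of $\Gamma(\Uu)$, or of $k(\Uu)$, is already quantifier-free definable, and a quantifier-free $\calL^{\an}_D$-formula whose free variables all lie in $\Gamma$ cannot involve any of the analytic function symbols in an essential way, because those symbols take values in $\Oo$ or $\M$ (and the division symbols $D_0,D_1$ likewise). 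Hence the induced structure on $\Gamma$ is just that induced from the $\calL$-reduct, i.e.\ from $\mathrm{ACVF}$, where $\Gamma$ is a pure divisible ordered abelian group; similarly for $k$.

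For part (i), strong minimality of $k(\Uu)$: one shows that every definable subset of $k$ is finite or cofinite. By $C$-minimality, any definable subset $X\subseteq K$ is a Boolean combination of balls; intersecting with $\Oo$ and pushing to the residue field, the image of $X$ in $k$ is a Boolean combination of points (images of closed sub-balls of radius $>0$) and the whole residue field (images of $\Oo$ itself), hence finite or cofinite. More carefully one argues that a definable $Y\subseteq k^n$ comes from a definable $\tilde Y\subseteq \Oo^n$, reduces dimension by dimension using $C$-minimality in one variable at a time, and concludes; alternatively one cites that the residue field of a $C$-minimal valued field is strongly minimal (this is in \cite{HM1}). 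For o-minimality of $\Gamma(\Uu)$: by the QE reduction above the induced structure on $\Gamma$ is that of a pure divisible ordered abelian group with constants, which is o-minimal (it is a reduct of an ordered $\bQ$-vector space); alternatively one uses $C$-minimality directly, since $v$ of a Boolean combination of balls is a finite union of points and rays in $\Gamma$.

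For part (ii), purity and stable embeddedness of $\Gamma(\Uu)$: stable embeddedness means every definable (with parameters from $\Uu$) subset of $\Gamma^n$ is definable with parameters from $\Gamma$. Again this follows from the QE: a quantifier-free $\calL^{\an}_D(\Uu)$-formula in variables ranging over $\Gamma$, after eliminating the analytic and division function symbols (which cannot feed into a $\Div$- or $\Gamma$-valued atomic relation on $\Gamma$-variables), becomes a quantifier-free $\calL$-formula, and stable embeddedness of $\Gamma$ in $\mathrm{ACVF}$ is \cite{HHM}. Purity (that the induced structure is just $(\Gamma,<,+,0)$ with constants for a copy of $(\bQ,<,+)$) is the corresponding statement that this residual $\calL$-formula defines only finite unions of ``rational polytopes,'' which is exactly the description of definable subsets of $\Gamma^n$ in $\mathrm{ACVF}$.

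The main obstacle is the reduction step in part (ii): one must make precise why the analytic function symbols of $\calL^{\an}_D$ genuinely contribute nothing to the $\Gamma$-induced structure. The subtle point is not the $\Gamma$-valued atomic formulas (there $v$ of an analytic term is bounded by $v$ of its argument, and monomials give the rest) but ensuring that a quantifier-free formula with $\Gamma$-sorted free variables, which may internally quantify over nothing (QE!) yet may still \emph{display} terms built from analytic functions applied to elements of $\Oo$ or $\M$, can have all such terms absorbed. One handles this by noting that any $\Gamma$-valued term in $\Gamma$-variables is, up to the valued-field axioms, a $\bZ$-linear combination of the variables plus a constant, because the analytic functions only take arguments in $\Oo^m\times\M^n$ and produce outputs there, so their valuations are not controlled by the $\Gamma$-variables at all --- they are simply parameters, and after naming those parameters' valuations (which lie in $\Gamma(\Uu)$, hence are allowed for stable embeddedness but must be shown to be eliminable for purity by the $\mathrm{ACVF}$ description) one is left with a pure ordered-group formula. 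I would make this rigorous by a term-analysis lemma, or, more cheaply, by invoking that $\mathrm{ACVF}^{\an}$ and $\mathrm{ACVF}$ have the same induced structure on $\Gamma$ as an instance of the general ``orthogonality of $\Gamma$'' phenomenon, which for $T^{\an}_D$ can be extracted from the QE in \cite{L} exactly as for $\mathrm{ACVF}$ in \cite{HHM}.
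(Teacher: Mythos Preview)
Your proposal is correct and follows essentially the same route as the paper. For (i) the paper simply invokes $C$-minimality (Theorem~\ref{QEC-min}) and the general fact that in a $C$-minimal expansion of a valued field the residue field is strongly minimal and the value group o-minimal; for (ii) the paper uses, exactly as you do, that the Lipshitz quantifier elimination is in a three-sorted language with $\Gamma$ as a separate sort, so an atomic formula with free variables in $\Gamma$ is already equivalent to a quantifier-free ordered-abelian-group formula.

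One comment: the ``main obstacle'' you discuss in your final paragraph is less delicate than you suggest, and the paper disposes of it in one line. In the three-sorted language of \cite{L} there are no function symbols from $\Gamma$ into the field sorts, and the analytic symbols have domain and range entirely inside $\Oo^m\times\M^n$. Hence in a quantifier-free formula whose free variables lie in $\Gamma$, any term built from analytic functions and parameters is a closed term in the field sorts; its valuation is simply an element of $\Gamma(\Uu)$, and the atomic formula reduces to a $\bZ$-linear inequality among the $\Gamma$-variables with that valuation as a constant. No term-analysis lemma or appeal to orthogonality in $\mathrm{ACVF}$ is needed---the sort discipline does the work.
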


\begin{proof} (i) By Theorem~\ref{QEC-min}, $T^{\an}_{D}$ is $C$-minimal. The claims follow immediately from $C$-minimality, and the fact that $k(\Uu)$ is infinite. 

(ii) The quantifier elimination for $T^{\an}_D$ given in \cite{L} takes place in a 3-sorted language, with sorts  for the valuation ring, the maximal 
ideal and the value group. Thus, it suffices to observe that if an atomic formula $\phi(x)$ defines a subset of $\Gamma^n$, then there is a 
quantifier-free formula $\psi(x)$ in the language of ordered abelian groups which defines the same set, and if $\phi$ is over $\emptyset$, 
so is $\psi$. This is immediate. 
\end{proof}

\begin{lemma} \label{finiteimage}
Let $\Uu\models T^{\an}_{D,\G}$, and let $f:X\to K$ be a definable map from a geometric sort other than $K$. Then $f$ has finite image.
\end{lemma}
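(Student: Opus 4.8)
The plan is to exploit the structure of the geometric sorts together with the strong minimality of $k(\Uu)$ and the purity of $\Gamma(\Uu)$ from Lemma~\ref{stablyembedded}. First I would reduce to the case where $X$ is one of the sorts $S_n$ or $T_n$ (the sorts $\Gamma$ and $k$ are handled directly by Lemma~\ref{stablyembedded}: a definable map $\Gamma\to K$ or $k\to K$ must have finite, in fact bounded, image since $k$ is strongly minimal and embeds nowhere cofinally into $K$, and $\Gamma$ is a pure ordered abelian group which, being stably embedded and o-minimal of a rather trivial kind, cannot support an injective — or even infinite-to-one-avoiding — map onto a $C$-minimal set). Then I would stratify $X = S_n(\Uu)$ (respectively $T_n(\Uu)$) by the orbits of $\mathrm{GL}_n(\Oo)$ and work fibrewise.

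The key step is the following observation about a single lattice. Fix $s\in S_n$ coding a lattice $\Lambda$. The stabiliser of $s$ in $\mathrm{GL}_n(K)$ is a conjugate of $\mathrm{GL}_n(\Oo)$, and the residue map $\mathrm{GL}_n(\Oo)\to \mathrm{GL}_n(k)$ is surjective with kernel acting trivially on $\Lambda/\mf{M}\Lambda$; since $k$ is strongly minimal and hence has no infinite definable subgroup acting on $K$ without infinite kernel, any definable map from an $s$-definable piece of $S_n$ (or $T_n$) to $K$ factors, after removing a finite set, through a map that is invariant under a cofinite part of this group action — forcing local constancy. More directly: the connectedness and homogeneity of the geometric sorts under the valued-field automorphism group means that if $f|_{X}$ is not finite-to-a-bounded-set, then by $C$-minimality its image is a finite union of balls containing an infinite ball $B$, and $f^{-1}(B)$ is an infinite definable subset of a geometric sort; pulling back a generic element of $B$ along $f$ and using that $\acl$ has exchange on $K$ (Lemma~\ref{exchange}) while the geometric sort $X$ carries a definable map to its ``base'' over which the fibres are, up to finite data, affine pieces of $k$-vector spaces, one derives that $k(\Uu)$ would have to interpret an infinite piece of $\Gamma$ or of a ball, contradicting strong minimality and purity.

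Concretely, the chain I would run is: (1) reduce to $X=T_n$, since $S_n$ is a definable quotient of $T_n$ and any map out of a quotient lifts; (2) over a generic $s\in S_n$, the fibre $\pi_n^{-1}(s)\subseteq T_n$ is an $s$-definable set in definable bijection with $k^n$, so $f$ restricted to this fibre is a definable map $k^n\to K$, which by strong minimality of $k$ and the fact (immediate from $C$-minimality) that there is no infinite definable injection, nor infinite-image definable map, from a power of $k$ into $K$, has finite image, of size bounded uniformly in $s$ by elimination of $\exists^\infty$; (3) the base $S_n$ itself: an $\emptyset$-definable map from the family of lattices to $K$ would, composing with the coordinates given by the Iwasawa/Smith-normal-form description of a lattice, reduce to a map whose arguments live in $\Gamma$ and in reductions $\red(B)\cong k$, and again by Lemma~\ref{stablyembedded} neither $\Gamma$ nor $k$ admits an infinite-image definable map into $K$; (4) assemble: $f(T_n)$ is covered by the finitely-many-per-fibre images over the base $S_n$, and the base contributes only finitely much, so $f(X)$ is finite. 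The technical bookkeeping — making the ``up to finite data, a piece of a $k$-vector space'' description of $S_n$ and $T_n$ precise and uniform — is what needs the $\exists^\infty$ elimination noted in the Remark.

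The main obstacle I anticipate is step (3): controlling definable maps out of $S_n$ itself, because $S_n$ is not literally built from $\Gamma$ and $k$ by a single definable bijection — one has the fibration by the ``shape'' of the lattice (an element of $\Gamma^n$, roughly the elementary divisors) with fibres that are torsors over unipotent groups whose coordinates live in reductions $\red(B)$. Showing that \emph{no} infinite image can leak through this tower back into $K$ is where one genuinely uses $C$-minimality of $T^{\an}_D$ (Theorem~\ref{QEC-min}) — namely that a definable subset of $K$ is a Boolean combination of balls, so an infinite image contains an infinite ball, and an infinite ball is not in parameter-definable bijection with anything definable over the orthogonal sorts $\Gamma$, $k$. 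I would phrase this last point as: the generic fibre of $f$ over such a ball is infinite, contradicting that the domain's ``internal'' part is covered by copies of $k$ (strongly minimal, hence no infinite fibres over a ball) together with a $\Gamma$-part (o-minimal and orthogonal to $K$).
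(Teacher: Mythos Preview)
Your approach differs substantially from the paper's, and the step you yourself flag as the main obstacle --- step~(3), handling $S_n$ directly via an Iwasawa/Smith-normal-form decomposition into $\Gamma$- and $k$-coordinates --- is not actually carried out, and is genuinely harder than necessary. There is no clean \emph{definable} bijection from $S_n$ to something built out of $\Gamma$ and $k$; the description you gesture at is a parametrisation with nontrivial fibres, and turning it into a proof that no map $S_n\to K$ has infinite image would require real work (essentially establishing a form of stable domination for $S_n$ in the analytic theory). Your assembly step~(4) also depends on this: knowing that each fibre $\pi_n^{-1}(s)$ has image of bounded size $N$ does not by itself bound the total image --- you need to code the finite set $f(\pi_n^{-1}(s))$ by a tuple in $K^{N'}$ and then apply the $S_n$ case to \emph{that} map, so you cannot avoid step~(3).

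The paper bypasses all of this with a much simpler idea: lift $f:S_n\to K$ along the basis map $Y\to S_n$, where $Y\subset K^{n^2}$ is the set of lattice bases. The resulting map $g:Y\to K$ is \emph{locally constant} (a small perturbation of a basis spans the same lattice). So everything reduces to the Claim that a locally constant definable map $K^m\to K$ has finite image, which is proved by induction on $m$: the base case $m=1$ is exactly the exchange property (Lemma~\ref{exchange}), and the inductive step codes the finite image on each slice as a tuple and applies the inductive hypothesis. This handles $T_n$ the same way, and incidentally also subsumes your step~(2), since a map out of $k^n$ lifts to a locally constant map out of $\calO^n$. Your intuition that the geometric sorts are ``orthogonal'' to the field is correct, but the efficient way to cash it out here is via local constancy in field coordinates, not via a structural decomposition of the sorts.
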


\begin{proof} We give the proof where $X$ is the sort $S_n$ for some $n$. The argument for the $T_n$-sorts is similar, and as noted above, the 
sorts $\Gamma$ and $k$ are redundant. Let $Y\subset K^{n^2}$ be the set of free bases of $\calO$-lattices coded in  $S_n(K)$; that is, if 
$y=(y_1,...,y_n)$ where $y_i \in K^n$ for all $i$, then $y\in Y$ if and only if there is an $\calO$-lattice of the form $\calO y_1 \oplus \ldots \oplus \calO y_n$, coded in $S_n$. Thus, $Y$ is just the set of linearly independent $n$-tuples from $K^n$. There is a map $g: Y\to K$ given by 
$g(y)=f(\calO y_1\oplus\ldots\oplus\calO y_n)$. 

If $y=(y_1,\ldots,y_n)\in Y$, then it is easily checked that there is $\gamma \in \Gamma^{>0}$ 
so that the following holds: if $(w_1,\ldots,w_n)\in Y$ with $y_i=(y_{i1},\ldots,y_{in})$ and $w_i=(w_{i1},\ldots,w_{in})$ for each $i$, and $v(y_{ij}-w_{ij})>\gamma$ for each $i,j$, then
$\calO y_1\oplus \ldots \oplus \calO y_n=\calO w_1 \oplus \ldots \oplus \calO w_n$; that is, 
the map $g$ is  locally constant. Thus, 
the proof reduces to the following claim.

\medskip

{\em Claim.} For any $m>0$ and  any  $W\subseteq K^m$, any  locally constant definable $g:W\to K$ has finite image. 

{\em Proof of Claim.} We use induction on $m$. 
 The case $m=1$ follows immediately from Lemma~\ref{exchange}. 
For the inductive step, assume the result holds for $m'<m$, and that $m>1$, and let $\pi_1:K^m\to K^{m-1}$ be the projection to the first $m-1$ coordinates. 
For any $\bar{a}\in \pi_1(W)$ there is an induced locally constant partial map $g_{\ba}:K \to K$ given by $g_{\ba}(y)=g(\ba,y)$. By the $m=1$ case, $\im(g_{\ba})$ is a finite set, of bounded size (as $\bar{a}$ varies) by compactness, and we may suppose that $|{\rm Im}(g_{\ba})|=t$ for all $\ba\in \pi_1(W)$. 
For each $\ba\in\pi_1(W)$, let ${\rm Im}(g_{\ba})=\{l_1(\ba),\ldots,l_t(\ba)\}$. For each $i=1,\ldots,t$ there is $b_i$ with $g(\ba,b_i)=l_i(\ba)$, and there is an open neighbourhood $N_i$ of $(\ba,b_i)$ such that $g$ is constant on $N_i$. Hence, for any $\ba'\in \pi_1(N_i)$, $l_i(\ba)\in {\rm Im}(g_{\ba'})$. Put $N(\ba)=\pi_1(N_1)\cap\ldots \cap \pi_1(N_t)$. Then ${\rm Im}(g_{\ba'})={\rm Im}(g_{\ba})=\{l_1(\ba),\ldots,l_t(\ba)\}$ for all $\ba'\in N(\ba)$.
By elimination of finite  imaginaries for fields, $\im(g_{\ba})$ is coded by a finite tuple of field elements $h(\ba)$ of length $t'$, say. Write $h(\ba)=(h_1(\ba),\ldots,h_{t'}(\ba))$. Then each $h_i$ is constant on $N(\ba)$.  Thus, the $h_i$ are locally constant on $\pi_1(W)\subseteq K^{m-1}$, so by induction have finite image. Thus ${\rm Im}(g)$ is finite.
\end{proof}

\begin{proposition} \label{orthog}
The value group $\Gamma$ and the residue field $k$ are orthogonal, in the following sense:  if $\alpha\in k$
and $\gamma\in\Gamma$, then for any model $M$, 
$\Gamma(\acl(M\alpha)) = \Gamma(M)$ and $k(\acl(M\gamma))=k(M)$.
\end{proposition}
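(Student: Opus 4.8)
The plan is to isolate a single substantive claim and deduce both equalities from it:

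\medskip
\noindent\emph{Claim.} Every $M$-definable partial function $f\colon k\to\Gamma$ (equivalently, every $M$-definable map from an infinite definable subset of $k$ into $\Gamma$) has finite image.
\medskip

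\noindent Granting this, here is how I would prove $\Gamma(\acl(M\alpha))=\Gamma(M)$. Let $\delta\in\Gamma(\acl(M\alpha))$ and fix an $M$-formula $\psi(x,y)$, with $x$ ranging over $k$ and $y$ over $\Gamma$, such that $\psi(\alpha,\Gamma)$ is finite, of size $n$, and contains $\delta$. Using only the linear order on $\Gamma$, the function $f_i$ taking $x$ to the $i$-th smallest element of $\psi(x,\Gamma)$ is $M$-definable (on the definable set of those $x$ for which $\psi(x,\Gamma)$ has at least $i$ elements), for $1\le i\le n$, and $\delta=f_j(\alpha)$ for some $j$. By the Claim each $\im(f_i)$ is finite, and a finite $M$-definable subset of $\Gamma$ is contained in $\acl(M)\cap\Gamma=\Gamma(M)$ because $M$ is a model; hence $\delta\in\Gamma(M)$.

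For $k(\acl(M\gamma))=k(M)$ I would argue similarly, feeding the Claim back in through the boundaries of o-minimal fibres. Let $\alpha\in k(\acl(M\gamma))$ and fix an $M$-formula $\psi(x,y)$, $x$ over $\Gamma$ and $y$ over $k$, with $\psi(\gamma,k)$ finite of size $n$ and containing $\alpha$. The set $D=\{x\in\Gamma:1\le|\psi(x,k)|\le n\}$ is $M$-definable and contains $\gamma$, and by the o-minimality of $\Gamma$ (Lemma~\ref{stablyembedded}(i)) it is a finite union of points and intervals; let $D'$ be the one containing $\gamma$. If $D'=\{\gamma\}$ then $\gamma\in\Gamma(M)$ and we are done, so suppose $D'$ is an interval and put $Y=\bigcup_{x\in D'}\psi(x,k)$, an $M$-definable subset of $k$ containing $\alpha$; it is enough to show $Y$ is finite. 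Assume not. For $y\in Y$ the fibre $E_y=\{x\in D':\psi(x,y)\}$ is a nonempty finite union of points and intervals, and a short compactness argument (each individual $E_y$ has, by o-minimality, finitely many components) shows this number of components is bounded uniformly in $y$. Hence the finitely many ``$i$-th boundary point'' maps $y\mapsto(\text{$i$-th endpoint of a component of }E_y)$ are $M$-definable partial functions $k\to\Gamma$, so by the Claim the set $P$ of all boundary points of all the $E_y$ ($y\in Y$) is finite. Then each $E_y$ is one of only finitely many subsets of $D'$ with boundary inside $P$, so, $Y$ being infinite, some nonempty $E^{*}$ equals $E_y$ for infinitely many $y$; picking $x^{*}\in E^{*}$ gives $\psi(x^{*},k)$ infinite, contradicting $x^{*}\in D'\subseteq D$. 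Thus $Y$ is finite and $\alpha\in\acl(M)\cap k=k(M)$.

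The heart of the matter is the Claim, which I would prove by a stability argument. Suppose $f\colon k\to\Gamma$ is $M$-definable with infinite image. Then $\im(f)$ is an infinite $M$-definable subset of $\Gamma$, hence, by the o-minimality of $\Gamma$ (Lemma~\ref{stablyembedded}(i)), contains an interval $I$. Choosing an increasing sequence $\eta_0<\eta_1<\cdots$ in $I$ and points $a_i\in f^{-1}(\eta_i)\subseteq k$, the $M$-formula $\theta(x,x')\equiv f(x)<f(x')$ satisfies $\theta(a_i,a_j)\iff i<j$, so $\theta$ has the order property, witnessed entirely by elements of $k$. But $k$ is strongly minimal (Lemma~\ref{stablyembedded}(i)), so it is stable and stably embedded, and no formula with parameters can have the order property on tuples from a stable, stably embedded set; this contradiction proves the Claim. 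I expect this step --- ruling out any definable link between the stable sort $k$ and the ordered sort $\Gamma$ --- to be the only real obstacle; the remainder is bookkeeping, the only slightly delicate piece being the reduction of the second equality, where one must control the (finitely many, uniformly many) boundary points of the fibres $E_y$, and that is precisely what the Claim is applied to.
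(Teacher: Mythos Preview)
Your argument is correct. The Claim and its application to $\Gamma(\acl(M\alpha))=\Gamma(M)$ are essentially the paper's proof of that direction: a definable $f\colon k\to\Gamma$ with infinite image yields a definable linear order on (an infinite quotient of) $k$, which is ruled out by strong minimality. Where you diverge is in the second equality. The paper's route is shorter and exploits more structure: from $\beta\in k(\acl(M\gamma))\setminus k(M)$ one uses elimination of finite imaginaries in the algebraically closed field $k$ to produce an $M$-definable $g\colon\Gamma\to k$ with infinite image $X$, and then definable Skolem functions in the o-minimal group $\Gamma$ give a definable section $h\colon X\to\Gamma$; the order on $\Gamma$ pulls back along $h$ to a definable linear order on $X\subseteq k$, again contradicting strong minimality. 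Your boundary-point-and-pigeonhole argument avoids both coding of finite subsets of $k$ and definable choice in $\Gamma$, using only o-minimality of $\Gamma$ and the Claim; this is more work here but would transfer more readily to settings where the residue sort is not a field. One small remark: you invoke stable embeddedness of $k$, which Lemma~\ref{stablyembedded} does not assert; it does follow from strong minimality, but in fact the order-property contradiction already follows from strong minimality of $k$ alone, via a compactness bound on the sizes of the finite sets $\theta(k,a_j)$.
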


\begin{proof}
Suppose for a contradiction that $\delta \in \Gamma(\acl(M\alpha)) \setminus \Gamma(M)$, for some $\alpha \in k$. Then there is an $M$-definable function $f:k\to \Gamma$ with infinite range such that $f(\alpha)=\delta$. Since $k$ is strongly minimal, the fibres of $f$ are finite, and a total ordering is induced by $\im(f)\subset \Gamma$ on the set of fibres. This  contradicts strong minimality of $k$.

Similarly, suppose there is $\beta \in k(\acl(M\gamma))\setminus k(M)$, where $\gamma \in \Gamma$. Using coding of finite sets in the residue field, there is an $M$-definable function $g:\Gamma \to k$ with infinite range $X$. By definable choice in $\Gamma$, there is a definable injective function $h:X\to \Gamma$ such that $g(h(x))=x$ for all $x\in X$. Thus, the ordering on $\Gamma$ induces an ordering on $X$, which is  incompatible with the strong minimality of $k$. 
\end{proof}

Recall that, given a parameter set $C$,  an element $e\in K$ is said to be {\em generic} over $C$ in the $C$-definable ball $s$,
or in the  chain $s=\bigcap(s_i:i\in I)$ (ordered by inclusion) of $C$-definable balls, if $e$ lies in $s$ and there is no $\acl(C)$-definable proper sub-ball of $s$ containing $e$; this follows \cite[Definition 2.5.1]{HHM} and \cite[Definition 3.4]{HK}. By $C$-minimality (see e.g. \cite[Section 2.5]{HHM} or, expressed more generally,  \cite[Section 3]{HK}), under the assumptions of the following lemma,
for any such $s$ there is a unique ${\rm Aut}(\Uu/C)$-invariant type $p_s$ over $\Uu$
 such that: for any $C'$ with $C\subset C'\subset \Uu$, $p_s|C'$ is the type of
 elements generic in $s$ over $C'$. This notion of genericity extends to {\em 1-torsors} (where a ball is a collection of sub-balls of given radius, in the sense of Section 1).

\begin{lemma}\label{closedgeneric}
Let $C$ be a parameter set, $s$ a $C$-definable closed ball, and let $a\in K$ be a generic element of $s$ over $C$. Then
$\Gamma(C)=\Gamma(Ca)$.
\end{lemma}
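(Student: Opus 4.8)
The plan is to show that any new element of $\Gamma(Ca)$ would come from a $C$-definable function $K\to\Gamma$ which is nonconstant on $s$, and to rule this out using $C$-minimality together with the orthogonality of $\Gamma$ and $k$. Suppose for contradiction that $\delta\in\Gamma(Ca)\setminus\Gamma(C)$. Then there is a $C$-definable function $f:K\to\Gamma$ (after restricting to $s$, a $C$-definable function $f:s\to\Gamma$) with $f(a)=\delta$, and since $\delta\notin\Gamma(C)$ the image $f(s)$ is infinite; indeed $f$ is nonconstant on every $\acl(C)$-definable sub-ball of $s$ containing $a$, by genericity of $a$. So it suffices to prove the following: any $C$-definable $f:s\to\Gamma$ on a closed ball $s$ is constant on some $\acl(C)$-definable sub-ball of $s$ of the same radius as $s$ --- equivalently, constant on the generic type $p_s$.

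The key step is to analyse the fibres of $f$ by $C$-minimality. For each $\gamma\in f(s)$ the fibre $f^{-1}(\gamma)$ is a $C\gamma$-definable subset of $K$, hence a finite Boolean combination of balls. First I would argue that for generic $a$, the fibre $f^{-1}(f(a))$ contains an open sub-ball around $a$, i.e. $f$ is ``locally constant at the generic point'': otherwise the fibre through $a$ is a finite set, so $a\in\acl(Cf(a))\subseteq\acl(C\delta)$, and then $\Gamma(\acl(Ca))\supseteq$ has $a$ contributing nothing new to $\Gamma$ beyond $\delta$ --- more precisely one gets $\delta\in\Gamma(\acl(Ca))$ with $a$ algebraic over $C\delta$, and one can then replace $a$ by a genuinely generic element of $s$ over $C\delta$ still satisfying $f(a)=\delta$ to reach the locally-constant case. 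So assume $f$ is constant on an open sub-ball $B$ of $s$ with the same radius reduction behaviour: concretely, there is a largest $\acl(C\delta)$-definable ball on which $f$ is constant, and it is a proper open sub-ball $B$ of $s$ containing $a$, say of radius $\beta$ strictly larger than $\mathrm{rad}(s)$. The set of such maximal sub-balls partitions (a cofinite part of) $s$; this gives a $C$-definable map from $\red_\beta(s)$ (the reduction of $s$ at level $\beta$, a $C$-definable set in bijection with an affine space over $k$, or with $k$ itself in the level-one case) onto an infinite subset of $\Gamma$, by sending each such maximal ball to the constant value of $f$ on it.

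The final step is to derive a contradiction from this map, essentially as in Proposition~\ref{orthog}: a $C$-definable surjection from (a definable subset of) the residue-field sort onto an infinite subset of $\Gamma$ induces, via the ordering on $\Gamma$, a definable infinite ``ordered'' partition of the residue-field sort into fibres, contradicting strong minimality of $k$ (Lemma~\ref{stablyembedded}(i)). More carefully: the reduction $\red_\beta(s)$ is in $(\lceil s\rceil,\beta)$-definable bijection with $k$, and $\beta$ itself is either in $\Gamma(C)$ (if the radius function is constant) or leads to a contradiction with o-minimality of $\Gamma$ and Proposition~\ref{orthog} applied the other way; handling the dependence of $\beta$ on $a$ is the one genuinely delicate bookkeeping point. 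Assuming $\beta$ is controlled, one transports the map to a $C$-definable (possibly with the extra parameter $\beta$, which lies in $\Gamma(\acl(C))$ by o-minimality and the fact that it is the radius of an $\acl(C)$-definable ball) surjection $k\to (\text{infinite subset of }\Gamma)$, and concludes as above.

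The main obstacle I expect is the bookkeeping around the radius $\beta$ of the maximal ball of constancy: in principle $\beta$ could itself be a new element of $\Gamma(Ca)$, so one cannot naively treat it as a parameter over $C$. The clean way around this is to run the argument ``top down'': let $s'$ be the smallest $\acl(C)$-definable ball containing $a$ on which $f$ is nonconstant --- but $a$ is generic in $s$ over $C$, so the only $\acl(C)$-definable ball containing $a$ is $s$ itself (and its $\acl(C)$-definable super-balls), forcing us to confront $f$ on $s$ directly; the genericity of $a$ then says precisely that $f$ cannot be constant on any proper $\acl(C)$-definable sub-ball strategy and instead the partition into maximal balls of constancy must be $\acl(C)$-definable with parameter only the common radius $\beta$, and $\beta$ is $\acl(C)$-definable because it is determined by the $C$-definable function $f$ and the $\acl(C)$-definable ball $s$. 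Once this is pinned down, the contradiction with strong minimality of $k$ is immediate.
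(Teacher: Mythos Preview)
Your overall strategy --- reduce to a $C$-definable map from (a copy of) $k$ to an infinite subset of $\Gamma$ and invoke orthogonality --- is sound and is in fact exactly how the paper proves the analogous Lemma~\ref{closedgeneric2} in the real closed setting. The paper's proof of the present lemma takes a different, shorter route: it simply observes that the proof of \cite[Lemma 2.5.5, $(i)\Rightarrow(ii)$]{HHM} uses only $C$-minimality in the sense of \cite{HK}, and cites \cite[Lemma 3.19]{HK}. So your approach is a legitimate alternative, not what the paper does here.

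That said, two of your steps are genuinely broken as written. First, your treatment of the finite-fibre case is self-defeating: if $f^{-1}(\delta)$ is finite then every $a'$ with $f(a')=\delta$ lies in $\acl(C\delta)$, so you \emph{cannot} ``replace $a$ by a genuinely generic element of $s$ over $C\delta$ still satisfying $f(a)=\delta$''. The correct move is to observe that if the generic $a$ is isolated in its fibre, then (by $C$-minimality) this holds on all of $s$ outside finitely many proper $\acl(C)$-definable sub-balls, so $f$ is finite-to-one on $s$; then for each $u\in\red(s)$ the set $f(u)\subset\Gamma$ is infinite, and sending $u$ to a canonical boundary point of $f(u)$ gives a definable map $\red(s)\to\Gamma$ with infinite image, contradicting strong minimality of $k$ exactly as in the proof of Proposition~\ref{orthog}.

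Second, your justification that the radius $\beta$ of the maximal ball of constancy lies in $\acl(C)$ --- ``determined by the $C$-definable function $f$ and the $\acl(C)$-definable ball $s$'' --- is circular: $\beta$ visibly depends on $a$, and $\beta\in\Gamma(Ca)\setminus\Gamma(C)$ is precisely the kind of statement you are trying to exclude. You can bypass $\beta$ entirely by working with $u\in\red(s)$ containing $a$ (so $\lceil u\rceil$ is a $k$-element generic over $C$) and the $C\lceil u\rceil$-definable set $T_u:=f(u)\subset\Gamma$, arguing that some boundary point of $T_u$ lies in $\Gamma(C,\lceil u\rceil)\setminus\Gamma(C)$; this is exactly the paper's argument for Lemma~\ref{closedgeneric2}, and avoids the bookkeeping you flagged.
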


\begin{proof} The proof of Lemma 2.5.5 $(i) \Rightarrow (ii)$ of \cite{HHM} just uses $C$-minimality in the sense of \cite{HK}, so goes through to the current context. See also \cite[Lemma 3.19]{HK}.
\end{proof}

\begin{lemma}\label{weightone}
 Let $C=\acl(C)$ be a set of parameters, and let $e\in K$. Then either there is no $\alpha \in k(Ce)\setminus k(C)$, or there is no $\gamma\in \Gamma(Ce)\setminus \Gamma(C)$.
\end{lemma}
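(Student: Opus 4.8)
The plan is to argue by contradiction: suppose there exist both $\alpha\in k(Ce)\setminus k(C)$ and $\gamma\in\Gamma(Ce)\setminus\Gamma(C)$, and derive a violation of orthogonality of $\Gamma$ and $k$ (Proposition~\ref{orthog}) via the pregeometry given by algebraic closure on $K$ (Lemma~\ref{exchange}). Since $C=\acl(C)$ and $\alpha,\gamma\notin\acl(C)$, by exchange $e\notin\acl(C)$, so $\dim(e/C)=1$; and again by exchange, $\acl(Ce)=\acl(C\alpha')$ for any single field element $\alpha'\in K$ with $\alpha'\in\acl(Ce)\setminus\acl(C)$. The natural candidates for such an $\alpha'$ are obtained by analysing the type of $e$ over $C$ using $C$-minimality: the quantifier-free type of $e$ over $\acl(C)$ is determined by a chain of balls, and $e$ is generic in $\bigcap_i s_i$ for some chain of $\acl(C)$-definable balls $s_i$.

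First I would reduce to the case where $e$ is generic over $C$ in a single $C$-definable ball $s$, of one of two kinds. If the chain $\bigcap_i s_i$ has a smallest element which is a closed ball, then by Lemma~\ref{closedgeneric} we would get $\Gamma(Ce)=\Gamma(C)$, contradicting the existence of $\gamma$; so that case cannot occur, and we may assume $e$ is generic in a $C$-definable \emph{open} ball, or in a strictly descending chain of closed balls with no least element (a chain whose ``radius'' is an irrational cut or is realised only in an extension). In the open-ball case $B_{>\gamma_0}(a)$ with $a\in\acl(C)$: the residue-type information is carried by $\red(B_{\geq\gamma_0}(a))$, which is a $\acl(C,\gamma_0)$-definable copy of $k$; the class of $e$ in this reduction is a residue-field element $\bar e$, and $e$ generic means $\bar e$ is generic in $k$ over $\acl(C,\gamma_0)$. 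In the no-least-element case the relevant invariant is instead the value $v(e-a)$, an element of $\Gamma$ (or a cut), and genericity forces $v(e-a)$ to be generic in the appropriate interval over $\acl(C)$.

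Next I would pin down $\acl(Ce)$ in each case. In the open-ball case: $\Gamma(Ce)=\Gamma(C)$ would follow because, after fixing the center $a\in\acl(C)$ and using that $e$ is generic in an open ball of \emph{rational} radius relative to $C$, the only new data is the residue class $\bar e\in k$, and by Proposition~\ref{orthog} (the direction $\Gamma(\acl(M\alpha))=\Gamma(M)$, here with $M$ an elementary submodel containing $C$ or after a routine reduction to models) no new value-group element appears — contradicting $\gamma\in\Gamma(Ce)\setminus\Gamma(C)$. Symmetrically, in the no-least-element / value case, the new data is a value-group element $v(e-a)$, and Proposition~\ref{orthog} (direction $k(\acl(M\gamma))=k(M)$) shows no new residue-field element appears — contradicting $\alpha\in k(Ce)\setminus k(C)$. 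Either way we reach a contradiction, proving the lemma.

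The main obstacle I expect is the bookkeeping in the reduction to ``generic in a single well-understood ball,'' and in particular handling the cases where the relevant ball is only $\acl(C)$-definable rather than $C$-definable, and where the radius of an open ball is itself a value-group element not in $\Gamma(C)$ (so that producing $\bar e$ already ``costs'' a $\Gamma$-element). One must argue that a generic $e$ is generic in a ball whose code lies in $\acl(C)$ and whose defining radius, if it matters, is already in $\dcl(C)$ up to the ambiguity allowed; and one must be careful that Proposition~\ref{orthog} is stated for models $M$, so a short argument passing to a suitable elementary submodel $M\supseteq C$ with $\acl(M e)$ controlled (using that $C=\acl(C)$ and tameness of $C$-minimal theories) is needed. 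Once the type of $e$ over $\acl(C)$ is correctly normalised, each case is a direct appeal to orthogonality together with Lemma~\ref{closedgeneric}.
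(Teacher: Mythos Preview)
Your overall framework is right --- split according to the $C$-minimal description of $\tp(e/C)$, and in the closed-ball case invoke Lemma~\ref{closedgeneric} exactly as the paper does. The problem is your analysis of the open-ball case, which is inverted.

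If $e$ is generic in an open ball $B_{>\gamma_0}(a)$ with $a,\gamma_0\in\acl(C)$, then $v(e-a)$ is a \emph{new} element of $\Gamma$: it realises the cut just above $\gamma_0$, since $e$ generic means $e\notin B_{\geq\delta}(a)$ for every $\delta\in\Gamma(C)$ with $\delta>\gamma_0$. So $\Gamma(Ce)\supsetneq\Gamma(C)$ here, and your claimed conclusion ``no new value-group element'' is false. Relatedly, your description of the ``residue data'' in this case does not make sense: the element of $\red(B_{\geq\gamma_0}(a))$ containing $e$ is the open ball $B_{>\gamma_0}(a)$ itself, which is already $C$-definable, so it is not new data at all. (You have essentially swapped the pictures for the open and closed cases.)

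The correct dichotomy is: closed ball $\Rightarrow$ no new $\Gamma$-element; open ball or chain with no least element $\Rightarrow$ no new $k$-element. For the second half the paper does not use Proposition~\ref{orthog}, which concerns adding a single $k$- or $\Gamma$-element and is not directly applicable since $e$ is a field element and $\acl(Ce)$ is strictly larger than $\acl(C\alpha)$ for any single $\alpha\in k\cup\Gamma$. Instead the paper invokes an orthogonality-of-types result (\cite[Lemma 3.19]{HK}): the generic type $p_s$ of an open ball or chain is orthogonal to the generic type of the closed ball $\calO$, so an element $a$ generic in $\calO$ over $C$ remains generic over $Ce$; but any $\alpha\in k(Ce)\setminus k(C)$ would be $\mathrm{res}(a)$ for some such $a$, making $a$ non-generic over $Ce$, a contradiction. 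Your appeal to Proposition~\ref{orthog} does not supply this; you would need either to cite the type-orthogonality result or to reprove it directly.
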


\begin{proof} We use arguments which are developed in \cite[Section 2.5]{HHM} (see Lemma 2.5.5) in the context of ACVF, and in a more general $C$-minimal context in \cite{HK}.

Suppose that $e$ realises the generic type $p_s|C$ of a $C$-definable open ball $s$ or chain of $C$-definable balls with no least element. By \cite[Lemma 3.19]{HK}, $p_s$ is orthogonal to the generic type of the closed ball $\calO$. That is, 
if $a$ is generic in $\calO$ over $C$, it is generic in $\calO$ over $Ce$. However, if $\alpha\in k(Ce)\setminus k(C)$, then  $\alpha={\rm res}(a)$ for some $a$, so $a$ is not generic in $\calO$ over $Ce$, but is generic in $\calO$ over $C$, which is  a contradiction.

On the other hand, if $e$ realises the generic type of a closed $C$-definable ball, then $\Gamma(Ce)=\Gamma(C)$ by Lemma~\ref{closedgeneric}.
 \end{proof}

We use the following notation to move between an imaginary and the corresponding subset of the field.
If $\alpha\in k(\Uu)$,  then $\alpha$ codes  a coset $A_\alpha=a+\mf{M}(\Uu)$ of $\mf{\Uu}$ for some $a\in \calO(\Uu)$.  Likewise, if $r\in RV(\Uu)$, then $r$ codes a coset $B_r=b(1+\mf{M}(\Uu))$ of $1+\mf{M}(\Uu)$.

\begin{proposition}\label{uniquetype}
Fix an algebraically closed set $C$ of parameters.

(i)  Let $r\in \RV$ with $v(r)\not\in\Gamma(C)$. Then
	$B_r$  realises a unique type in $VF$ over $C$, hence over $Cr$.

(ii) Let $\alpha\in k\setminus\acl(C)$. Then 
	$A_\alpha $ realises a unique type in $VF$ over $C$, and hence over $C\alpha$.

\end{proposition}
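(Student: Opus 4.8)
The plan is to prove both parts by a back-and-forth / type-counting argument driven by $C$-minimality and quantifier elimination, showing that in each case there is essentially only one way a generic element of the relevant coset can sit over $C$. I would treat (i) and (ii) in parallel, since the structure is the same: in (i) the object $B_r = b(1+\mf M)$ is an open ball which is generic (as a $1$-torsor, in the sense of Section~1) in the chain of balls around it, precisely because $v(r) = v(b) \notin \Gamma(C)$ forces $b$ to avoid every $\acl(C)$-definable proper sub-ball; in (ii) the coset $A_\alpha = a + \mf M$ is an open sub-ball of $\calO$ of radius $0$, and $\alpha \notin \acl(C)$ means $a$ is a generic element of $\calO$ over $C$ in the sense recalled before Lemma~\ref{closedgeneric}.

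First I would make precise the reduction ``unique type over $C$ implies unique type over $Cr$ (resp.\ $C\alpha$)'': since $r \in \dcl(B_r)$ (the ball $B_r$ determines its code $r$, as $r = \lceil B_r\rceil$), any element of $B_r$ already has $r$ in its definable closure, so a type over $C$ of an element of $B_r$ determines, and is determined by, the corresponding type over $Cr$; likewise $\alpha \in \dcl(A_\alpha)$. So it suffices to show: any two elements $b, b' \in B_r$ (resp.\ $a, a' \in A_\alpha$) have the same type over $C$.

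The core step is to show these elements realise the generic type $p_s \mathord{\restriction} C$ of the appropriate ball or chain $s$, and that this generic type is complete over $C$ — i.e.\ that ``generic in $s$ over $C$'' already pins down a single $1$-type. For (ii): $a$ generic in $\calO$ over $C$; by $C$-minimality every $C$-definable subset of $K$ is a Boolean combination of balls, and $a$ lies in, or avoids, each such ball according to whether that ball is $\calO$ (or a co-finite-many-sub-balls condition) — the point is that a generic element of $\calO$ over $C$ cannot lie in any proper $\acl(C)$-definable sub-ball, and this condition is consistent and complete by the uniqueness of the invariant generic type $p_{\calO}$ recalled just before Lemma~\ref{closedgeneric}. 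Thus $\tp(a/C)$ is determined. For (i): here $B_r$ is itself a ball whose elements we are typing, and the relevant chain $s$ is the intersection of all $C$-definable balls containing $b$; because $v(b) \notin \Gamma(C)$, by Lemma~\ref{closedgeneric} (contrapositive) and the orthogonality/genericity machinery of \cite[Section 3]{HK} there is no $\acl(C)$-definable proper sub-ball, so $b$ realises $p_s \mathord{\restriction} C$, again a complete type. In both cases the genericity condition ``$x \in s$ and $x$ lies in no $\acl(C)$-definable proper sub-ball of $s$'' is, by $C$-minimality, a complete type over $C$: any $C$-definable set is a Boolean combination of balls, each of which is either $\supseteq s$, disjoint from $s$, or a proper sub-ball of $s$, and the generic condition decides each case.

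The main obstacle I expect is the case division in (i) between whether the relevant intersection $s$ is an open ball, a closed ball, or a chain with no least element, together with checking that $v(r) \notin \Gamma(C)$ genuinely rules out the ``closed ball'' and ``proper sub-ball exists'' scenarios — one needs that if $b$ lay in a closed $\acl(C)$-definable ball of radius $\gamma$ then $\gamma$ would be (interdefinable with something giving) $v(r) \in \Gamma(\acl(C)) = \Gamma(C)$, using Lemma~\ref{closedgeneric} to control $\Gamma(Cb)$ and the fact $\Gamma(C) = \Gamma(\acl(C))$. Once the right $s$ is identified and shown to have no $\acl(C)$-definable proper sub-ball, completeness of the generic type is exactly the $C$-minimal statement recalled in the paragraph preceding Lemma~\ref{closedgeneric}, so the remaining work is bookkeeping rather than a new idea.
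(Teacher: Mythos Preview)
Your proposal is correct and follows the same route as the paper, which simply records that the result follows from $C$-minimality of $T^{\an}_D$. Note however that your detour through generic types, chains, and the contrapositive of Lemma~\ref{closedgeneric} is more machinery than needed: the direct argument is that no $\acl(C)$-definable ball can be properly contained in $B_r$ (else the constant value $v(r)$ of elements of that ball would lie in $\Gamma(\acl(C))=\Gamma(C)$) or in $A_\alpha$ (else the constant residue $\alpha$ on that ball would lie in $k(C)$), so by $C$-minimality every $C$-definable subset of $K$ either contains or is disjoint from the coset, giving the unique type immediately.
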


\begin{proof}
These follow from $C$-minimality of $T^{\an}_D$.
\end{proof}

 \medskip

\noindent
{\bf Proof of Theorem~\ref{theorem1}.}
We give a proof below in $(0,0)$-characteristic. For the small adaptation to mixed characteristic $(0,p)$, see Remark~\ref{mixed}.

First, observe that there is an $\emptyset$-definable isomorphism $\exp:(\mf{M},+)\to (1+\mf{M},.)$. This is given by the function symbol which is interpreted in the `standard' model $K_0$ by the usual power series function $f(X)=\Sigma_{i\geq 0} X^i/i!$. (Since the variable $X$ here ranges through the maximal ideal $\mf{M}$ rather than $\calO$,  so would be denoted by $\rho$ rather than $X$ in \cite{L}, there is indeed such a function symbol in $\calL^{\an}_D$.) We shall denote the inverse function $1+\mf{M} \to \mf{M}$, which is also $\emptyset$-definable, by $\log$.

Assume that $M$ is a small elementary submodel of $\Uu$, and let $m\in VF(M)$ with $\beta:=v(m)>0$. Define
$W:=\calO/\beta\mf{M}$, a definable set in $\Uu^{\eq}$. For $w\in W$ let $A_w$ be the corresponding coset of $\beta\mf{M}$. 
For any parameter set $C$, we shall say that $w$ is {\em generic} in $W$ over $C$ if there is no $\acl(C)$-definable proper sub-ball $B$ of $\calO$ with $A_w\subseteq B$. This agrees with the terminology of \cite[Definition 2.3.4]{HHM} for 1-torsors.
For any $\gamma\in \Gamma$, let $U_\gamma:=\{r\in RV: v(r)=\gamma\}$.
Let $E:(\beta\mf{M},+)\to (1+\mf{M},.)$ be the group isomorphism $E(x)=\exp(m^{-1}x)$. 

Choose $r$  in $RV$ with $\gamma:=v(r)$ infinite with respect to $M$, and choose 
$w\in W$ generic over $M\cup\{r\}$. Write $\bar{w}$ for ${\rm res}(x)$ where $x\in A_w$, and for $\alpha \in k$, let
$W_\alpha:=\{w\in W: \bar{w}=\alpha\}$.

For $a\in A_w$ and $b\in B_r$,
define the affine homomorphism $h=h_{a,b}:A_w \to B_r$, with homogeneous component $E$, by
$$h(x)=bE(x-a)=b\exp(m^{-1}(x-a)).$$
To prove the theorem, it suffices to  show that $\lceil h\rceil$ is not coded in $\calL^{\an}_{D,\G}$. First observe that $\lceil h\rceil\not\in \acl(M,w,r)$. For $w,r$ belong respectively to the strongly minimal sets $\red(a+\beta\calO)$ and $U_\gamma$, and by $C$-minimality arguments all elements of $\red(a+\beta \calO)\times U_\gamma$ realise the same type over $M$. Hence, by $C$-minimality again, if $b,b'\in B_r$ are distinct then
$\tp(a,b/M,w,r)=\tp(a,b'/M,w,r)$, whilst $h=h_{a,b}\neq h_{a,b'}$.

Suppose for a contradiction that there is a finite tuple in $\G$ which is a code for $\lceil h\rceil$ over $M$. 
Since $w,r\in \dcl(\lceil h \rceil)$, we may suppose this code
 includes 
$w$ and $r$, so we can write it as $(w, r, \be)$, where $\be$ is a tuple in $\G$. 
Let $\be=(\be_1,\be_2)$ where $\be_1$ is a tuple from $VF$ and $\be_2$ is a tuple from the geometric sorts other than $VF$.

\medskip

{\em Claim 1.} 
(i) $\be_2\in \acl(M,w,r,\be_1)$.

\qquad (ii) $\dim(\be_1/M)=1$.

{\em Proof of Claim.} (i) Suppose that $\be_2\not\in \acl(M,w,r,\be_1)$.

 Let $C' \subset K$ with 
$M \subset C'$ and  $w, r \in \dcl(C')$, such that there is an affine homomorphism
$g:A_w\to B_r$, also with homogeneous component $E$, defined over $C'$. 
Using an automorphism over $M,w,r$ if necessary, we may suppose that 
$\lceil h \rceil \not\in \acl(C')$.
Now $g(x)=b'\exp m^{-1}(x-a')$ for some 
 $a'\in A_w$, $b'\in B_r$. Then the function $\log(g/h):A_w\to \M$ is defined, and by properties of
 the exponential and logarithmic functions it satisfies 
$$
\log(g/h)(x) = \log(b'/b) + m^{-1}(a-a')= d\in \M.
$$
Hence, as $\lceil g\rceil\in\dcl(C')$ and  $g(x)=h(x)\exp(d)$, the element 
 $\lceil h \rceil$ is coded over $C'$ by the field element $d$. In particular, $d\not\in \acl(C')$.

We may choose $C'$ as above so that in addition $\be_2\not\in \acl(C',\be_1)$. Now $\lceil h\rceil$ is interdefinable over $C'$ with both $\be$ and $d$, so we find $d\in \dcl(C',\be_1,\be_2)\setminus \acl(C',\be_1)$. That is, there is a definable map from a product of sorts other than $VF$ to $VF$, with infinite range. This contradicts Lemma~\ref{finiteimage}.

(ii) 
We may choose $C'$ as in (i) so that $\dim(\be_1/M,w,r)=\dim(\be_1/C')$. Thus, as $\be_1\in \dcl(C',d)$, we have $\dim(\be_1/M,w,r)=1$. The result now follows from Lemma~\ref{finiteimage}.  

\smallskip

We  may now choose $e\in \bar{e}_1$ so that $\bar{e}_1$ is algebraic over $M \cup\{e\}$. The argument breaks into two cases.

\medskip
\noindent
{\em Case 1.} $\bar{w}\not\in \acl(M,e)$. Now by orthogonality of $k$ and $\Gamma$,
$\bar{w}\not\in \acl(M,e,v(r))$. 

{\em Claim 2.} Then $w\not\in \acl(M,e,r)$.

{\em Proof of Claim.} Suppose for a contradiction that $w\in \acl(M,e,r)$.   
Suppose that $\phi(x,y)$ is an $\calL^{\an,\eq}$-formula over $M \cup \{e\}$ such that $\phi(w,r)$ holds and $\phi(W,r)$ is finite.
By an easy $C$-minimality argument, there is $n_\phi$ such that for any $y\in U_\gamma$, if $\phi(W,y)$ is finite then it has at most $n_\phi$ elements. Let $\phi^*(x,y)$ be the formula
$\phi(x,y)\wedge |\phi(W,y)|\leq n_\phi$. Similarly, there are formulas $\bar{\phi}^*(z,y)$
and $\bar{\phi}(z,y)$ (with $z$ ranging over the residue field $k$, and $y$ over $RV$) such that $\bar{\phi}(k,r)$ is finite and contains $\bar{w}$, and $\bar{\phi}^*(\bar{w},r)$ holds, and such that $\bar{\phi}^*(z,y)$ expresses that $z\in k$ is algebraic over $M,e,y$ via $\bar{\phi}$.
As $k$ and $U_\gamma$ are strongly minimal and $\bar{w}\not\in \acl(M,e,\gamma)$, there are finitely many $r'\in U_\gamma$ such that $\bar{\phi}^*(\bar{w},r')$.  For $\alpha\in k$, let
$$S_\alpha:=\{x\in W: \exists y(\phi^*(x,y)\wedge {\rm res}(x)=\alpha\wedge \bar{\phi}^*(\alpha,y)) \}.$$
Then $S_{\bar{w}}$ is a finite non-empty subset of the infinite set $W_{\bar{w}}$ which is definable over $M,e,\gamma, \bar{w}$. Since $\bar{w}\not\in \acl(M,e,\gamma)$, 
$\bigcup_{\alpha\in k} S_\alpha$ is a definable subset of $W$ which is not a boolean combination of balls, contrary to $C$-minimality.

\smallskip
\noindent
{\em Case 2.} $\bar{w}\in \acl(M,e)$.

{\em Claim 3.} Then $r\not\in \acl(M,e,w)$.

{\em Proof of Claim.} 
By Lemma~\ref{weightone}, $\gamma\not\in \acl(M,e)$, and indeed, $\Gamma(M,e) = \Gamma(M)$. Thus, we may suppose $w\not\in \acl(M,e)$, 
as otherwise the claim follows immediately; for if $r\in \acl(M,e,w)=\acl(M,e)$ then $\gamma=v(r)\in \acl(M,e)$. We shall show that 
$\gamma\not\in\acl(M,e,w)$, which implies that $r\not\in\acl(M,e,w)$. So suppose that $\gamma\in\acl(M,e,w)$, so as $\Gamma$ is totally ordered
there is an $\acl(M,e)$-definable function $f$ with $f(w)=\gamma$. 

Let $B$ be the intersection of the closed $\Uu$-definable balls in $\calO$ containing $f^{-1}(\gamma)$. By o-minimality of $\Gamma$,
$B$ is a ball, of radius $\delta$, say, and is closed. Also, $\gamma$ is infinite with respect to $\Gamma(M,e)=\Gamma(M)$, but 
$\delta$ is bounded above in $\Gamma(M)$ by $\beta$. By Lemma~\ref{stablyembedded}, $\Gamma$ has the structure of a divisible ordered abelian 
group, and is stably embedded. Hence definable functions $\Gamma\to \Gamma$ are piecewise linear, so as $\delta\in \acl(M,e,\gamma)$, we have $\delta\in \acl(M,e)$. Also, $\lceil B \rceil \in \acl(M,e,\gamma)$.

Suppose first that $f^{-1}(\gamma)$ meets some element $s$ of $\red(B)$ in a non-empty  set which is not generic in $s$ over $M,e,\gamma,s$. Then let $B'$ be 
the smallest closed ball containing $f^{-1}(\gamma)\cap s$, and replace $B$ by $B'$. Note that $B'$ is still algebraic over 
$M,e,\gamma$, as there can be only finitely many such $s\in \red(B)$, by $C$-minimality. This process must terminate after
finitely many steps (again by $C$-minimality, and definability of $f^{-1}(\gamma)$), so for convenience we may suppose that for all $s\in\red(B)$, $f^{-1}(\gamma)\cap s$ is empty or generic in $s$ 
over $M,e,\gamma,s$.

If $\lceil B\rceil \in\acl(M,e)$, then as $\gamma\notin\acl(M,e)$, $f^{-1}(\gamma)$ meets just finitely many elements of $\red(B)$,
including some non-algebraic element. For each $s\in\red(B)$, define $F(s)$ to be the (fixed)  value of $f$ on generic elements of $s$. We obtain
a definable function $F:\red(B)\to \Gamma$ with infinite range, which contradicts Lemma~\ref{orthog}.

Thus we may assume that $\lceil B\rceil \notin\acl(M,e)$. For any set $S$ and value $\epsilon$, write $S/\epsilon$ for the set of 
closed sub-balls of $S$ of radius $\epsilon$.
It follows from $C$-minimality that there is a ball $B''$ containing $B$ and of radius $\delta''<\delta$, such that all elements of 
$B''/\delta$ have the same type. If $f^{-1}(\gamma)$ meets infinitely many elements of $\red(B)$, then $f^{-1}(\gamma)$ is a 
generic subset of $B$ (abusing notation -- we here view $B$ as a subset of $W$), and we have an induced function $F:B''/\delta \to \Gamma$ with $F(B)=\gamma$. Pick $\delta'\in \Gamma$
with $\delta'' < \delta' < \delta$. For each $B'\in B''/\delta'$, let 
$$
	S(B') = \{ F(B^*) : B^*\in B'/\delta\}.
$$
The sets $S(B')$ form a family of infinite uniformly definable pairwise disjoint subsets of $\Gamma$, which contradicts the o-minimality of $\Gamma$.

Thus we may also assume that $f^{-1}(\gamma)$ meets only finitely many elements of $\red(B)$, and meets each in a generic set.
As $B\notin \acl(M,e)$, all elements of $\red(B)$ are generic so have the same type  over $\acl(M,e,\lceil B\rceil)$. It follows that there is a definable function
$g:\red(B) \to \Gamma$, where $g(s)$ is the generic value of $f$ on $s$. Now $g$ has infinite range. As $\red(B)$ is in definable bijection with $k$, this contradicts 
Lemma~\ref{orthog}. 

\smallskip

To complete the proof of the theorem, suppose first that Case 1 holds. 
Choose $c'\in B_r$ generic over $\acl(M,e,r,w)$. Then by Claim 2,
$w\not\in\acl(M,e,r,c')$. Hence $A_w$ realises a single 1-type over $\acl(M,e,r,w,c')$ by Proposition~\ref{uniquetype} (ii), and in particular 
$A_w\cap \acl(M,e,r,w,c')=\emptyset$. But as  $\lceil h \rceil\in \acl(M,e,r,w)$,
$h^{-1}(c')\in \acl(M,e,r,w,c')\cap A_w$, which is a contradiction.

If Case 2 holds,
 we argue as in Case 1, with $r$ and $w$ reversed. Choose  $c$
generic  in $A_w$ over $\acl(M,e,w,r)$. Then by  Claim 3, $r\not\in \acl(M,e,w,c)$, so the elements of $B_r$ all have the 
same type over $\acl(M,e,w,r,c)$ by Proposition~\ref{uniquetype} (i), so $B_r \cap \acl(M,e,w,r,c)=\emptyset$. This however is impossible, as  
$\lceil h\rceil \in \acl(M,e,w,r)$, and $h(c)\in B_r$. \hfill $\Box$

\begin{remark}\label{mixed}\rm
For the case when $K_0$ has mixed characteristic $(0,p)$, that is, ${\rm char}(k)=p$, a slight adaptation of the definition of the 
exponential function is needed. First, suppose $p>2$. Then the power series $f(X)=\Sigma_{i\geq 0} \frac{p^i}{i!}X^i$ is defined on 
$\calO$ and is a power series in the valuation ring sort in the ring of separated power series in the sense of Lipshitz. This gives a 
definable isomorphism $\exp:(p\calO,+)\to (1+p\calO,.)$, given by
$\exp(px)=f(x)$. The argument now proceeds as above, with $\beta>1=v(p)$. In the case $p=2$, the function $f$ has form
$f(X)=\Sigma_{i\geq 0}\frac{p^{2i}}{i!}X^i$, and defines an isomorphism $\exp: (p^2\calO,+)\to (1+p^2\calO,.)$ given by 
$\exp(p^2x)=f(x)$.
\end{remark}

\section{The real closed case.}

 As in the Introduction, we let $\bar{{\mathbb R}}$ denote a polynomially bounded o-minimal expansion of the real field, in  a language $\bar{\calL}$, in which restricted exponentiation is definable. Let
$T=\Th(\bar{{\mathbb R}})$, $\bar{F}$ a non-archimedean model of $T$, $V$ the convex subring of 
$\bar{F}$ consisting of the finite elements, and $\mu$ its ideal of infinitesimals. We view $(\bar{F},V)$ 
as a structure (a model of $T_{\con}$) in the language $\bar{\calL}_{\con}=\bar{\calL}\cup\{P\}$, where $P$ is a unary predicate interpreted here by $V$.
It is shown in \cite{DL} that $T_{\con}$ is  weakly o-minimal; in fact, weak o-minimality follows from \cite{BP}, where it is shown that any expansion of an o-minimal structure by a predicate for a convex subset has weakly o-minimal theory. Furthermore, by \cite[(3.10)]{DL}, if $T$ has quantifier elimination and is universally axiomatised, then $T_{\con}$ has quantifier elimination.

As in Section~2, $\Uu$ denotes a large saturated model of $T_{{\rm con}}$, whose underlying field has domain $K$.
Now exponentiation is defined in $\bar{{\mathbb R}}$, and hence in $\bar{F}$, on any set of
 form $[-n,n]$ for $n\in {\mathbb N}^{>0}$. Furthermore, if $x\in \mu$ then $\exp(x)\in 1+\mu$,
 so in $\Uu$, $\exp(\mf{M})\subseteq 1+\mf{M}$. In fact, the restriction $\exp|_{\mf{M}}:\mf{M}\to 1+\mf{M}$ is bijective,
 with inverse the map $\log|_{1+\mf{M}}$. 

The proof of Theorem~\ref{theorem1}, with the same imaginary $\lceil h \rceil$, yields also a proof of Theorem~\ref{theorem2}, and below we only pay attention to points of difference. First, we give an analogue of Lemma~\ref{exchange}.
\begin{lemma} \label{exreal}
(i) Let $C\subset \Uu\models T_{\con}$, and $b\in \acl(C)$. Then $b$ is algebraic over $C$ in the reduct of $\Uu$ to $\bar{\calL}$.

(ii) Algebraic closure has the exchange property in models of $T_{\con}$.
\end{lemma}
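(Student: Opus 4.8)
The plan is to deduce both statements from the known structure theory of weakly o-minimal expansions of o-minimal structures by a convex predicate, essentially transferring the o-minimal exchange property through the reduct map. For part (i), the key point is that $T_{\con}$ is weakly o-minimal (by \cite{BP}), and in such theories the algebraic closure of a set $C$ in the field sort is controlled by the underlying o-minimal reduct: any $\bar{\calL}_{\con}$-definable finite set containing $b$ can, using quantifier elimination for $T_{\con}$ and the fact that $P$ is interpreted by a convex set, be cut out by $\bar{\calL}$-conditions together with membership conditions of the form ``lies in a given $\bar{\calL}$-definable convex set'', and the latter cannot isolate a point not already $\bar{\calL}$-algebraic over $C$. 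Concretely, I would take a quantifier-free $\bar{\calL}_{\con}$-formula $\phi(x,\bar c)$ with $\phi(\Uu,\bar c)$ finite and $b\in\phi(\Uu,\bar c)$; the atomic subformulas are either $\bar{\calL}$-atomic or of the form $P(t(x,\bar c))$ for an $\bar{\calL}$-term $t$. The set where the $P$-conditions are ``frozen'' (i.e.\ the $\bar{\calL}$-locus on which each $t(x,\bar c)$ stays on one side of the cut defining $V$) is an $\bar{\calL}(\bar c)$-definable set, a finite union of intervals and points, and $b$ lies in the intersection of $\phi(\Uu,\bar c)$ with the connected component of this locus containing $b$; since $\phi(\Uu,\bar c)$ is finite, on that component $\phi$ reduces to a finite $\bar{\calL}(\bar c)$-definable set, so $b$ is $\bar{\calL}$-algebraic over $C$.

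For part (ii), I would then simply invoke exchange for $\acl$ in the o-minimal reduct $\bar{\calL}$ of $\Uu$ (o-minimal structures are geometric, so $\acl$ has exchange), together with part (i): if $b\in\acl_{\bar{\calL}_{\con}}(Ca)\setminus\acl_{\bar{\calL}_{\con}}(C)$, then by (i) $b\in\acl_{\bar{\calL}}(Ca)$ and $b\notin\acl_{\bar{\calL}}(C)$, so by o-minimal exchange $a\in\acl_{\bar{\calL}}(Cb)\subseteq\acl_{\bar{\calL}_{\con}}(Cb)$, as required.

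The main obstacle is making the reduction in part (i) rigorous: I need the precise form of quantifier elimination available for $T_{\con}$ and a clean statement that the only way the predicate $P$ enters a quantifier-free formula is through $\bar{\calL}$-terms, so that on each $\bar{\calL}$-definable ``cell'' where the truth values of all relevant $P(t(x))$ are constant, the formula becomes $\bar{\calL}$-definable. One subtlety is that $\bar{\calL}$ may not itself have quantifier elimination, so ``$\bar{\calL}$-atomic'' should be read as ``$\bar{\calL}$-formula'' and one should argue with $\bar{\calL}$-definable sets rather than literal atomic formulas; this is harmless since the claim is only about $\acl$. I would organize the write-up so that the combinatorial lemma ``a quantifier-free $\bar{\calL}_{\con}$-formula is, on each component of the frozen locus, equivalent to an $\bar{\calL}$-formula'' is stated first, and then both (i) and (ii) follow in a few lines.
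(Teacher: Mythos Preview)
Your argument for part (ii) is correct and is exactly the paper's argument. Part (i) has the right overall shape---reduce via quantifier elimination to a conjunction of $\bar{\calL}$-conditions and conditions of the form $P(t(x,\bar c))$, then argue that the $P$-conditions cannot isolate $b$---but the specific step you propose does not go through.

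The gap is your claim that the ``frozen locus'' (the set on which each $t_i(x,\bar c)$ stays on one side of the cut defining $V$) is $\bar{\calL}(\bar c)$-definable. It is not: already for $t(x)=x$, the locus where $P(t(x))$ holds is $V$ itself, which is precisely the new predicate and is not $\bar{\calL}$-definable. More generally, a continuous monotone $\bar{\calL}(\bar c)$-term can pass from $V$ to its complement at a cut which is not $\bar{\calL}$-definable over $\bar c$ (indeed not realised in $\Uu$ at all), so the connected component of your frozen locus containing $b$ need not have $\bar{\calL}(\bar c)$-definable endpoints, and restricting $\phi$ to that component does not produce an $\bar{\calL}(\bar c)$-definable set.

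The paper's fix is local rather than global, and is close to what you were reaching for. After arranging quantifier elimination (expand $\bar{\calL}$ by definitions so that $T$ is universal with QE, then invoke \cite[3.10]{DL} for $T_{\con}$), write $\phi$ as
\[
\psi(x,\bar a)\ \wedge\ \bigwedge_{i} \bigl(t_i(x,\bar a)\in\calO\bigr)\ \wedge\ \bigwedge_{j}\bigl(t_j'(x,\bar a)\notin\calO\bigr)
\]
with $\psi$ an $\bar{\calL}$-formula. If $\psi(\Uu,\bar a)$ is finite we are done; otherwise we may take it to be an open interval. The finitely many points at which some $t_i$ or $t_j'$ is discontinuous are $\bar{\calL}$-algebraic over $\bar a$ by o-minimality, so we may assume all terms are continuous at $b$. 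Since $V$ is clopen in the order topology, each condition $t_i\in\calO$ and $t_j'\notin\calO$ then holds on an open neighbourhood of $b$, so $\phi(\Uu,\bar a)$ contains an open interval around $b$, contradicting finiteness. This is what your ``frozen locus'' was meant to deliver, but carried out pointwise at $b$ using continuity and the clopenness of $V$, rather than by an (unavailable) global $\bar{\calL}$-definable partition.
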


\begin{proof} (i) First observe that $T$, being the theory of an o-minimal expansion of an ordered field, has definable Skolem functions. Hence, by extending the language $\bar{\calL}$ by definitions, we may suppose that $T$ is universally axiomatised and has quantifier elimination. Thus,
 by \cite[3.10]{DL}, $T_{\con}$ has quantifier elimination, and so,  there is a quantifier-free formula $\phi(x,\bar{y})$ over $\bar{\calL}_{\con}$, and $\bar{a}\in C^{l(\bar{y})}$, such that
 $\phi(\Uu,\bar{a})$ is finite and contains $b$. We may suppose that $\phi(x,\bar{y})$ is a conjunction of atomic and negated atomic formulas, and in particular that it has the form
 $$\psi(x,\bar{y}) \wedge \bigwedge_{i=1}^s (t_i(x,\bar{y})\in \calO)
 \wedge \bigwedge_{i=1}^{s'} (t_i'(x,\bar{y})\not\in \calO),$$
where $\psi(x,\bar{y})$ is a quantifier-free $\bar{\calL}$-formula and the $t_i,t_i'$ are terms.
We may also suppose that $\psi(\Uu,\bar{a})$ is infinite, and indeed that $\psi(\Uu,\bar{a})$ is an open interval, and that the $t_i(x,\bar{a})$ and $t_i'(x,\bar{a})$ are continuous at $b$ (since the points of discontinuity will be algebraic over $\bar{a}$ in the  reduct to $\bar{\calL}$, by o-minimality). However, it is now impossible that $\phi(\Uu,\bar{a})$ is finite, by continuity of the $t_i,t_i'$. 

(ii) This follows immediately from (i), as algebraic closure has the exchange property in o-minimal theories. 
\end{proof}

Lemma~\ref{finiteimage} and Proposition~\ref{uniquetype} go through unchanged in the current setting. Our analogue of 
Lemma~\ref{stablyembedded} is the following.

\begin{lemma}\label{strongmin2}
In any model  of $T_{\con}$, the following hold.

(i) The residue field, equipped with the induced $\emptyset$-definable relations of $T_{\con}$, is an o-minimal structure and is stably embedded.

(ii) The value group $\Gamma$, equipped with the induced $\emptyset$-definable relations of $T_{\con}$, is an o-minimal structure and is stably embedded, and has the structure of an ordered vector space over an archimedean ordered field, its `field of exponents'. 
\end{lemma}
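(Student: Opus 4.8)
\medskip\noindent\emph{Proof plan.}
The plan is to lean on the van den Dries--Lewenberg theory of $T$-convex valued fields together with a quantifier elimination, exactly in parallel with the treatment of Lemma~\ref{stablyembedded}. First I would, as in the proof of Lemma~\ref{exreal}, adjoin definitions for Skolem functions so that $T$ is universally axiomatised with quantifier elimination; this does not alter the class of definable sets, hence not the induced structures on $k$ and $\Gamma$, and by \cite[(3.10)]{DL} it makes $T_{\con}$ eliminate quantifiers in $\bar{\calL}_{\con}$. Next one notes that $V$ is $T$-convex: it is a convex subring and, since $T$ is polynomially bounded, is closed under every continuous $\emptyset$-definable function of $T$ (the unary case of the observation in the Introduction). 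The $T$-convexity machinery of \cite{DL} then supplies the two intrinsic structures: the residue field $k$, with the reductions of the $\emptyset$-definable functions of $T$, is a model of $T$; and the value group $\Gamma$, with its induced structure, carries the structure of an ordered vector space over the field of exponents $K(T)$ of $T$. Since $T$ is polynomially bounded, $K(T)$ is archimedean by \cite{MILLER}. In particular $k$ and $\Gamma$ are o-minimal in these intrinsic structures --- for $\Gamma$ because an ordered vector space over an ordered field is o-minimal --- which already gives the ``ordered vector space'' assertion in (ii).

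It then remains to show that the induced structure of $T_{\con}$ on $k$ (resp.\ $\Gamma$) coincides with its intrinsic $T$-structure (resp.\ ordered $K(T)$-vector space structure); this delivers simultaneously stable embeddedness and o-minimality of the induced structure. For $\Gamma$ this is the argument of Lemma~\ref{stablyembedded}(ii): by the quantifier elimination for $T_{\con}$ it is enough to see that each atomic $\bar{\calL}_{\con}$-formula defining a subset of $\Gamma^n$ is, over the same parameters, equivalent to a quantifier-free formula in the language of ordered $K(T)$-vector spaces, and this follows from Miller's asymptotic description \cite{MILLER} of the $\emptyset$-definable functions of a polynomially bounded o-minimal structure together with piecewise monotonicity. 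For $k$ one argues similarly: a $\Uu$-definable $X\subseteq k^n$ pulls back under the residue map to a union of cosets of $\mf{M}^n$ in $V^n$ defined by a Boolean combination of $\bar{\calL}$-formulas and conditions ``$t(\bar x)\in\calO$''; a condition ``$t(\bar x)\in\calO$'' is constant on each coset of $\mf{M}^n$, and the $\bar{\calL}$-part is analysed via the $T$-convexity compatibility of the residue map with continuous $\emptyset$-definable functions, so that $X$ becomes definable in the intrinsic $T$-structure of $k$, with the parameters replaced by residue data lying in $k$.

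The main obstacle is precisely this last step --- establishing that the induced structure is no richer than expected. The quantifier elimination reduces it to a finite but genuinely delicate bookkeeping, namely tracking how the conditions ``$t\in\calO$'' and the $\bar{\calL}$-atomic formulas behave on cosets of $\mf{M}^n$ and on $\Gamma^n$, and it is here that polynomial boundedness is essential: without it the value group would acquire extra induced structure (morally, a trace of unrestricted exponentiation), and assertion (ii) would fail.
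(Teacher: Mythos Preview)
Your broad strategy---invoking the van den Dries--Lewenberg $T$-convexity machinery for the intrinsic structures on $k$ and $\Gamma$, together with Miller's dichotomy for the archimedean field of exponents---matches the paper's route, which simply cites \cite[Theorem~A, Corollary~1.12]{D}, \cite[4.5]{DL}, and \cite[Theorem~B, (3.1)]{D} for exactly these facts.

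The substantive divergence is in how stable embeddedness is obtained. The paper does not attempt a direct quantifier-elimination analysis; instead, having established that the $\emptyset$-induced structure on each of $k$ and $\Gamma$ is o-minimal, it invokes the general results of Hasson--Onshuus \cite[Theorem~2]{ho} or Pillay \cite{pill}, which say that a definable set carrying an o-minimal induced structure inside an ambient NIP theory is automatically stably embedded. This is considerably shorter than what you sketch.

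Your direct argument has a genuine gap. For the residue field, the claim that a condition ``$t(\bar x)\in\calO$'' is constant on each coset of $\mf{M}^n$ is false once parameters from $\Uu$ are allowed: take $t(x)=x/c$ with $c$ infinitesimal, and look at the coset $0+\mf{M}$. The preimage of $X\subseteq k^n$ is indeed a union of $\mf{M}^n$-cosets, but the individual atomic constituents of its quantifier-free description need not be, so the bookkeeping cannot proceed term-by-term as you describe. The actual argument in \cite{D} is more delicate (it uses the valuation inequality and a careful treatment of how definable functions interact with the standard part map), and is precisely what the paper is citing rather than reproducing. Similarly, for $\Gamma$, the parallel with Lemma~\ref{stablyembedded}(ii) is imperfect: there the value group is a genuine sort in the language of \cite{L}, whereas here $\bar{\calL}_{\con}$ is one-sorted with $\Gamma$ living in $\Uu^{\eq}$, so there are no atomic formulas in value-group variables to inspect directly, and one must instead work with preimages under $v$. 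This can be made to work, but again the clean route is via \cite{ho} or \cite{pill}.
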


\begin{proof} 
By \cite[Theorem A, Corollary 1.12]{D}, the structure induced on the residue field is o-minimal (and elementarily equivalent to $\bar{{\mathbb R}}$).  
For the o-minimality of the value group, see \cite[4.5]{DL}. For the description of its structure, see Theorem B and (3.1) of \cite{D}. 

In both cases, stable embeddedness follows from \cite[Theorem 2]{ho}, or from the main theorem of \cite{pill}.
\end{proof}

\begin{lemma}\label{orthog2}
In any model of $T_{\con}$, the residue field and value group are orthogonal in the sense of Proposition~\ref{orthog}.
\end{lemma}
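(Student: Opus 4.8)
The plan is to mimic the proof of Proposition~\ref{orthog}, replacing the use of strong minimality of $k$ by the observation that $k$ and $\Gamma$ carry incompatible kinds of o-minimal structure. Working in $\Uu\models T_{\con}$: by Lemma~\ref{strongmin2} the structure induced on $k$ is o-minimal, stably embedded, and expands a real closed field, while the structure induced on $\Gamma$ is o-minimal, stably embedded, and is (an expansion by constants of) an ordered vector space over an ordered field --- in particular it is \emph{linear}. Exactly as in Proposition~\ref{orthog}, a failure of orthogonality yields either an $M$-definable partial function $f\colon k\to\Gamma$ with infinite image (arising from some $\delta\in\Gamma(\acl(M\alpha))\setminus\Gamma(M)$ with $\alpha\in k$) or an $M$-definable partial function $g\colon\Gamma\to k$ with infinite image (arising from some $\beta\in k(\acl(M\gamma))\setminus k(M)$ with $\gamma\in\Gamma$); here one uses that $k$ eliminates imaginaries, being an o-minimal expansion of an ordered field, and that finite subsets of $\Gamma$ are coded, $\Gamma$ being linearly ordered, so that a finite image would place the element in $\dcl(M)$.

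From $f$ or $g$, applying the Monotonicity Theorem and cell decomposition inside the o-minimal sorts $k$ and $\Gamma$ and using stable embeddedness, one extracts an interval $I\subseteq k$, an interval $J\subseteq\Gamma$, and an $\Uu$-definable bijection $\theta\colon I\to J$; it remains to rule this out. Composing $\theta$ with an $\Uu$-definable bijection between $k$ and the interval $I$ (which exists by o-minimality of $k$), we obtain an $\Uu$-definable bijection $\eta\colon k\to J$, and transporting the real closed field structure of $k$ along $\eta$ makes $J$ into a real closed field whose operations are $\Uu$-definable. By stable embeddedness of $\Gamma$ (Lemma~\ref{strongmin2}(ii)) these operations are already definable in the structure induced on $\Gamma$; but a linear o-minimal structure does not interpret an infinite field, a contradiction. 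Since the resulting statement is an $\acl$-statement following from first-order properties of $T_{\con}$, it holds in every model.

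I expect the crux to be the final step, namely that the linear structure $\Gamma$ cannot define a real closed field on a definable set. One can cite the known dichotomy between linear and field-type o-minimal structures; alternatively one argues directly, using that every definable function in an ordered vector space is piecewise affine, so that the transported squaring function on a subinterval of $J$ would be piecewise affine for the ambient order on $\Gamma$ while being strictly convex for the field order --- a little care being needed to relate the two orders on the interval, which can be handled by standard o-minimality. The reduction to the nonexistence of $\theta$ is, by contrast, routine given Lemma~\ref{strongmin2}.
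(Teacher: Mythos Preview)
Your argument is correct, but it takes a different route from the paper. The paper's proof is a one-line citation: orthogonality of $k$ and $\Gamma$ in $T_{\con}$ is exactly Proposition~5.8 of van den Dries's \emph{$T$-convexity and tame extensions II}. You instead give a direct argument from Lemma~\ref{strongmin2}: reduce a failure of orthogonality to a $\Uu$-definable bijection between an infinite definable subset of $k$ and one of $\Gamma$, transport the real closed field structure of $k$ across, and use stable embeddedness of $\Gamma$ together with the fact that an ordered vector space (a linear o-minimal structure) cannot define an infinite field. This is a genuinely different, more self-contained proof that makes explicit why the two sorts cannot interact. Two minor comments: first, you do not actually need $J$ to be an \emph{interval} in $\Gamma$ --- an injection from an interval $I\subseteq k$ (obtained via stable embeddedness of $k$, elimination of imaginaries in $k$, and monotonicity) onto any infinite $\Gamma$-definable set suffices to transport the field structure, so the back-and-forth you allude to in extracting a bijection of intervals can be dispensed with. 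Second, as you anticipate, the crux is that the induced structure on $\Gamma$ is that of a pure ordered vector space over its field of exponents, in which every definable unary function is piecewise affine; a definable real closed field on a definable subset would give a definable squaring map, and a short o-minimality argument (finding a subinterval on which the field order agrees with the ambient $\Gamma$-order) then yields a contradiction. Your sketch of this step is adequate; alternatively one may cite the linear/nonlinear dichotomy for o-minimal structures.
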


\begin{proof} This follows from \cite[Proposition 5.8]{D}. 
\end{proof}

\begin{lemma} \label{model}
Let $C=\acl(C)\cap K$, and suppose there is $c\in C$ with $v(c)\neq 0$. Then $C\models T_{\con}$.
\end{lemma}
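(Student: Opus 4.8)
The goal is to show that if $C = \acl(C)\cap K$ contains some element $c$ with $v(c)\neq 0$, then $C$, viewed as an $\bar{\calL}_{\con}$-structure, is a model of $T_{\con}$. Since $T_{\con}$ has quantifier elimination once we have expanded $\bar{\calL}$ by definable Skolem functions (by \cite[3.10]{DL}, as recalled in the proof of Lemma~\ref{exreal}), it suffices to check that $C$ is an existentially closed substructure of $\Uu$; equivalently, by the Tarski--Vaught test together with QE, that $C$ is closed under the relevant operations and that every quantifier-free $\bar{\calL}_{\con}$-formula with parameters in $C$ that is satisfiable in $\Uu$ is already satisfiable in $C$. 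First I would record that $C$, being algebraically closed in the field sort, is a subfield of $K$ closed under all $\emptyset$-definable functions of $T$ (in particular under restricted exponentiation and all $\bar{\calL}$-operations); this is immediate from $\acl(C)=C$ and the fact (via Skolem functions) that definable closure and algebraic closure for the field sort can be computed in the reduct to $\bar{\calL}$ by Lemma~\ref{exreal}(i).

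The substantive point is the interaction with the predicate $P$, interpreted by the valuation ring $V=\calO$. I would argue as follows. Using QE for $T_{\con}$, a quantifier-free $\bar{\calL}_{\con}$-formula over $C$ in one variable $x$ has the shape displayed in the proof of Lemma~\ref{exreal}(i): a quantifier-free $\bar{\calL}$-condition $\psi(x)$ over $C$, conjoined with finitely many conditions $t_i(x)\in\calO$ and $t_j'(x)\notin\calO$ for $\bar{\calL}$-terms $t_i,t_j'$ over $C$. Since the reduct of $\Uu$ to $\bar{\calL}$ is an o-minimal expansion of an ordered field, and since $C$ (as a pure $\bar{\calL}$-structure) is an elementary substructure of that reduct — here is where one uses that $C=\acl(C)$ in the field sort and that o-minimal expansions of ordered fields have definable Skolem functions, so $C$ is a model of $T$ — the $\bar{\calL}$-part $\psi(\Uu)$ is a finite union of points of $C$ and open intervals with endpoints in $C\cup\{\pm\infty\}$, and similarly each $t_i,t_j'$ is given piecewise by $\bar{\calL}$-terms that are continuous and monotone on $C$-definable intervals. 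On each such interval $I$ (with endpoints in $C$), the condition $t_i(x)\in\calO$ cuts out a convex subset of $I$ whose endpoints lie in $\acl_{\bar{\calL}}(C, \text{(boundary of } v(t_i)=0\text{)})$; the key extra ingredient is that $v$ restricted to the $C$-definable monotone branch of $t_i$ is determined, and its fibres over $0$ and its preimages of $\calO$, $\mf M$ are controlled by the value group $\Gamma$, which by Lemma~\ref{strongmin2} is o-minimal and stably embedded. Thus the full solution set of the quantifier-free formula in $\Uu$ is a finite union of convex subsets of $K$ with endpoints described over $C$, and if it is non-empty it contains a point of $C$: here the hypothesis that some $c\in C$ has $v(c)\neq 0$ is exactly what guarantees that $C$ is cofinal and coinitial in every such "valuation-theoretic" convex piece — i.e.\ $\Gamma(C)\neq 0$, so $C$ is not contained in $V$ nor in $\mf M$, and one can scale by powers of $c$ to land inside any ball or annulus that is defined over $C$ and non-empty in $\Uu$.

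Concretely, the endgame is: given the convex solution set $S\subseteq K$ non-empty, either $S$ has an endpoint in $C$ (then take that endpoint or a point of $C$ just inside, using that $C$ is a dense ordered field), or $S$ is an annulus $\{x : \gamma_1 \bowtie v(x-a) \bowtie \gamma_2\}$ for $a\in C$ and $\gamma_i$ in $\Gamma(C)$ (using stable embeddedness of $\Gamma$ and that $\Gamma(C)\neq 0$ by the hypothesis on $c$), in which case $a + c^k$ for a suitable $k\in\bZ$ lies in $S$ and in $C$. This handles the one-variable case; the general case of closure under existential quantifiers follows by the standard induction on the number of quantifiers, at each stage using QE to reduce a formula with parameters in $C$ to the one-variable form just analyzed. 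I expect the main obstacle to be the bookkeeping in this last step — making precise that every non-empty $C$-definable "cheese" in $K$ meets $C$, which amounts to showing $C$ is $v$-dense in $K$ in the appropriate sense — but this is precisely where the hypothesis $v(c)\neq 0$ for some $c\in C$ does the work, since it forces $\Gamma(C)$ to be a nontrivial (hence, by o-minimality and stable embeddedness of $\Gamma$, cofinal and coinitial-up-to-sign) subgroup of $\Gamma$, while residue-field density comes for free from $C$ being an elementary $\bar{\calL}$-substructure with infinite, hence dense, residue field.
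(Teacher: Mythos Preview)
Your approach is genuinely different from the paper's, and considerably more laborious. The paper's proof is three lines: since $T$ has definable Skolem functions and $C=\acl(C)\cap K$, the $\bar{\calL}$-reduct of $C$ is a model of $T$; the hypothesis $v(c)\neq 0$ ensures that $\calO\cap C$ is a \emph{proper} $T$-convex subring of $C$; and then one simply quotes \cite[3.13]{DL}, which says that the theory of models of $T$ expanded by a proper $T$-convex valuation ring is complete. Hence $(C,\calO\cap C)\equiv(\Uu,\calO)$, so $C\models T_{\con}$.

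You instead try to show $C\prec\Uu$ directly via Tarski--Vaught plus quantifier elimination. This is a legitimate strategy, and the target is correct (it follows a posteriori from the paper's argument together with model completeness). But your execution has real gaps. The dichotomy in your ``endgame'' --- either $S$ has an endpoint in $C$, or $S$ is an annulus $\{x:\gamma_1\bowtie v(x-a)\bowtie\gamma_2\}$ with $a\in C$ and $\gamma_i\in\Gamma(C)$ --- is not established and is not obviously true: the cuts arising from conditions $t_i(x)\in\calO$ need not be centred at points of $C$ with radii in $\Gamma(C)$, and a convex piece can mix a $C$-rational endpoint with an irrational cut. Your appeal to ``stable embeddedness of $\Gamma$'' does not by itself force the relevant $\gamma_i$ into $\Gamma(C)$. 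The remark about ``induction on the number of quantifiers'' is also unnecessary: Tarski--Vaught is already a one-variable criterion.

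In effect, what you are sketching is a re-proof of the completeness theorem of \cite{DL} in the special case at hand. That can be made to work, but it would require a careful cell-type analysis of one-variable $\bar{\calL}_{\con}$-definable sets and a genuine density argument; the sketch you give does not supply this. The paper's route --- verify the hypotheses of \cite[3.13]{DL} and cite completeness --- is both shorter and avoids these difficulties entirely.
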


\begin{proof} Since $T$ has definable Skolem functions, the reduct $C|_{\bar{\calL}}$ of $C$ to $\bar{\calL}$ is a model of $T$. By \cite[3.13]{DL}, the theory of expansions of models of $T$ by a predicate for a proper convex valuation ring closed under $\emptyset$-definable continuous functions is complete. It follows that $(C|_{\bar{\calL}}, \calO\cap C)\models  T_{\con}$. 
\end{proof}

We now give  analogues of Lemmas~\ref{closedgeneric} and \ref{weightone}.

\begin{lemma}\label{closedgeneric2}
Let $C$ be a parameter set, $s$ a $C$-definable closed ball, and let $a\in K$ be an element of $s$ which is not in any  $C$-definable
 proper sub-ball of $s$. Then $\Gamma(C)=\Gamma(C,a)$.
\end{lemma}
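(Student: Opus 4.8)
The plan is to mimic the proof of Lemma~\ref{closedgeneric} from Section~2, transporting it to the weakly o-minimal setting of $T_{\con}$. Recall that Lemma~\ref{closedgeneric2} asserts: if $s$ is a $C$-definable closed ball and $a\in s$ lies in no $C$-definable proper sub-ball of $s$, then $\Gamma(C)=\Gamma(C,a)$. The key structural inputs available are weak o-minimality of $T_{\con}$, the fact that definable unary sets in $K$ are finite unions of convex pieces (Swiss cheeses in the valued-field sense), the orthogonality of residue field and value group (Lemma~\ref{orthog2}), and the stable embeddedness and o-minimality of $\Gamma$ (Lemma~\ref{strongmin2}(ii)).

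First I would suppose for a contradiction that some $\delta\in\Gamma(C,a)\setminus\Gamma(C)$. Then there is a $C$-definable partial function $g:K\to\Gamma$ with $g(a)=\delta$ and infinite image. I would then analyze the behaviour of $g$ on the ball $s$: by weak o-minimality applied fibrewise (and the cell-decomposition / monotonicity available in this setting via \cite{DL}), after passing to a $C$-definable sub-ball or to the reduction $\red(s)$ one can arrange that $g$ is ``locally constant'' or monotone on a suitable structure attached to $s$. The crucial point is that $s$ is a closed ball, so $\red(s)$ is in parameter-definable bijection with the residue field $k$; if $g$ were non-constant on a generic set of sub-balls we would obtain a definable injection of an infinite subset of $k$ into $\Gamma$, contradicting orthogonality (Lemma~\ref{orthog2}). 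Alternatively, if $g$ takes infinitely many values on a single open sub-ball we feed this into the o-minimality/stable embeddedness of $\Gamma$ to get a contradiction, exactly as in the argument for Claim~3 in the proof of Theorem~\ref{theorem1}. The hypothesis that $a$ lies in no proper $C$-definable sub-ball of $s$ is used precisely to force $a$ into the ``generic'' part where $g$ must be constant, yielding $\delta\in\Gamma(C)$.

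In more detail, the steps in order are: (1) assume $\delta=g(a)\in\Gamma(C,a)\setminus\Gamma(C)$ with $g$ $C$-definable; (2) intersect the domain with $s$ and use weak o-minimality to decompose $s$ into the finitely many $C$-definable sub-balls on which $g$ is ``simple'', noting $a$ avoids all proper $C$-definable sub-balls so lands in the generic stratum; (3) on this generic stratum, show $g$ must be constant, because otherwise the induced map on $\red(s)\cong k$ (or on a 1-torsor of sub-balls) has infinite image into $\Gamma$, contradicting Lemma~\ref{orthog2}, while an infinite image on a fixed open sub-ball contradicts o-minimality and stable embeddedness of $\Gamma$ (Lemma~\ref{strongmin2}(ii)); (4) conclude $\delta$ equals this constant value, which is $C$-definable, so $\delta\in\Gamma(C)$, a contradiction.

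The main obstacle I anticipate is step (2)–(3): in the weakly o-minimal setting one does not have genuine cell decomposition for $K$-valued definable functions in the clean form available for ACVF, so one must be careful about how a definable map $K\to\Gamma$ behaves on a closed ball — in particular ruling out that it oscillates infinitely often across the sub-balls of $s$. This is handled by invoking the weak o-minimality of $T_{\con}$ to bound the number of convex pieces and the orthogonality statement (Lemma~\ref{orthog2}) to kill any residual infinite variation indexed by the residue field; it may also be convenient to first reduce to the case where $C=\acl(C)\cap K$ contains an element of nonzero valuation (using Lemma~\ref{model} so that $C$ is itself a model, simplifying the genericity bookkeeping), or simply to cite that the proof of \cite[Lemma 2.5.5]{HHM} adapts, since that argument uses only weak structural facts about definable subsets of a closed ball that now follow from weak o-minimality together with Lemmas~\ref{strongmin2} and~\ref{orthog2}.
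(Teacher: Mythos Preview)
Your strategy is in the right spirit but is considerably more elaborate than what the paper actually does, and step~(3) contains a gap. The paper's proof is four lines and involves no decomposition of $s$ whatsoever: given $\gamma=f(a)\in\Gamma(C,a)\setminus\Gamma(C)$ with $f$ $C$-definable, one simply lets $u\in\red(s)$ be the maximal open sub-ball of $s$ containing $a$, sets $T_u:=f(u)\subseteq\Gamma$, notes (by o-minimality of $\Gamma$) that $T_u$ is a finite union of points and open intervals, and observes that a boundary point of $T_u$ lies in $\Gamma(C,\lceil u\rceil)\setminus\Gamma(C)$. Since $\lceil u\rceil$ is interdefinable over $C$ with a residue-field element, this directly contradicts Lemma~\ref{orthog2}. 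No global decomposition of $s$, no monotonicity or cell decomposition for maps $K\to\Gamma$, and none of the weak-o-minimality bookkeeping you anticipate is needed; the single passage to $\red(s)$ already puts everything into the $k$-versus-$\Gamma$ orthogonality framework.

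Your step~(3) asserts that ``an infinite image on a fixed open sub-ball contradicts o-minimality and stable embeddedness of $\Gamma$,'' but this is not justified: a $C$-definable map from a ball to $\Gamma$ can certainly have infinite image (think of $x\mapsto v(x-c)$), and neither o-minimality nor stable embeddedness of $\Gamma$ alone forbids this. You would need an additional argument here---essentially the one the paper compresses into its boundary-point observation---to rule out this case. The paper's localisation to the single sub-ball $u$ and the appeal to orthogonality via $\lceil u\rceil\in k$ is exactly the missing ingredient, and once you have it the rest of your scaffolding (global decomposition, passing to a model via Lemma~\ref{model}, citing \cite[Lemma~2.5.5]{HHM}) becomes unnecessary.
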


\begin{proof} Suppose $\gamma\in \Gamma(C,a)\setminus \Gamma(C)$, so there is a $C$-definable function $f$ with $f(a)=\gamma$. 
Write $\red(s)$ for the set of open sub-balls of $s$ of the same radius as $s$. Let $u$ be the element of $\red(s)$ containing $a$.
Then $T_u :=\{ f(x): x\in u\}$ is a finite union of points and open intervals in $\Gamma$. There is a boundary point of $T_u$ lying in 
$\Gamma(C,\lceil u \rceil)\setminus \Gamma(C)$, contradicting Lemma~\ref{orthog2}.
\end{proof}

\begin{lemma}\label{weightone2}
 Let $C=\acl(C)$ be a set of field parameters containing an element with non-zero value, and let $e\in K$. Then either there is no $\alpha \in k(C,e)\setminus k(C)$, or there is no $\gamma\in \Gamma(C,e)\setminus \Gamma(C)$.
\end{lemma}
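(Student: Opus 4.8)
The plan is to mirror the proof of Lemma~\ref{weightone} from the algebraically closed case, replacing the $C$-minimality inputs by their weakly o-minimal analogues established in Lemmas~\ref{strongmin2}, \ref{orthog2}, \ref{model}, and~\ref{closedgeneric2}. First I would invoke Lemma~\ref{model}: since $C=\acl(C)$ contains a field element of non-zero value, $C\models T_{\con}$, so $C$ is an elementary submodel and we may speak of the generic type over $C$ of a $C$-definable ball (open or closed) or of a chain of $C$-definable balls, exactly as in \cite[Section 2.5]{HHM}. By weak o-minimality, $\tp(e/C)$ for $e\in K$ is controlled by the Dedekind cut of $e$ over $C$-definable balls; so either $e$ lies in a smallest $C$-definable closed ball (and is generic in the corresponding reduction), or $e$ realises the generic type of a $C$-definable open ball or of a chain of $C$-definable balls with no least element.

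In the first case, $e$ is not contained in any proper $C$-definable sub-ball of the closed ball $s$, so Lemma~\ref{closedgeneric2} gives $\Gamma(C,e)=\Gamma(C)$, and we are done: there is no $\gamma\in\Gamma(C,e)\setminus\Gamma(C)$. In the second case I would argue, as in Lemma~\ref{weightone}, that the generic type of an open ball (or of a decreasing chain with no least element) is orthogonal to the generic type of the closed ball $\calO$: if $a$ is generic in $\calO$ over $C$ then $a$ remains generic in $\calO$ over $Ce$. Concretely, suppose $\alpha\in k(C,e)\setminus k(C)$; pick $a\in\calO$ with $\mathrm{res}(a)=\alpha$, so $a$ is not generic in $\calO$ over $Ce$, yet $a$ is generic in $\calO$ over $C$ (as $\alpha\notin k(C)$), contradicting the orthogonality. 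This forces: no $\alpha\in k(C,e)\setminus k(C)$ in the open/chain case, and no $\gamma\in\Gamma(C,e)\setminus\Gamma(C)$ in the closed case, which is the disjunction claimed.

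The main obstacle is establishing the orthogonality of the generic type of an open ball to the generic type of $\calO$ in the weakly o-minimal setting, since we cannot simply cite \cite[Lemma 3.19]{HK} as in the $C$-minimal case. I would prove it directly: if $a$ is generic in $\calO$ over $C$ but not over $Ce$, there is a $Ce$-definable proper sub-ball $B\subseteq\calO$ with $a\in B$; pushing forward, we get a $Ce$-definable set of balls, hence (using weak o-minimality and stable embeddedness of $k$ and $\Gamma$ from Lemma~\ref{strongmin2}, together with the orthogonality of $k$ and $\Gamma$ from Lemma~\ref{orthog2}) a definable function either from $\red(B')$ to $\Gamma$ or exhibiting a non-trivial cut, contradicting that $e$ generates no new element of the closed-ball reduction — the same style of argument as the Claims in the proof of Theorem~\ref{theorem1}. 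Once orthogonality is in hand the rest is the short dichotomy above; the bulk of the work is purely transcribing the $C$-minimal genericity machinery of \cite[Section 2.5]{HHM} into the weakly o-minimal language, which Lemmas~\ref{strongmin2}--\ref{closedgeneric2} have been set up precisely to support.
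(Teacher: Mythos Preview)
Your plan departs substantially from the paper's argument, and the gap you yourself identify is real.

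The paper does \emph{not} attempt to classify $\tp(e/C)$ by balls and then invoke an orthogonality statement between open-ball and closed-ball generic types. Instead it argues directly by contradiction: assuming both $\alpha\in k(C,e)\setminus k(C)$ and $\gamma\in\Gamma(C,e)\setminus\Gamma(C)$ exist, it uses Lemma~\ref{model} to pick $e'\in\acl(C,e)$ with $\mathrm{res}(e')=\alpha$, then exchange (Lemma~\ref{exreal}(ii)) to get $\gamma\in\dcl(C,e')$ via some $C$-definable $f:K\to\Gamma$. One then looks at $\Delta:=f(A_\alpha)$: orthogonality of $k$ and $\Gamma$ (Lemma~\ref{orthog2}) forces $\Delta$ to be infinite and, since $\Delta$ is coded in $\Gamma$, to be $C$-definable. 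But then each fibre $f^{-1}(\delta)$ meets infinitely many residue classes in a proper non-empty subset, which is impossible for a finite union of convex sets. This is short, and uses only Lemmas~\ref{exreal}, \ref{strongmin2}, \ref{orthog2}, \ref{model}.

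Your route, by contrast, needs two ingredients that are not available off the shelf in the weakly o-minimal setting. First, the dichotomy ``$e$ is generic in a closed ball, or $e$ realises the generic type of an open ball/chain'' is not the right picture here: in $T_{\con}$ there is no unique generic type of a ball, since the field ordering refines types beyond the ball structure, and weak o-minimality only gives convex sets rather than balls. (Lemma~\ref{closedgeneric2} is carefully phrased to avoid any uniqueness claim.) Second, and more seriously, the orthogonality statement you need---the analogue of \cite[Lemma~3.19]{HK}---is precisely the content of the lemma you are trying to prove, and your sketch (``pushing forward, we get a $Ce$-definable set of balls, hence \dots\ a definable function \dots'') does not constitute a proof. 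You would end up reproducing something like the paper's $f(A_\alpha)$ analysis anyway to close that step. So the paper's direct argument is both shorter and avoids the need to set up a genericity/orthogonality framework for weakly o-minimal valued fields that the paper nowhere develops.
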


\begin{proof} Suppose $\alpha,\gamma\in \acl(C,e)\setminus \acl(C)$, with $\alpha\in k$ and $\gamma \in \Gamma$. Then by Lemma~\ref{model}, there is a field element $e'\in \acl(C,e)$ with residue $\alpha$. By Lemma~\ref{exreal}(ii), as $e'\not\in\acl(C)$, we have $e\in \acl(C,e')$, so 
$\gamma\in \dcl(C,e')$. Hence there is a $C$-definable function $f:K \to \Gamma$ such that $f(e')=\gamma$. Let $\Delta:=\{f(x): x\in A_\alpha\}$, where, as in Section 2,  $A_\alpha:=a+\mf{M}(\Uu)$ for some $a$ with residue $\alpha$. Then $\Delta$ is an infinite subset of $\Gamma$, since otherwise there is a definable function $k\to \Gamma$ with infinite range, contrary to Lemma~\ref{orthog2}. However,   as $k$ and $\Gamma$ are orthogonal, and $\Delta$ is 
coded by a tuple from $\Gamma$, $\Delta$ is $C$-definable. It follows that if $\delta\in \Delta$, then $f^{-1}(\delta)$ contains a proper non-empty subset of infinitely many residue classes, contrary to weak-o-minimality. 
\end{proof}

\medskip
\noindent
{\bf Proof of Theorem~\ref{theorem2}.}
The  proof of Theorem~\ref{theorem2} proceeds as in Section 2. We choose $w,r,a,b$ and define $h_{a,b}$ as before, using the same notation, and aim for a contradiction from the assumption that $h_{a,b}$ is coded in the geometric sorts. The 
only substantial change is in Case 2 (in Case 1, the use of strong minimality is easily replaced by an o-minimality argument). We describe this case in some detail, and leave the rest of the proof to the reader. For any 
subset $S$ of $K$ and value $\epsilon\in \Gamma$, write
$(S/\epsilon)^o$ for the set of open sub-balls of $S$ of radius $\epsilon$, and $(S/\epsilon)^c$ for the closed ones.

As in the algebraically closed case (Case 2), working under the assumption that $\bar{w}\in\acl(M,e)$, we shall show $\gamma\not\in \dcl(M,e,w)$.
Notice that $\Gamma(M,e)=\Gamma(M)$, by Lemma~\ref{weightone2}, so in particular $\gamma$ is infinite with respect to $\Gamma(M,e)$.
So suppose for a contradiction that there is an $\acl(M,e)$-definable function $f:W\to\Gamma$ with $f(w)=\gamma$.

Observe that there is a natural  ordering, induced by the field ordering and also denoted $<$, on $W$. By weak o-minimality of $M$, the set $f^{-1}(\gamma)$ is a finite 
union of maximal convex subsets $D_1,\ldots,D_t$ of $(W,<)$, with $D_1<\ldots< D_t$. First suppose that all of the $D_i$ are finite (so by density are singletons), and let $B$ be the
closed ball of radius $\beta$ containing $D_1$. Then $f$ defines a function from  $(B/\beta)^o$ to $\Gamma$
with infinite range, contradicting Lemma~\ref{orthog2}.

Thus we may assume some $D_i$, say $D_1$, is infinite. Put $D:=D_1$. We sometimes abuse notation and identify $D$ with the union of the 
balls (elements of $W$) in $D$, viewed as a subset of $K$.
Let $B$ be the intersection of the $\Uu$-definable closed sub-balls of $\calO$ containing $D$. By o-minimality of $\Gamma$, $B$ is itself a ball, of radius $\delta$, say; also, $B$ is closed. As usual, let $\red(B)$ be the set of open sub-balls of $B$ of the same radius as $B$. Observe that $\red(B)$ is o-minimal, by Lemma~\ref{strongmin2}, since it is in definable bijection with $k$.

We have $\delta\in \dcl(M,e,\gamma)$, and $\delta\leq \beta\in \Gamma(M)$.  However, $\gamma$ is infinite with respect to $\Gamma(M)$.  Since $\Gamma$ is stably embedded and has the structure of an ordered vector space over an archimedean ordered field (see Lemma~\ref{strongmin2}), it follows that $\delta\in \dcl(M,e)$.

Suppose first that $\lceil B\rceil\in\acl(M,e)$. If $D$ contains an element $s\in \red(B)$ not in $\acl(M,e)$, then $\Gamma(M,e,s)=\Gamma(M,e)$ by Lemma~\ref{closedgeneric2}. As $\gamma\in \Gamma(M,e,s)$, we then find $\gamma\in\Gamma(M,e)$, a contradiction. Thus, $D$ meets just finitely many elements of $\red(B)$, say $s_1,\ldots,s_n$, and these are algebraic over $M,e,\gamma$ and hence over $M,e$. As $D$ is convex and $\red(B)$ is densely ordered, this forces $n=1$. Also, $D$ meets $s_1$ in an initial or final segment of $s_1$ (with possibly $D=s_1$). Now as $\gamma$ is the initial or final value of $f$ on $s_1$, and $s_1\in \acl(M,e)$, we find $\gamma\in\acl(M,e)$, again a contradiction. 

Thus, $\lceil B\rceil\not\in \acl(M,e)$, so by weak o-minimality, $\red(B)$ is a complete type over $\acl(M,e,\lceil B\rceil)$. From this, the definition of $B$, and weak o-minimality it follows easily that for each $s\in \red(B)$, $D$ contains $(s/\beta)^o$ or is disjoint from $(s/\beta)^o$. We claim that $D=B$ or $D\in \red(B)$. For if not, 
then using the fibres of $f$ we can partition $\red(B)$ into a uniformly definable family of 
infinite disjoint convex sets. As $\red(B)$ is in definable bijection with $k$, this contradicts the o-minimality of $k$.

If $D\in \red(B)$ then $f$ induces a definable function $\bar{f}:\red(B)\to \Gamma$ with infinite range, contrary to Lemma~\ref{orthog2}. Thus, $D=B$. Since $\lceil B\rceil\not\in \acl(M,e)$, there is an interval $I$ of $(\calO/\delta)^c$ containing $B$ in its interior such that all elements of $I$ have the same type over $\acl(M,e)$. Pick $B_1,B_2\in I$ with $B_1<B<B_2$, and $b_i\in B_i$, and put $\delta'':=v(b_1-b_2)$. Let $B''$ be the open ball of radius $\delta''$ containing $B$, so $(B''/\delta)^c\subset I$.

For each $B_0\in (B''/\delta)^c$, $f$ takes constant value, $F(B_0)$, say, on $(B_0/\beta)^o$. Now all elements of $(B''/\delta)^c$ have the same type over $\acl(M,e)$. It follows that $F:(B''/\delta)^c\to \Gamma$ is injective. Let $\delta'\in \Gamma$ with $\delta''<\delta'<\delta$. For $B'\in (B''/{\delta'})^o$, define $S(B'):=\{F(B_0): B_0\in (B'/\delta)^c\}$. Then the sets $S(B')$ (for $B'\in (B''/{\delta')^o}$), form an infinite uniformly definable family of pairwise disjoint subsets of $\Gamma$, contrary to the o-minimality of $\Gamma$. \hfill $\Box$

\section{The $p$-adic case.}

 Fix a prime $p$. We shall assume below that $p>2$ to ensure that $p$-adic exponentiation converges on $p{\mathbb Z}_p$ -- see Remark~\ref{even} for the remaining case. We shall work with the language $\calL^{\an}_p$ and theory $T^{\an}_p$, as in the Introduction. Let 
$\Uu\models T^{\an}_p$ 
be sufficiently saturated. Observe first that the power series 
$G(X)=\Sigma_{i=0}^\infty \frac{p^nX^n}{n!}$ which defines the exponential map corresponds 
to a function symbol of $\calL^{\an}_p$, since $v_p(p^n/n!) \to \infty$ as $n \to \infty$.
We have $\exp(x)=G(xp^{-1})$ for $x\in p{\mathbb Z}_p$. The exponential map induces (in the standard model ${\mathbb Q}_p^{\an}$) a group isomorphism from $(p{\mathbb Z}_p,+)$ to $(1+p{\mathbb Z}_p,.)$ with inverse the natural logarithm. Indeed, there is a  power series $H(X)=\Sigma_{i=1}^\infty
\frac{(-1)^{n+1}p^nX^n}{n}$, given by an $\calL^{\an}_p$-symbol,  such that for $x=1+py\in 1+p{\mathbb Z}_p$ we have
$\log(x)=\log(1+py)=H(y)$.

We summarise   some facts about $T^{\an}_p$ used below.
Following \cite{HM2}, we shall say that an expansion $M$ of a model of $\Th({\mathbb Q}_p)$ is {\em $P$-minimal} if, for every $N\equiv M$, every parameter-definable subset of the field (in one variable) is quantifier-free definable in Macintyre's language $\calL_p$, so is semi-algebraic.

\begin{proposition} \label{padicfacts}
(i) $T^{\an}_p$ is $P$-minimal.

(ii) Algebraic closure has the exchange property in models of $T^{\an}_p$.

(iii) $T^{\an}_p$ has definable Skolem functions.

(iv) Let $\Uu\models T^{\an}_p$, and let $f:X\to K$ be a definable map from a geometric sort other than $K$ to $K$. Then $f$ has finite image.

(v) The value group $\Gamma$, equipped with $\emptyset$-definable induced structure, is a model of Presburger arithmetic expanded by constants, and is stably embedded.

\end{proposition}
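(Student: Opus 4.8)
The five assertions form a compilation, and the plan is to quote the known results while supplying the modest adaptations of Section~2 needed in the $p$-adic case.

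Part~(i) is precisely the $P$-minimality of $T^{\an}_p$, established in one variable (and uniformly) by \cite{DHM} within the analytic quantifier elimination of \cite{DvdD}; nothing further is required. Part~(v) is the $p$-adic analogue of Lemma~\ref{stablyembedded}(ii): by the quantifier elimination for $T^{\an}_p$ every formula is equivalent to one built from semi-algebraic data together with applications of $v$ to terms, so its restriction to a subset of $\Gamma^n$ collapses to a formula of Presburger arithmetic; this yields both the description of the induced structure on $\Gamma$ and, since the parameters occurring can be taken from $\Gamma$ itself, its stable embeddedness. Part~(iii) follows from the $p$-adic subanalytic cell decomposition of \cite{DvdD}: on each cell one definably selects, say, its centre together with a point of least valuation, just as for semi-algebraic sets in Macintyre's language.

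Parts~(ii) and~(iv) are obtained by rerunning the proofs of Lemma~\ref{exchange} and Lemma~\ref{finiteimage} with $P$-minimality in place of $C$-minimality. For~(iv) the ingredients are: that a definable map from a lattice sort $S_n$ or from $T_n$ into $K$ is locally constant (valid here since lattices remain rigid under sufficiently small perturbations of a basis); that a definable locally constant map $W\subseteq K^m\to K$ has finite image, proved by induction on $m$ with the case $m=1$ coming from exchange exactly as in the algebraically closed case; and elimination of finite imaginaries for fields together with compactness --- none of which is special to $C$-minimality. For~(ii), the proof of Lemma~\ref{exchange} used the dichotomy \cite[Proposition~6.1]{HM1} for definable functions and the interior statement \cite[Theorem~6.6]{celikler}; in the $p$-adic setting one substitutes, respectively, the structure theory of definable functions in $P$-minimal fields and the fact --- a consequence of the $p$-adic subanalytic cell decomposition of \cite{DvdD} --- that a definable subset of $K^2$ all of whose fibres over one coordinate are balls has non-empty interior.

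The step I expect to be the real obstacle is~(ii). Unlike $C$-minimality, $P$-minimality does not by itself force the exchange property (cf.\ \cite{HM2}), so one cannot argue purely from one-variable minimality and must genuinely appeal to the higher-dimensional cell decomposition. If no reference phrased for exactly this language is available, the fallback is to derive the interior statement above directly from the analytic cell decomposition of \cite{DvdD}, which is routine but not wholly short.
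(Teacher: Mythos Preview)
Your treatment of (i), (iii), (iv) matches the paper's: it cites \cite{DHM} for (i), \cite[3.6]{DvdD} for (iii), and for (iv) simply says the proof of Lemma~\ref{finiteimage} carries over. For (v) the paper takes a shortcut you did not: rather than arguing directly from analytic quantifier elimination, it invokes Cluckers \cite[Theorems~5 and~6]{cl0}, which derive the Presburger structure and stable embeddedness of $\Gamma$ from $P$-minimality alone. Your QE-based argument is reasonable but the paper's route is shorter.

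The real divergence is in (ii), and here you have the situation backwards. You write that ``unlike $C$-minimality, $P$-minimality does not by itself force the exchange property (cf.\ \cite{HM2})'', and therefore anticipate having to redo the interior/cell-decomposition argument of Lemma~\ref{exchange}. In fact it is $C$-minimality that fails to give exchange in general (which is why Lemma~\ref{exchange} needed \cite{celikler}), while $P$-minimality \emph{does}: \cite[Theorem~6.2]{HM2} proves that algebraic closure has the exchange property in \emph{any} $P$-minimal theory. The paper's proof of (ii) is therefore a one-line citation of this result together with (i). Your proposed cell-decomposition route would presumably work, but it is unnecessary, and your stated reason for taking it is a misreading of \cite{HM2}.
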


\begin{proof}
(i) This is the main theorem (Theorem A) of \cite{DHM}.

(ii) By \cite[Theorem 6.2]{HM2}, algebraic closure in any $P$-minimal theory
has the exchange property, so this follows from (i).

(iii) See \cite[3.6]{DvdD}. 

(iv) The proof of Lemma~\ref{finiteimage} applies here. 

(v) This follows from (i) above in combination with Theorems 5 and 6 of \cite{cl0}.
\end{proof}

We shall work over an $\omega$-saturated model $M$ of $T^{\an}_p$, inside the sufficiently saturated model $\Uu$. 

Let $\Oo(M)$ be the valuation ring of $M$, and fix  $\beta \in \Gamma(M)$ which is infinite in the sense that for all $n\in {\mathbb N}$, $\beta>nv(p)$.  Let $W= \Oo(M) / \beta\Oo(M)$.   
  For $w \in W$, let $A_w$ denote the corresponding additive coset of $\beta \Oo(\Uu)$.   
  For 
  $\gamma \in \Gamma$ let
 $V_\gamma$ be the annulus $\{x: v(x)=\gamma\}$, and for 
 $r\in RV$ let $B_r$ 
denote the corresponding multiplicative coset of $1+ \mf{M}$ (viewed as a subset of $\Uu$).
Clearly, for any base $C$  with $\beta \in C$, if $w$ is a non-algebraic element of $W$, then the subset $A_w$ of $VF$ has no $C$-algebraic points; and similarly if $\gamma \in \Gamma \setminus \acl(C)$ then the annulus $V_\gamma$ has no  $C$-algebraic points.  

There is an ${\rm Aut}(\Uu/M)$-invariant partial type $q$ of elements of $\Gamma$ determined by the formulas $x>\gamma$ for
 all $\gamma\in \Gamma(\Uu)$. 
Note also that the map $v:RV \to \Gamma$ is finite-to-one.

\begin{lemma} \label{Pmin}
Let $U$ be an infinite definable subset of $W$. Then there is an infinite subset $U'$ of $U$ 
and a ball $B$ such that $U':=\{w\in W:A_w\subset B\}$.
\end{lemma}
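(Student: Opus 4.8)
My plan is to transfer the problem into the field sort and apply $P$-minimality. Put $S:=\bigcup_{w\in U}A_w\subseteq\calO(\Uu)$, a definable subset of $VF$. The cosets $A_w$ are pairwise disjoint closed balls of radius $\beta$, so $S$ is a disjoint union of infinitely many such balls, and a closed ball of radius $\beta$ is contained in $S$ precisely when it equals some $A_w$ with $w\in U$. Hence it is enough to produce a single ball $B$ with $B\cap\calO(\Uu)\subseteq S$ that contains infinitely many of the balls $A_w$ with $w\in W$: for such a $B$ the set $U':=\{w\in W:A_w\subset B\}$ is automatically contained in $U$, infinite, and of exactly the required form. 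So the whole task reduces to locating such a $B$.

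By $P$-minimality of $T^{\an}_p$ (Proposition~\ref{padicfacts}(i)) together with the form of quantifier-free formulas in Macintyre's language, $S$ is a finite Boolean combination of balls and of sets $\{x:f(x)\in P_n\}$ with $f$ a polynomial, only finitely many exponents $n$ appearing; let $N$ be their least common multiple. Here the hypothesis that $\beta$ is infinite relative to $v(p)$ does the work: two elements of a fixed coset $A_w$ of $\beta\calO$ differ by an element of valuation $\geq\beta$, hence --- for all but the finitely many $A_w$ lying in some $V_\gamma$ with $\gamma$ within a bounded distance of $\beta$, together with the coset $\beta\calO$ itself --- are congruent modulo $1+p^{m}\calO$ for a standard $m=m(N,p)$ chosen so that $1+p^m\calO$ consists of $N$th powers. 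Discarding these finitely many $w$, so that $U$ stays infinite, we may assume each $A_w$ with $w\in U$ lies inside a single coset of $P_N$. Partitioning $\calO(\Uu)$ into the finitely many cosets of $P_N$ it meets, $S$ meets each in a finite Boolean combination of balls, i.e.\ a finite union of Swiss cheeses (Holly); and since $U$ is infinite, on at least one such coset $Z$ the set $S\cap Z$ is a union of infinitely many of the $A_w$.

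It then remains to extract from $S\cap Z$ an $M$-definable ball $B$ whose radius lies infinitely far below $\beta$. Writing $S\cap Z$ as a finite union of Swiss cheeses, each of the form $B_0\setminus(B_1\cup\cdots\cup B_k)$ with the $B_i$ proper sub-balls of $B_0$ (Holly), a counting/measure argument --- only finitely many outer balls and finitely many holes occur, while $U$ is infinite --- shows that one of these outer balls has radius $\gamma_0$ with $\beta-\gamma_0$ infinite, and that the corresponding Swiss cheese contains a ball $B$ of radius infinitely below $\beta$; since $S\cap Z$ is definable over $M$ and $M\elem\Uu$, we may take $B$ to be $M$-definable. Finally, because $\beta-\mathrm{radius}(B)$ is infinite and $M$ is $\omega$-saturated, $B(M)/\beta\calO(M)$ is infinite, so $B$ contains infinitely many $A_w$ with $w\in W$, and these all lie in $U$; this $B$ is as required. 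I expect the real content to lie in this last extraction step, together with the clearing of the $P_n$-congruences above: since the value group is discrete, merely enlarging a ball by finitely many valuation steps yields only finitely many radius-$\beta$ sub-balls, so one must exploit the infiniteness of $\beta$ (and the $\omega$-saturation of $M$) in an essential way to find a ball of radius infinitely below $\beta$ that still lies inside $S$.
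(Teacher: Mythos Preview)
Your argument has a genuine gap at the step where you claim that $S\cap Z$ is a finite Boolean combination of balls (a ``Swiss cheese'') for each coset $Z$ of $P_N$. The Holly decomposition you invoke applies to ACVF and RCVF, not to $p$-adically closed fields; in the $p$-adic setting, definable subsets of the line are \emph{not} in general Boolean combinations of balls---this is precisely why the predicates $P_n$ are required in Macintyre's language. More concretely, the $P_n$-conditions occurring in the cell decomposition of $S$ have the form $P_n(\lambda(x-a))$ for various centres $a$, and restricting $x$ to a multiplicative coset $Z=cP_N$ does nothing to trivialize a condition on $x-a$. Your observation that each $A_w$ lies in a single coset of $P_N$ is correct (and morally the same as Lemma~\ref{hmlemma}), but it says only that $S$ is partitioned by the $S\cap Z$; it gives no simplification of the internal structure of $S\cap Z$. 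As a simple illustration, the set $\{x\in\calO:P_2(x-1)\}\cap P_2$ is still not a finite Boolean combination of balls.

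The paper's proof avoids this by working directly with a single cell. One writes $VF(U)=\bigcup_{w\in U}A_w$ as a finite union of cells
\[
D=\{x:\gamma_1\,\Box_1\,v(x-a)\,\Box_2\,\gamma_2\ \wedge\ P_n(\lambda(x-a))\},
\]
passes to one $D$ containing infinitely many of the $A_w$, and then argues by contradiction: if no ball $B$ as in the lemma exists, every maximal sub-ball of $D$ has radius within a fixed finite distance of $\beta$. But Lemma~\ref{hmlemma} shows that around any $x\in D$ the ball of radius $>2v(n)+v(x-a)$ is contained in $D$; hence the maximal ball through $x$ has radius governed by $v(x-a)$, and as $v(x-a)$ ranges over the (infinite) set of values occurring in $D$, these radii take infinitely many values. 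This contradicts the assumption that all radii lie in a finite interval $[\beta',\beta]$. The role you wanted ``clearing the $P_n$-congruences'' to play is thus played instead by Lemma~\ref{hmlemma}, which quantifies exactly how large a ball the $P_n$-condition is constant on, with the crucial point that this radius varies with $v(x-a)$.
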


\begin{proof} This is an easy consequence of $P$-minimality. Indeed, by $P$-minimality
the set $VF(U):= \bigcup_{w\in U} A_w$ is a finite union of sets of the form
$$D=\{ x\in VF: \gamma_1 \Box_1 v(x-a) \Box_2 \gamma_2 \,\&\, P_n(\lambda(x-a))\},$$
where $\Box_1,\Box_2\in \{<,\leq\}$, $a\in VF$, $\gamma_1,\gamma_2\in \Gamma \cup\{-\infty,\infty\}$, and $\lambda$ is chosen from a fixed set of coset representatives of $P_n$ in $K^*$.
 We may suppose that $VF(U)$ is exactly this set $D$.
If there is no  ball $B$ as in the lemma, then there is $\beta'<\beta$ such that $\beta-\beta'$ 
is finite and the set $VF(U)$ is a union of a set of maximal subballs of $VF(U)$ which are pairwise
 disjoint and have radius between $\beta'$ and $\beta$. However, this is impossible, for  if the set $D$ is a
 union of infinitely many disjoint maximal balls, then they must have infinitely
 many different radii. This can be seen for example from the following standard lemma, proved in  \cite[Lemma 2.3]{HM2}.
\end{proof}

\begin{lemma}\cite[Lemma 2.3]{HM2}\label{hmlemma}
Let $x,x',a,\lambda\in {\mathbb Q}_p$ with $\lambda\neq 0$, and let $n>1$ be an integer. If $P_n(\lambda(x-a))$ holds, and
$v(x-x')>2v(n)+v(x-a)$, then $P_n(\lambda(x'-a))$ holds. 
\end{lemma}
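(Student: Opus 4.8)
The plan is to reduce the statement to one application of the strong form of Hensel's lemma (Newton's lemma). Set $z:=x-a$ and $z':=x'-a$, so the hypothesis becomes $v(z-z')>2v(n)+v(z)$ and the desired conclusion is that $P_n(\lambda z')$ holds, granting $P_n(\lambda z)$. We may assume $x\neq a$, since otherwise $v(x-x')$ would have to be $\infty$, forcing $x'=a$ and making the statement trivial. As $n$ is a positive integer, $v(n)\geq 0$, so $v(z-z')>v(z)$, and hence the ultrametric inequality gives $v(z')=v(z)$; in particular $u:=z'/z$ is a well-defined element of ${\mathbb Z}_p^*$, and $v(u-1)=v(z-z')-v(z)>2v(n)$.

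Next I would observe that $u$ is an $n$-th power in ${\mathbb Q}_p^*$: apply Newton's lemma to $f(Y)=Y^n-u$ at the approximate root $Y_0=1$. Here $v(f(1))=v(u-1)>2v(n)$, while $f'(1)=n$ has valuation $v(n)$, so $v(f(1))>2\,v(f'(1))$. Newton's lemma then yields $y_0\in{\mathbb Z}_p$ with $y_0^n=u$; since $u$ is a unit, $y_0\neq 0$, so $u=z'/z$ is a (nonzero) $n$-th power.

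Finally, since $P_n(\lambda z)$ holds we may write $\lambda z=y^n$ for some $y\in{\mathbb Q}_p^*$, and then $\lambda z'=(\lambda z)(z'/z)=(yy_0)^n$, so $P_n(\lambda z')$ holds; recalling $\lambda z'=\lambda(x'-a)$, this is exactly the conclusion.

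I expect no real obstacle here: the whole content is the bookkeeping of valuations guaranteeing $v(u-1)>2v(n)$, which is precisely the hypothesis needed to invoke Newton's lemma. This is also where the factor $2v(n)$ in the statement is forced — the naive bound $v(x-x')>v(n)+v(x-a)$ would not suffice — so the only care required is in carrying that factor of $2$ correctly through the estimate for $v(z'/z-1)$.
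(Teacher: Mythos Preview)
Your proof is correct: the reduction to showing that $u=z'/z$ is an $n$-th power, followed by the application of the strong form of Hensel's lemma to $Y^n-u$ at $Y_0=1$, is exactly the right argument, and your valuation bookkeeping (in particular $v(u-1)>2v(n)$) is accurate. The paper itself does not give a proof of this lemma; it simply cites \cite[Lemma~2.3]{HM2}, where the standard Hensel argument you have written out is presumably what appears.
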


\begin{lemma}\label{fincover}
Let $\rho:\Gamma'\to \Gamma$ be a definable finite cover of $\Gamma$, so $\Gamma'$ is a definable set and $\rho$ a definable function with  fibres of sizes uniformly bounded by $t\in {\mathbb N}$.
Then there is no definable partial function $\Gamma' \to W$ with infinite range.
\end{lemma}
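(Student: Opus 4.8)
The plan is to suppose, for contradiction, that there is a definable partial function $f$ from $\Gamma'$ into $W$ with infinite range, and to extract from it a definable surjection from a set that is ``one-dimensional over $\Gamma$'' onto a ``two-dimensional'' piece of $W$, which the Presburger structure of $\Gamma$ forbids. Throughout one may absorb the finitely many parameters of $f$ into $M$, so that $f$, $\rho$ and $W$ are $M$-definable.

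First I would normalise the situation. The set $\im(f)$ is an infinite $M$-definable subset of $W$, so by Lemma~\ref{Pmin} it contains an infinite subset of the form $U'=\{w\in W:A_w\subseteq B\}$ for an $M$-definable ball $B$; intersecting $B$ with $\calO$ we may take $B$ to have a centre $c\in\calO(M)$ and radius $\delta\in\Gamma(M)$ with $0\le\delta<\beta$, and necessarily $\beta-\delta$ is infinite (otherwise $U'$ is finite). Restricting $f$ to $f^{-1}(U')$ and then composing with the $M$-definable affine bijection $x\mapsto u(x-c)$ of $B$ onto $\calO$ (where $v(u)=-\delta$), we reduce to the case that $f\colon\Gamma'\to W$ is \emph{surjective}, $W=\calO/\beta\calO$ with $\beta\in\Gamma(M)$ infinite (relabelled), and $\rho\colon\Gamma'\to\Gamma$ still has all fibres of size $\le t$.

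Next I would use the valuation on $W$. For $w\ne 0_W$ every element of the coset $A_w$ has the same valuation $\lambda(w)$, giving an $M$-definable surjection $\lambda\colon W\setminus\{0_W\}\to\{\eta\in\Gamma:0\le\eta<\beta\}$ whose fibre $\lambda^{-1}(\eta)$ (the balls of radius $\beta$ inside the annulus $\{v=\eta\}$) is infinite exactly when $\beta-\eta$ is infinite. Put $g:=\lambda\circ f$ on $\Gamma'':=\Gamma'\setminus f^{-1}(0_W)$; then $g$ is a definable surjection onto $\{\eta:0\le\eta<\beta\}$ and $f(g^{-1}(\eta))=\lambda^{-1}(\eta)$. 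Form $G:=\{(\rho(\gamma'),g(\gamma')):\gamma'\in\Gamma''\}\subseteq\Gamma^2$; for each $\gamma$ the set $\{\eta:(\gamma,\eta)\in G\}$ has size $\le t$. The key step is a purely Presburger fact, available because $\Gamma$ is stably embedded with the pure Presburger structure (Proposition~\ref{padicfacts}(v)): a definable subset of $\Gamma^2$ all of whose fibres over the first coordinate are of size $\le t$ has only finitely many $\eta$ for which $G^{-1}(\eta):=\{\gamma:(\gamma,\eta)\in G\}$ is infinite. (Cell decomposition: on each cell the $\eta$-coordinate either lies in an interval of bounded width sliding affinely with $\gamma$, forcing the fibres $G^{-1}(\eta)$ to be uniformly finite, or is constant, so that only finitely many $\eta$ occur.) Let $D$ be this finite set.

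Finally I would pick $\eta$ with $\beta-\eta$ infinite and $\eta\notin D$: this is possible because the set of $\eta$ with $\mathbb N<\eta<\beta-\mathbb N$ is infinite while $D$ is finite. For this $\eta$ the fibre $G^{-1}(\eta)$ is finite, hence $g^{-1}(\eta)\subseteq\rho^{-1}(G^{-1}(\eta))$ is finite, and therefore so is $f(g^{-1}(\eta))=\lambda^{-1}(\eta)$; but $\beta-\eta$ infinite makes $\lambda^{-1}(\eta)$ infinite, a contradiction. The main obstacle is arranging the first reduction so that $\lambda\circ f$ is forced to surject onto a full initial interval $\{\eta:0\le\eta<\beta\}$ --- this is what makes the Presburger counting bite --- together with pinning down the cell-decomposition fact about the fibres $G^{-1}(\eta)$; the remaining steps are routine.
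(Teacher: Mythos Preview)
Your argument is correct and takes a genuinely different route from the paper's. After the same normalisation via Lemma~\ref{Pmin}, the paper does \emph{not} pass to $\Gamma^2$. Instead it reverses the roles: it builds a definable map $g:U\to\Gamma$ by $g(u)=\min\rho(f^{-1}(u))$ (having first arranged that these minima exist), so $g$ has fibres of size at most $t$. Then it picks $j$ with $p^j>t$, groups $U$ into $\sim$-classes of size $p^j$ (cosets of a slightly larger ball), and observes that on each $\sim$-class $g$ must take at least two values, so the set of $u$ minimising $g$ on their class is a proper nonempty subset of every class. This ``good'' set is then an infinite definable subset of $W$ that cannot contain all the $\beta$-balls inside any larger ball, contradicting Lemma~\ref{Pmin}. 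So the paper uses Lemma~\ref{Pmin} twice and a pigeonhole argument inside $W$, staying close to the ball structure; your proof instead exports the problem to $\Gamma^2$ via the valuation map $\lambda$ and invokes a Presburger cell-decomposition fact (that a set with uniformly finite fibres over one coordinate has only finitely many infinite fibres over the other). Your approach is a clean ``dimension'' argument and makes transparent why a one-parameter family over $\Gamma$ cannot cover a two-parameter piece of $W$; the paper's is more elementary and avoids leaning on the full Cluckers cell decomposition, needing from Presburger only that bounded-below definable sets have minima. One small point: your parenthetical sketch of the Presburger fact says the $\eta$-coordinate is ``constant'' on a cell when the slope is zero; strictly it then ranges over a fixed bounded interval, but that still gives only finitely many $\eta$ with infinite preimage, so the conclusion stands.
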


\begin{proof} Suppose that $\Gamma_0\subset \Gamma$ is infinite, and that there is $f: \Gamma_0' :=\rho^{-1}(\Gamma_0)\to W$
 with infinite range. Put $U:=\ran(f)$. Replacing $\Gamma_0'$ by an infinite definable subset if necessary, we may by 
 Lemma~\ref{Pmin} suppose that there is  a ball $B$ of radius $\delta<\beta$ (with $\beta-\delta$  infinite) such that
$U=\{u\in W: A_{u}\subset B\}$. Choose $j$ least such that $p^j>t$, and put $\beta':=\beta-t$. Let $W':=\Oo/\beta'\Oo$. 

{\em Claim.} We may suppose that for each $u\in U$, $\rho(f^{-1}(u))$ has a least element.

{\em Proof of Claim.} By $P$-minimality, there is $N\in {\mathbb N}$ such that any definable subset of $\Gamma$ is a Boolean 
combination of intervals and cosets of finite index subgroups $n\Gamma$ where $n\in {\mathbb N}$ with $0<n<N$. Since the sets $\rho (f^{-1}(u))$ 
form an infinite  uniformly definable family of subsets of $\Gamma$ such that any intersection of any $t+1$ members of the family is empty, 
we may (reducing $U$ if necessary) suppose these sets  are all bounded below in $\Gamma$. As $\Gamma$ is a ${\mathbb Z}$-group 
(a model of Presburger arithmetic), it is clear that any definable subset of $\Gamma$ which is bounded below has a least member, 
yielding the claim.

\smallskip

For each $u\in U$, let $g(u):=\Min \rho (f^{-1}(u))$, so $g:U\to \Gamma$ is a definable  function with fibres of size at most $t$. 
For $u,u'\in U$, put $u\sim u'$ if there is $w'\in W'$ such that $A_u\cup A_{u'}\subseteq A_{w'}$; so $\sim$ is an equivalence relation with classes of size $p^j>t$. Finally, say that $u\in U$ is {\em good} if $g(u)\leq g(u')$ for all $u'\in U$ such that $u \sim u'$. 
Then each $\sim$-class of $U$ has some good elements and some elements which are not good.
It follows that $\{u\in U: u\mbox{~is good}\}$ is an infinite definable subset of $W$ which does not satisfy the conclusion of Lemma~\ref{Pmin}, a contradiction. 
\end{proof}

We choose $w\in W\setminus \acl(M)$, and  an element $r$ of $RV$ with $v(r) \models q| \acl(M,w)$. Put $\gamma:=v(r)$. By the description of 1-variable definable sets and Lemma~\ref{hmlemma}, all elements of $A_w$ realise the same type over $M$.

\begin{lemma}\label{padicalg} Let $e$ be a field element. 

(i)  If $w\not\in \acl(M,e)$, then $w \not\in \acl(M,e,r)$.

(ii)  If $w\in \acl(M,e)$, then $\gamma\not\in \acl(M,e,w)$.

\end{lemma}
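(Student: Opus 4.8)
The plan is to prove (i) and (ii) separately: part (i) I would reduce directly to Lemma~\ref{fincover}, and part (ii) I would obtain by analysing, via $P$-minimality, the definable maps from $K$ into $\Gamma$. Two remarks are used throughout. First, $v:RV\to\Gamma$ is finite-to-one, so $r$ and $\gamma=v(r)$ are interalgebraic over $M$ and $\acl(M,e,r)=\acl(M,e,\gamma)$. Second, by $P$-minimality the quantifier $\exists^\infty$ is eliminated, so all the finite sets appearing below have uniformly bounded size.

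For (i): if $\gamma\in\acl(M,e)$ then $\acl(M,e,r)=\acl(M,e)$ and the conclusion is just the hypothesis $w\notin\acl(M,e)$; so assume $\gamma\notin\acl(M,e)$ and, for a contradiction, that $w\in\acl(M,e,\gamma)$. Pick an $\acl(M,e)$-formula $\psi(x,y)$ with $\psi(w,\gamma)$ and $\psi(W,\gamma)$ finite, and let $n_\psi$ bound $|\psi(W,y)|$ whenever this is finite. Put $\Gamma':=\{(y,x)\in\Gamma\times W:\psi(x,y)\wedge|\psi(W,y)|\le n_\psi\}$, with $\rho:\Gamma'\to\Gamma$ the first projection; then $\rho$ has fibres of size at most $n_\psi$, and its image is infinite, since it contains every $\acl(M,e)$-conjugate of $\gamma$ (there are infinitely many, as $\gamma\notin\acl(M,e)$). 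The second projection is a definable partial function $\Gamma'\to W$ whose range contains every $\acl(M,e)$-conjugate of $w$, hence is infinite (as $w\notin\acl(M,e)$). After restricting to the image of $\rho$ (or extending $\Gamma'$ trivially to a full cover of $\Gamma$), this contradicts Lemma~\ref{fincover}.

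For (ii): since $w\in\acl(M,e)\setminus\acl(M)$ we have $e\notin\acl(M)$, hence $\dim(e/M)=1$; and $\acl(M,e,w)=\acl(M,e)$, so it suffices to show $\gamma\notin\acl(M,e)$. Suppose not. Using definable Skolem functions (Proposition~\ref{padicfacts}(iii)) together with the ordering of $\Gamma$ we may fix an $\acl(M)$-definable partial function $g:K\to\Gamma$ with $g(e)=\gamma$; since $\gamma$ is infinite over $\Gamma(\acl(M,w))\supseteq\Gamma(M)$, we get $g(e)>\Gamma(M)$. By ($p$-adic analytic) cell decomposition, on the cell over $\acl(M)$ containing $e$ the function $g$ is a fixed $\bQ$-linear combination of valuations $v(h_i(x))$ of finitely many $\acl(M)$-definable analytic functions $h_i$, plus a constant in $\Gamma(\acl(M))=\Gamma(M)$; as $g(e)\in\Gamma$ and $g(e)>\Gamma(M)$, some $v(h_i(e))$ must exceed every element of $\Gamma(M)$. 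Weierstrass preparation writes such an $h_i$, on the relevant region, as a unit of bounded valuation times a polynomial over $\acl(M)$, so $v(e-\rho)>\Gamma(M)$ for some root $\rho\in\acl(M)$ of that polynomial. Thus $e$ lies in every $\acl(M)$-definable ball around $\rho$ of finite radius; in particular $v(e-\rho)>\beta$. Since, by $P$-minimality, any $\acl(M)$-definable finite-to-one relation witnessing $w\in\acl(M,e)$ is constant on a sufficiently small $\acl(M)$-definable ball around $\rho$, and such a ball contains $e$, we conclude $w\in\acl(M)$, a contradiction.

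The routine ingredients — interalgebraicity of $r$ and $\gamma$, elimination of $\exists^\infty$, uniform finiteness, coding of finite sets — are immediate from $P$-minimality and Proposition~\ref{padicfacts}, and the overall shape of the argument mirrors Claims~2 and~3 in the proof of Theorem~\ref{theorem1}. The main obstacle is the structural step in (ii): one has to extract from $P$-minimality, through analytic cell decomposition and Weierstrass preparation (this is where the analytic function symbols, as opposed to the purely semialgebraic case, require care), that a definable map $K\to\Gamma$ over $\acl(M)$ can attain a value infinite over $\Gamma(M)$ only at a point that is $M$-infinitesimally close to an $\acl(M)$-algebraic point, and that this, together with $\beta\in\Gamma(M)$, pins $w$ down to $\acl(M)$.
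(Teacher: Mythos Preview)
Your argument for (i) is correct and close in spirit to the paper's: both reduce to Lemma~\ref{fincover}. Your direct construction of the finite cover $\Gamma'\subset\Gamma\times W$ from the algebraicity formula is slightly more streamlined than the paper's route (which passes through a field element $e'$ with $v(e')=\gamma$ and uses Skolem functions to build the cover), but the idea is the same.

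For (ii) your approach differs genuinely from the paper's. The paper first replaces $e$ by an interalgebraic element lying in $A_w$, then analyses the fibre $f^{-1}(\gamma)$ directly via $P$-minimality, Lemma~\ref{hmlemma}, and the Presburger structure of $\Gamma$, splitting into cases according to whether $f^{-1}(\gamma)\subseteq A_w$ and, in each case, producing either an infinite uniform family of disjoint infinite subsets of $\Gamma$ or an $\acl(M)$-definable ball on which $f$ is constant with value $\gamma$. Your route through analytic cell decomposition is a reasonable alternative, but as written it has two gaps. First, you omit the reduction to $e\in A_w$; without it your final step fails, since the claim that an $\acl(M)$-definable finite-to-one relation into $W$ is ``constant on a sufficiently small $\acl(M)$-definable ball around $\rho$'' is not justified for arbitrary such relations. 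With the reduction the step becomes trivial: $v(e-\rho)>\Gamma(M)\ni\beta$ forces $\rho\in A_w$, so $w\in\dcl(M)$. Second, from $g(e)=\sum_i q_i\,v(h_i(e))+c>\Gamma(M)$ you infer that some $v(h_i(e))>\Gamma(M)$, but a priori one only gets that some $v(h_i(e))$ is unbounded with respect to $\Gamma(M)$, possibly on the negative side; ruling this out uses $e\in\calO$, which again comes from the reduction. (In fact, one-variable analytic cell decomposition already gives $g(x)=\tfrac{a}{n}\,v(x-c)+\delta$ on the relevant cell with a single center $c\in M$, so the detour through several $h_i$ and Weierstrass preparation is unnecessary.)

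With these fixes your route works and is arguably shorter than the paper's case analysis, though it rests on the analytic cell decomposition theorem rather than on the bare $P$-minimality statements quoted in Proposition~\ref{padicfacts}.
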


\begin{proof} (i) Since $r\in \acl(\gamma)$, it suffices to show $w\not\in \acl(M,e,\gamma)$. 
So suppose that $w\in \acl(M,e,\gamma)$. Then for any $e'\in VF$ with $v(e')=\gamma$, 
$w\in \acl(M,e,e')$, so, by the existence of definable Skolem functions, $w\in \dcl(M,e,e')$, with say $w=f(\bar{m},e,e')$, where $\bar{m}$
is a tuple of field elements of  $M$. Define an equivalence relation $\equiv$ on an appropriate definable  subset $X$ of $K$, putting 
$x\equiv y$ if and only if $ v(x)=v(y)$ and $f(\bar{m},e,x)=f(\bar{m},e,y)$. Then $X/\equiv$ is a definable finite cover of a subset of 
$\Gamma$, and $f$ induces  a definable function $\bar{f}:X/\equiv \to W$ with infinite range. This contradicts Lemma~\ref{fincover}.

(ii) Suppose $\gamma\in \acl(M,e,w)$. As $w\in \acl(M,e)$, we have $\gamma\in \acl(M,e)$, 
so there is an $M$-definable function $f:K \to \Gamma$ with $f(e)=\gamma$. As $w\in \acl(M,e)$, which is a model,
there is a field element $e'\in w$ inter-algebraic with $e$ over $M$, so we may suppose $e\in w$. 

We consider the set $f^{-1}(\gamma)$.
Suppose first that $f^{-1}(\gamma) \subseteq w$. Then $A_w$ is a union of fibres of $f$.

If $f^{-1}(\gamma)$ is infinite, choose infinite $\beta'\in \Gamma$ with $\beta'<\beta$ and $v(\beta-\beta')$ infinite. For any ball $B$ write $f(B):=\{f(x):x\in B\}$. By Lemma~\ref{Pmin} (and with appropriate choice of $\beta'$), there is an infinite definable set  $S$ of balls $B\in \calO/\beta'\calO$ such that the set $\{f(B):B\in S\}$ is a uniformly definable infinite family of infinite definable pairwise disjoint subsets of $\Gamma$. This is impossible, by Proposition~\ref{padicfacts}(v) and quantifier elimination in Presburger arithmetic.

On the other hand, if $f^{-1}(\gamma)$ is a {\em finite} subset of $w$, 
then $f(w)$ is an infinite subset of $\Gamma$, and by taking a family of translates of $w$ (inside a larger ball) and applying $f$, we obtain again an infinite uniform family of infinite disjoint definable subsets of $\Gamma$.

Suppose now that $f^{-1}(\gamma)\not\subseteq w$. 
By $P$-minimality (Proposition~\ref{padicfacts}(i)), we may write $f^{-1}(\gamma)$ as a finite unions of `1-cells' $C_1,\ldots,C_t$ say. Here, the $C_i$ as before have the form
$$\{x\in VF: \gamma_1\Box_1 v(x-a)\Box_2 \gamma_2 \,\&\, P_n \lambda(x-a)\},$$
where $\Box_1,\Box_2\in \{<,\leq\}$, $a\in VF$, $\lambda$ is chosen from a fixed set of coset representatives of $P_n$ in $K^*$, and $\gamma_1,\gamma_2 \in \Gamma\cup\{-\infty,\infty\}$.

Let $B$ be the smallest ball containing $f^{-1}(\gamma)$. (Note here that
the intersection of the balls containing $f^{-1}(\gamma)$ is a ball, since any non-empty definable subset of $\Gamma$ which is bounded above has a greatest element -- see e.g. \cite[Lemma 4.4]{HM2}.) Let $\delta$ be the radius of $B$. Now $\delta<\beta\in\Gamma(M)$ and $\gamma>\Gamma(M)$. As $\Gamma$ carries the structure of Presburger arithmetic,  all definable functions $\Gamma \to \Gamma$ are piecewise linear over ${\mathbb Q}$(see also \cite[Proposition 2]{cl0}. Hence, as $\delta\in \dcl(M,\gamma)$, we have $\delta\in \dcl(M)$. 
We claim that $\lceil B \rceil\in \acl(M)$. Indeed, otherwise $\delta>n$ for all $n\in {\mathbb N}$ and we may argue as above (with $w$ replaced by $B$) to obtain a contradiction.

By Lemma~\ref{hmlemma}, considering the form of the $C_i$,
 there is a ball $B'$ with $s\subset f^{-1}(\gamma)$ such that $B'$ is at finite distance from $B$ in the natural tree structure on the set of all balls. (This is the graph whose vertex set is the set of all balls, with two balls $B_1,B_2$ adjacent if $B_1\subset B_2$ or $B_2 \subset B_1$ and there is no other ball strictly between them.) Thus $\lceil B'\rceil\in \acl(M,\lceil B\rceil)$. As $f$ takes constant value $\gamma$ on $s$, it follows that $\gamma\in \acl(M,\lceil B \rceil)=\acl(M)$, a contradiction.
\end{proof}

\medskip
 \noindent
 {\bf Proof of Theorem~\ref{theorem3}.}
To define the imaginary which cannot be coded in $\G$, choose $a\in A_w$ and $b\in B_r$, and let $c\in VF(M)$ with $v(c)=\beta$. 
There is a definable homomorphism $E:(\beta\Oo,+) \to (1+\mf{M}, .)$ given by
$E(x)= \exp(pc^{-1}x)$.
Define the affine homomorphism $h(a,b): A_w \to B_r$, with homogeneous component $E$, by 
$h_{a,b}(x)=b{\rm exp}(pc^{-1}(x-a))=bE(x-a)$. As usual,  we argue by contradiction from the assumption that $h$ is coded in the geometric sorts.

As before (using Proposition~\ref{padicfacts}(iv)), if $C'\supset M$ is a larger base containing $w,r$, chosen so that some affine 
homomorphism $g:A_w\to B_r$ with homogeneous component $E$ is definable over $C'$, then $h$ is coded over $C'$ by a fixed 
field element $d$. Again as before, $h$ is coded over $M$ by $(w,r,\bar{e})$ where $\bar{e}$ is a tuple of field elements of dimension 
1 over $M$. By the existence of Skolem functions, we may suppose that $\bar{e}$ is a single field element, 
say $e$. Thus, by Lemma~\ref{padicalg},  either $w\not\in \acl(M,e,r)$, or $w\in \acl(M,e)$, and $v(r)$ realises $q|\acl(M,e,w)$, so 
$r\not\in \acl(M,e,w)$. 

Each case is eliminated as in the algebraically closed case, the first as in Case 1, the second as in Case 2. $\Box$

\begin{remark} \label{even} \rm
For the case $p=2$, the exponential map converges on $p^2{\mathbb Z}_p$ and induces an isomorphism 
$(p^2{\mathbb Z}_p,+)\to (1+p^2{\mathbb Z}_p,.)$. The argument above can easily be adjusted to give a proof of 
Theorem~\ref{theorem3} in this case too. See also Remark~\ref{mixed}.
\end{remark}

\section{Alternative poof of theorem~\ref{theorem1}.}
In this final section we sketch  a slightly different proof of Theorem~\ref{theorem1}, at least in residue characteristic 0, with a rather simpler imaginary. The proof that it cannot be eliminated is more stability-theoretic, and takes place mainly in the residue field. It is not immediately clear whether the proof  can be adapted to yield Theorems~\ref{theorem2} and \ref{theorem3}. The argument given below uses parts of the proof in Section 2.

As in Section 2, let $K\models T^{\an}_D$ be sufficiently saturated, of residue characteristic 0. For $F$ a field, let $G_a(F)$ and $G_m(F)$ denote respectively the additive and multiplicative groups of $F$, and put $G:=G_a(K)\times G_m(K)$. By $G(\Oo)$ we shall mean the group of $\Oo$-rational points of $G$.  Also let $\pi:G(\Oo)\to G(k)$ be the residue map. Observe, using strong minimality of $k$, that 

(1) any infinite definable subgroup of $G(k)$ contains 
$G_a(k)\times \{1\}$ or $\{0\}\times G_m(k)$. 

\noindent
Indeed, otherwise there would be an isogeny $G_a(k)\to G_m(k)$, which is impossible (consider torsion in the two groups).

For $\alpha\in G(k)$, put $G_\alpha:=\{g\in G(\Oo): \pi(g)=\alpha\}$. For $i=1,2$ let $\pi_i$ denote the projection from $G(\Oo)$ to the $i^{{\rm th}}$ coordinate.
Let $E<G$ denote the graph of the exponential map $\exp:(\mf{M},+)\to (1+\mf{M},.)$. We aim to show that a generic coset of $E$ in $G(\Oo)$ (an affine homomorphism of torsors  with homogeneous component $\exp$, whose
 graph is a subset of 
  $G_\alpha$ for some generic $\alpha\in G(k)$), 
is not coded in $\G$.

As in the first proof of Theorem~\ref{theorem1}, we shall work over a small 
 elementary submodel $M$ of $\Uu$. Fix $\alpha \in G(k)$ and choose $C'\subset K$ such 
that $M\subset C'$, $\alpha\in  \dcl(C')$, and some $g\in G_\alpha$ lies in 
$\dcl(C')$. Thus, $ gE\in \dcl(C')$, and is the graph of an affine homomorphism,  denoted $g'$, from $\pi_1(G_\alpha)$ to $\pi_2(G_\alpha)$. (For ease of notation we do not
 distinguish between $gE$ and $\lceil gE \rceil$.) For $g_1,g_2\in G_\alpha$, write $g_1\sim g_2$ if $g_1,g_2$ determine the same affine homomorphism, that is, $g_1E=g_2E$. As in the proof of Claim 1(i) 
in the proof of Theorem~\ref{theorem1}, for any other affine homomorphism $h':\pi_1(G_\alpha)\to \pi_2(G_\alpha)$ with homogeneous component $\exp$, there is $d\in \mf{M}$ such that $g'(x)=h'(x)\exp(d)$ for all $x\in \pi_1(G_\alpha)$. Hence,

(2)  there is a $C'$-definable injection $j: (G_\alpha/\sim) \rightarrow K$.

If $h\in G_\alpha$ and $hE\in \acl(M,k)$ then, using elimination of finite imaginaries in ACF (applied to $K$), $j$ yields a definable map from a power of $k$ to 
$K$. Hence, by Lemma~\ref{finiteimage}, we have

(3) If $h\in G_\alpha$ and $hE\in \acl(M,k)$, then the image under $j$ of $\tp(hE/M,\alpha)$ is finite.

\begin{lemma}\label{rank0} Let $A=\acl(A)\supseteq M$ be any base set, let $\alpha \in G(k)$ and 
$h\in G_\alpha$, and assume that the coset $hE$ is $\acl(A\cup\{\alpha\})$-definable. Then $\alpha \in A$.
\end{lemma}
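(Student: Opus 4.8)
The plan is to argue by contradiction: suppose $\alpha\notin A$, and derive a contradiction with the strong minimality of $k$ (together with the orthogonality and finiteness results of Section~2). Since $\alpha\notin A=\acl(A)$ and $k$ is strongly minimal by Lemma~\ref{stablyembedded}, the element $\alpha$ realises the generic type of $G(k)$ over $A$ (after fixing which coordinate(s) of $\alpha$ are non-algebraic; I will treat the typical case where both coordinates of $\alpha$ are generic, the others being easier). I would first enlarge $A$ to a base $C'$ as in the discussion preceding the lemma: choose $C'\supseteq A$ with $\alpha\in\dcl(C')$ and some $g\in G_\alpha$ with $g\in\dcl(C')$, arranged (using an automorphism over $A\cup\{\alpha\}$ fixing $hE$, or rather moving $hE$ off $\acl$ of the relevant set) so that $hE\notin\acl(C')$ while still $hE$ is $\acl(C'\cup\{\alpha\})=\acl(C')$-definable — wait, this needs care, so instead I keep $A$ and use (2) directly.

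The cleaner route: by (2) there is a $C'$-definable injection $j:(G_\alpha/\!\sim)\to K$, and $hE$ is a point of $G_\alpha/\!\sim$. The key observation is that if $\alpha$ is generic in $G(k)$ over $A$ (hence over $M$), then by the $C$-minimality analysis of Section~2 all elements of $G_\alpha$ — equivalently all cosets $hE$ with $h\in G_\alpha$ — realise the same type over $A\cup\{\alpha\}$; indeed $\pi_1(G_\alpha)$ and $\pi_2(G_\alpha)$ are reductions of closed balls (additive and multiplicative cosets of $\mf M$), and by $C$-minimality arguments all pairs $(a,b)$ with $a\in\pi_1(G_\alpha),\ b\in\pi_2(G_\alpha)$ have the same type over $A\alpha$, so all the affine homomorphisms $h_{a,b}$ (equivalently all cosets) are conjugate over $A\alpha$. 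Therefore $hE\notin\acl(A\alpha)$ unless $G_\alpha/\!\sim$ is finite, which it is not. But by hypothesis $hE$ \emph{is} $\acl(A\alpha)$-definable — contradiction. So in fact once we know all cosets have the same type over $A\alpha$ and there are infinitely many, we are done.

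Thus the real content is establishing that, when $\alpha\notin A$, the set $G_\alpha$ contains no $\acl(A\alpha)$-algebraic point and all its cosets of $E$ realise a single type over $A\alpha$. The first part is the remark before Lemma~\ref{Pmin}-style observation: a generic additive (resp.\ multiplicative) coset of $\mf M$ over $A$ has no $\acl(A)$-point, because such a point would give a definable function from the strongly minimal $k$ (via $\alpha$) with infinite fibres contradicting exchange, or more directly it is Proposition~\ref{uniquetype}. The second part — uniqueness of the type of the coset, not just of individual points — follows because $\lceil hE\rceil\in\dcl(a,b)$ for $a=\pi_1(h),\ b=\pi_2(h)$, and the pair $(a,b)$ realises a unique type over $A\alpha$ by the $C$-minimality/Proposition~\ref{uniquetype} argument applied in each coordinate together with orthogonality of the two reductions; since there are infinitely many cosets (by (2), as $K$ is infinite and $j$ injective with infinite domain $G_\alpha/\!\sim$, which has the same cardinality as $\mf M$), this type cannot be $\acl(A\alpha)$-definable. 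I expect the main obstacle to be the bookkeeping of which coordinate(s) of $\alpha$ are generic over $A$ and handling the degenerate cases (one coordinate algebraic), where one must fall back on orthogonality (Proposition~\ref{orthog}) and the observation (1) that the relevant one-dimensional pieces still behave like generic cosets; but in each case the same mechanism — single type over $A\alpha$ for infinitely many cosets — yields the contradiction, forcing $\alpha\in A$.
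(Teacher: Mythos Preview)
Your approach has a genuine gap. The opening assertion that ``$\alpha\notin A=\acl(A)$ implies $\alpha$ realises the generic type of $G(k)$ over $A$'' is false: $G(k)=k\times k^*$ has Morley rank~2, so $\alpha\notin A$ only gives $\RM(\alpha/A)\geq 1$. Your subsequent case division --- ``both coordinates generic'' versus ``one coordinate algebraic'' --- therefore omits the case $\RM(\alpha/A)=1$ in which \emph{both} coordinates $\alpha_1,\alpha_2$ lie outside $A$ but are interalgebraic over $A$. This is not a degenerate bookkeeping case; it is the heart of the lemma.

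In that missing case your ``single type'' mechanism breaks down. Take for concreteness $\alpha_2=\alpha_1$ (so $\alpha$ lies on the diagonal, certainly rank~1). Then $\pi_1(G_\alpha)=\pi_2(G_\alpha)=A_{\alpha_1}$, and the pairs $(a,a)$ and $(a,a')$ with $a'\in A_{\alpha_1}$ generic over $Aa$ lie in $G_\alpha$ but visibly have different types over $A\alpha$ (one satisfies $x=y$, the other does not). More to the point, once you fix $a\in A_{\alpha_1}$ you have $\alpha_2\in\acl(Aa)$, so Proposition~\ref{uniquetype}(ii) no longer applies to the second coordinate over $\acl(Aa)$, and $A_{\alpha_2}$ acquires $\acl(Aa)$-points. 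Thus the cosets of $E$ in $G_\alpha$ do not all realise a single type over $A\alpha$, and you cannot conclude that none is algebraic. Observation~(1) and Proposition~\ref{orthog} do not help here: orthogonality is between $k$ and $\Gamma$, not internal to $k$, and (1) concerns definable subgroups, which only enter once one has set up a stabiliser.

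The paper's proof handles exactly this rank-1 case by a stable-group-theoretic argument: it first shows (Claim~1) that $\pi(P)$ avoids pairs with one coordinate generic over the other --- this is where the composition-of-affine-maps trick lives --- then uses products $\gamma_1\gamma_2^{-1}$ (Claim~3) and the Zilber stabiliser of $\pi(P)$ in $G(k)$ to force $\pi(P)$ to be the generic type of a coset of $G_a(k)\times\{1\}$ or $\{0\}\times G_m(k)$, which contradicts Claim~1. Your proposal contains no analogue of this step.
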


\begin{proof} Let $P$ be the set of realisations in $\Uu$ of $\tp(h/A)$, and put $\pi(P):=\{\pi(h): h\in P\}$. Thus $\pi(P)$ is the set of realisations of a type in $G(k)$ over $A$, so has Morley rank 0,1, or 2. It suffices to rule out the last two cases.

{\em Claim 1.} (i) If $\gamma\in k^*$ and $\beta\in G_a(k)$ is generic over $A\cup\{\gamma\}$ then $(\beta,\gamma)\not\in \pi(P)$.

(ii) If $\beta\in k$ and $\gamma\in G_m(k)$ is generic over $A\cup\{\beta\}$ then $(\beta,\gamma)\not\in \pi(P)$.

{\em Proof of Claim.} We prove (i) and omit the similar proof of  (ii). So suppose $\gamma\in k^*$, and $\beta_1,\beta_2\in G_a(k)$ are  generic and independent over $A\cup\{\gamma\}$. If (i) is false then there are $(b_1,c), (b_2,c)\in P$ with $(\beta_i,\gamma)=\pi(b_i,c)$  for $i=1,2$. By the assumption in the lemma, $(b_i,c)E$ is $\acl(A\cup \{\beta_i,\gamma\})$-definable for each $i$. Thus, working over $\acl(A,\beta_1,\beta_2,\gamma)$, there are definable affine homomorphisms $\beta_i+\mf{M} \to c(1+\mf{M})$ with homogeneous component $\exp$. Composing one with the inverse of the other, we have an $\acl(A,\beta_1,\beta_2,\gamma)$-definable bijection
between two cosets of $\mf{M}$ in $\calO$ which are generic over $A$. However, there is no such bijection: indeed, the product of these two cosets realises a unique type in $VF\times VF$ over $A \cup k$.

{\em Claim 2.} ${\rm RM}(\alpha/A)\leq 1$.

{\em Proof of Claim.} Suppose  ${\rm RM}(\alpha/A)=2$. Then $\pi(P)$ is a generic type of $G(k)$, which contradicts Claim 1.

{\em Claim 3.} Suppose $\gamma_1,\gamma_2\in \pi(P)$ and $\gamma_3=\gamma_1\gamma_2^{-1}$. Then ${\rm RM}(\gamma_3/A)\leq 1$.

{\em Proof of Claim.} Let $\gamma_i=\pi(g_i)$ with $g_i\in P$ for $i=1,2$. Put $g_3:=g_1g_2^{-1}$. By the assumption of the lemma, $g_iE$ and hence $g_i^{-1}E$ are $\acl(A,\gamma_i)$-definable for each $i$. Thus, $g_3E=g_1g_2^{-1}E$ is $\acl(A,\gamma_1,\gamma_2)$-definable. Hence $g_3E$ is $\acl(A,\gamma_3)$-definable by (3) above. Since this was the assumption (on $h,\alpha$) which yielded Claim 2, it follows by Claim 2 that $\tp(\gamma_3/A)$ has Morley rank at most 1.

\smallskip

Suppose now for a contradiction that ${\rm RM}(\alpha)={\rm RM}(\pi(P))=1$. 
Let $S$ be the Zilber stabiliser of $\pi(P)$ in the Morley rank 2 group $G(k)$. 
That is, if $p$ is the global non-forking extension (over $\Uu$) of 
the stationary type $\tp(\pi(h)/A)$, then $S=\{g\in G(k):g(p)=p\}$.  Then 
by $\omega$-stability of $k$,
  $S$ is a definable subgroup of $G(k)$.  
Also, by \cite[Lemme 2.3]{poizat}, ${\rm RM}(S)\leq {\rm RM}(p)$, and if there is equality then $S$ is connected and $p$ is a translate of the generic type of $S$. 
Let $g_1,g_2\models p$ with $g_1 \dnf_{\Uu} g_2$, and put $g_3:=g_1g_2^{-1}$. Then
$$1=\RM(g_2^{-1}/A,g_1)=\RM(g_1g_2^{-1}/A,g_1)=RM(g_3/A,g_1)\leq \RM(g_3/A)=1$$
(by Claim 3), so $g_3\dnf_{\Uu} g_1$, and as $g_3^{-1}g_1=g_2$, $g_3\in S$. 
Thus, $\RM(S)=\RM(p)=1$, and so  $p$  is the generic type  of a coset of $S$ in $G(k)$. 
By (1) above, and as $S$ is connected, $S=(\{0\}\times G_m)(k)$ or $S=(G_a\times\{1\})(k)$. Either of these gives a contradiction to Claim 1, so
 yields the lemma. 
\end{proof}

\medskip

{\em Proof of Theorem~\ref{theorem1}.} Let $A=\acl(A)\supseteq M$. Let $h\in G(\calO)$ such that ${\rm RM}(\pi(h)/A)=2$. We argue by contradiction, so suppose that $\lceil hE\rceil$ is coded in the geometric sorts. Now as in Claim 1 in our first proof of Theorem~\ref{theorem1},
there is $t\in K$ such that $hE\in \acl(A,\pi(h),t)$. By Lemma~\ref{rank0} applied over $A(t)$, it follows that $\pi(h)\in \acl(A,t)$. However, since ${\rm RM}(\pi(h)/A)=2$ and $t$ is a single field element, 
it follows easily from $C$-minimality arguments (see e.g. Section 3 of \cite{HK}) that
${\rm RM}(\pi(h)/\acl(A,t))\geq 1$. This  gives a contradiction. $\Box$

\begin{remark} \rm \label{final}
We have shown that the elements of the interpretable set $G(\calO)/E$ are not coded over any parameter set $A\supset M$, in the sense that for any such $A$, there is no $A$-definable injection from $G(\calO)/E$ to any product of geometric sorts. It follows that for any definable group $F>G(\calO)$ and any $b\in F$,  the elements of $bG/E$ are not coded over any set; for if the elements of $bG$ were coded over $A_b$, then the elements of $G/E$ would be coded over
$A_b \cup\{b\}$.

\end{remark}


\begin{thebibliography}{DvdD}

\bibitem{BP} Y. Baisalov, B. Poizat, 
	\emph{Paires de structures o-minimales}, 
	J. Symb. Logic \textbf{63} (1998), pp.~570--578.


\bibitem{celikler} Y.F. Celikler, 
	\emph{Dimension theory and parametrised normalization for $D$-semianalytic sets over non-archimedean fields}, 
	J. Symb. Logic \textbf{70} (2005), pp.~593--618.

\bibitem{CD}
 G. Cherlin, M.A. Dickmann, \emph{Real closed rings II: model theory},
Ann. Pure Appl. Logic, \textbf{25} (1983) pp.~213--231.

\bibitem{cl0}
R. Cluckers, {\em Presburger sets and $P$-minimal fields}, J. Symb. Logic \textbf{68} (2003), pp.~153--162.





\bibitem{CL}
	R. Cluckers and L. Lipshitz, \emph{Fields with analytic structure}, J. Eur. Math. Soc. \textbf{13} (2011), pp.~1147--1223.
	
\bibitem{DvdD}
	J. Denef and L. van den Dries, \emph{$p$-adic and real subanalytic sets},
	Annals of Mathematics \textbf{128} (1988), pp.~79--138.

\bibitem{Di} M.A. Dickmann, 
	\emph{Elimination of quantifiers for ordered valuation rings}, 
	J. Symbolic Logic, \textbf{52} (1987), pp.~ 116--128.

\bibitem{Dries} L. van den Dries, 
	\emph{Tame topology and o-minimal structures}, 
	London Math. Soc. Lecture Notes No. 248, 1998, Cambridge University Press, Cambridge.
	
\bibitem{D}
   	 L. van den Dries, \emph{$T$-convexity and tame extensions II},	
 	   J. Symb. Logic \textbf{62} (1997), pp.~14--34.
	
\bibitem{DHM}
	L. van den Dries, D. Haskell and H. D. Macpherson, 
	\emph{One-dimensional $p$-adic subanalytic sets},
	J. London Math. Soc. (2) \textbf{59} (1999), pp.~1--20. 
		
\bibitem{DL}
	L. van den Dries and A. Lewenberg, \emph{$T$-convexity and tame extensions}, 
	J. Symbolic Logic \textbf{60} (1995), pp.~74--102.
	
\bibitem{HM1} 
    D. Haskell and H.D. Macpherson,
    \emph{Cell decompositions of $C$-minimal structures},
    Ann. Pure Appl. Logic \textbf{66} (1994), pp.~113--162. 

\bibitem{HM2} D. Haskell and H.D. Macpherson, 
	\emph{A version of o-minimality for the $p$-adics},	J
	. Symb. Logic \textbf{62} (1997), pp.~1075--1092.

\bibitem{HHM}
	D. Haskell, E. Hrushovski and H. D. Macpherson,
	\emph{Definable sets in algebraically closed valued fields: elimination of imaginaries}, 
	J. Reine Angew. Math. \textbf{597} (2006), pp.~175--236.
	
\bibitem{ho}
      A. Hasson and A. Onshuus,
      \emph{Embedded o-minimal structures},
      Bull. London Math. Soc. \textbf{42} (2010), pp.~64--74.

\bibitem{holly} J.E. Holly, \emph{Canonical forms for definable subsets of algebraically closed and real closed valued fields}, J. Symb. Logic \textbf{60} (1995), pp.~843--860.	
	
\bibitem{HK}
    E. Hrushovski and D. Kazhdan,
    \emph{Integration in valued fields},
    Algebraic geometry and number theory: in honour of Vladimir Drinfeld's 50th birthday (V. Ginzburg, editor), 
    Progress in Math. vol. 253, Birkhauser, 2006, pp.~261--405. 	

\bibitem{HM}
	E. Hrushovski and B. Martin, 
	\emph{Zeta functions from definable equivalence relations}, arXiv:math/0701011.
	

\bibitem{L}
	L. Lipshitz, \emph{Rigid subanalytic sets}, Amer. J. Math. \textbf{115} (1993), pp.~77--108.
	
\bibitem{LR}
	L. Lipshitz and Z. Robinson,
	\emph{One-dimensional fibers of rigid subanalytic sets}, 
	J. Symbolic Logic \textbf{63} (1998), pp.~83--88.

\bibitem{MS}
    H.D. Macpherson and C. Steinhorn,
    \emph{On variants of o-minimality},
    Ann. Pure Appl. Logic \textbf{79} (1996), pp.~165--209.	
	
\bibitem{M}
	T. Mellor, \emph{Imaginaries in real closed valued fields},
	Ann. Pure Appl. Logic \textbf{139} (2006), pp.~230--279.
	
\bibitem{MILLER} C. Miller, \emph{A growth dichotomy for o-minimal expansions of ordered fields}, in
	\emph{Logic: from foundations to applications (European Logic Colloquium 1993)} (Eds. W. Hodges, 
	J.M. Hyland, C. Steinhorn, J.K. Truss), Oxford University Press, 1996, pp.~385--399. 
	
\bibitem{pill}
	A. Pillay, \emph{Stable embeddedness and NIP},
	J. Symbolic Logic \textbf{76} (2011), pp.~665--672.	
	
\bibitem{poizat} 
	B. Poizat,
	\emph{ Groupes stables}, Nur al-Mantiq wal-Ma'rifah, Villeurbanne, 1987 (in English, \emph{Stable groups}, American Mathematical Society, Providence, Rhode Island, 2001).
	
\bibitem{S} 
	M. du Sautoy, \emph{Finitely generated groups, p-adic analytic groups and Poincar\'e series},
	Ann. of Math. (2) \textbf{137} (1993), no. 3, pp.~639--670. 

%\bibitem{vdD1}
%	L. van den Dries, \emph{The field of reals with a predicate for the powers of two},
%	Manuscripta math \textbf{54} (1985), pp~187--195.
%\bibitem{vdD2} 
%	L. van den Dries, \emph{Classical model theory of fields}, Model theory, algebra and geometry 
%	(eds D.~Haskell, A.~Pillay, C.~Steinhorn), MSRI publications 39, Cambridge University Press (2000), pp~37--52.



\end{thebibliography}
\end{document}